\numberwithin{equation}{section}
\definecolor{citegreen}{rgb}{0,0.4,0}
\definecolor{refred}{rgb}{0.5,0,0}
\theoremstyle{plain}
\newtheorem {theorem}{Theorem}[section]
\newtheorem {lemma}[theorem]{Lemma}
\newtheorem {proposition} [theorem]{Proposition}
\newtheorem {corollary} [theorem]{Corollary}
\newtheorem{definition}[theorem]{Definition}
\newtheorem{remark}[theorem]{Remark}
\theoremstyle{remark}
\DeclarePairedDelimiter\abs{\lvert}{\rvert}
\DeclarePairedDelimiter\bigabs{\Big\lvert}{\Big\rvert}
\newcommand{\R}{\mathbb R}
\newcommand{\N}{\mathbb N}
\renewcommand{\theta}{\vartheta}
\newcommand{\crit}{{\rm Crit}}
\newcommand{\barint}
{\rule[.036in]{.12in}{.009in}\kern-.16in \displaystyle\int}
\newcommand{\dive}{{\mathrm{div}}}
\newcommand{\ep}{\varepsilon}
\newcommand{\pa}{\partial }
\newcommand{\numberset}{\mathbb}
\renewcommand{\N}{\numberset{N}}
\renewcommand{\R}{\numberset{R}}
\newcommand{\Sf}{\numberset{S}}
\newcommand{\ric}{\mathop {\rm Ric}\nolimits}
\newcommand{\capa}{{\rm Cap}}
\newcommand{\D}{{\rm D}}
\newcommand{\dd}{{\,\rm d}}
\newcommand{\HH}{{\rm H}}
\newcommand{\hh}{{\rm h}}
\newcommand{\RR}{{\rm R}}
\newcommand{\na}{\nabla}
\newcommand{\Om}{\Omega}
\renewcommand{\phi}{\varphi}
\renewcommand{\epsilon}{\varepsilon}
\title{Minkowski Inequalities via Nonlinear Potential Theory}
\author[V.~Agostiniani]{Virginia Agostiniani}
\address{V.~Agostiniani, Universit\`a degli Studi di Verona,
strada le Grazie 15, 37134 Verona, Italy}
\email{virginia.agostiniani@univr.it}
\author[M.~Fogagnolo]{Mattia Fogagnolo}
\address{M.~Fogagnolo, Universit\`a degli Studi di Trento,
via Sommarive 14, 38123 Povo (TN), Italy}
\email{mattia.fogagnolo@unitn.it}
\author[L.~Mazzieri]{Lorenzo Mazzieri}
\address{L.~Mazzieri, Universit\`a degli Studi di Trento,
via Sommarive 14, 38123 Povo (TN), Italy}
\email{lorenzo.mazzieri@unitn.it}
\begin{document}

\maketitle

%%%%%%%%%%%%%%%%%%%%%%%%%%%%%%%%%%%%%%%%%%%%%%%%%%

\begin{abstract}
In this paper, we prove an extended version of the Minkowski Inequality, holding for any smooth bounded set $\Omega \subset \R^n$, $n\geq 3$. 
Our proof relies on the discovery of {\em effective monotonicity formulas} holding along the level set flow of the $p$-capacitary potentials associated with $\Om$, for every $p$ sufficiently close to $1$. Besides constituting a neat improvement of
those introduced in~\cite{Fog_Maz_Pin}
to treat the case of convex domains,
these formulas
testify the existence of a link between the monotonicity formulas derived by Colding and Minicozzi for the level set flow of Green's functions and the monotonicity formulas employed by Huisken, Ilmanen and several other authors in studying the geometric implications of the Inverse Mean Curvature Flow. 
In dimension $n\geq 8$, our conclusions are stronger than the ones obtained so far through the latter mentioned technique.
\end{abstract}

% appunto alcune MSC tra cui scegliere
% (e forse altre andrebbero aggiunte)
% 35B06 (PDE - symmetries, invariants of pdes)
% 53C21 (methods of Riem Geom (including pdes method))
% 39B62 (Functional inequalities, including subadditivity, convexity...)  
% 53C44 (Geometric evolution equations (mean curvature flow, Ricci flow, etc.))
% 35N25 (overdetermined bvp)
% 31C12 (Potential theory on Riemannian manifolds)
% 31C15 (Potentials and capacities)
% 49Q10 (Optimization of shapes other than minimal surfaces)

\bigskip
\noindent\textsc{MSC (2010): 
31C15, 53C44, 53C21, 35B06, 49Q10, 39B62. 
%49Q10 	Optimization of shapes other than minimal surfaces
%39B62  Functional inequalities, including subadditivity, convexity, etc. 
}

\smallskip
\noindent{\underline{Keywords}:  
 geometric inequalities, 
 nonlinear potential theory, 
 inverse mean curvature flow.}

%%%%%%%%%%%%%%%%%%%%%%%%%%%%%%%%%%%%%%%%%%%%%%%%%%

%%%%%%%%%%%%%%%%%%%%%%%%%%%%%%%%%%%%%%%%%%%%%%%%%%
%%%%%%%%%%%%%%%%%%%%%%%%%%%%%%%%%%%%%%%%%%%%%%%%%%

\section{Introduction and statements of the main results}

%%%%%%%%%%%%%%%%%%%%%%%%%%%%%%%%%%%%%%%%%%%%%%%%%%
%%%%%%%%%%%%%%%%%%%%%%%%%%%%%%%%%%%%%%%%%%%%%%%%%%

A classical result in the theory of convex hypersurfaces in Euclidean spaces is the so called Minkowski inequality~\cite{Min}, which says that if $\Omega \Subset \R^n$,
$n\geq3$,
is a convex domain with smooth boundary and $\HH$ is the mean curvature of $\pa \Om$ computed with respect to the outward
unit normal, then 
\begin{equation}
\label{eq:mink}
\left(\frac{|\Sf^{n-1}|}{|\pa \Omega|} \right)^{\!\!1/(n-1)} \!\!\!\!\! \leq \,\,\,\,\, \fint\limits_{\pa \Omega}\frac{\HH}{n-1}  \,\, \dd \sigma \, ,
\end{equation}
with equality if and only if $\Omega$ is a ball. In other words, the inverse of the surface radius is a sharp lower bound for the averaged total mean curvature of $\pa \Om$. Observe that the above inequality can be conveniently rephrased as
\begin{equation}
\label{eq:mink2}
\left(\frac{|\pa \Omega|}{|\Sf^{n-1}|}\right)^{\!\!\frac{n-2}{n-1}} \!\!\! \leq \, \frac {1}{|\Sf^{n-1}|} \,\, \int\limits_{ \pa \Omega}   \frac{\HH}{n-1}  \, \dd \sigma \, ,
%(n-1) \, |\Sf^{n-1}|^{\frac{1}{n-1}} \,\, \leq \,   |\pa \Omega|^{-\frac{n-2}{n-1}}\int\limits_{ \pa \Omega}  {\HH}  \,\, \dd \sigma \, ,
\end{equation}
so that it can be combined with the standard Isoperimetric Inequality to deduce its volumetric version, also known in the literature as a higher order Isoperimetric Inequality (see~\cite{Chang_Wang_2013} and~\cite{qiu}) 
\begin{equation}
\label{eq:minkvol}
\left(\frac{| \Omega|}{|\mathbb{B}^{n}|}\right)^{\!\!\!\frac{n-2}{n}} \!\!\! \leq \, \frac {1}{|\Sf^{n-1}|} \,\, \int\limits_{ \pa \Omega}   \frac{\HH}{n-1}  \, \dd \sigma \, ,
\end{equation}
at least when $\Omega$ varies in the class of convex domains. It is worth recalling that both the Isoperimetric Inequality and the Minkowski Inequality are part of a family of inequalities involving quermassintegrals that were originally deduced in the context of convex analysis from the classical Aleksandrov-Fenchel mixed volume inequalities~\cite{Ale_1937, Ale_1938,fenchel}. A natural question, raised by several authors (see~\cite{Tru,Hui_video, Chang_Wang_2011,Chang_Wang_2013}), is whether the Minkowski Inequality~\eqref{eq:mink2} as well as its volumetric version~\eqref{eq:minkvol} hold true for larger classes of domains than just for the convex one.

\medskip

Positive answers to these questions have been proposed so far using the Inverse Mean Curvature Flow (IMCF from now on) and methods based on Optimal Transport. 
Our main concern in this paper is to propose an alternative technology based on Nonlinear Potential Theory, which is powerful enough to recover, improve and extend all the so far known results on these topics. Surprisingly, this new approach provides simplified arguments, which are also very flexible and likely to be exportable to several interesting frameworks, such as complete
manifolds with nonnegative Ricci curvature and asymptotically flat manifolds with nonnegative scalar curvature.
In Section~\ref{sec:method}, we will describe in more details the main features of this approach, drawing a systematic comparison with the existing curvature flow techniques. Here, we just anticipate that the cornerstone of our method is the discovery of {\em effective monotonicity formulas} (see Theorem~\ref{monotone1} and Theorem~\ref{monotone2}), holding along the level sets of the $p$-capacitary potential $u_p : \R^n \setminus \overline{\Om} \rightarrow \R$ associated with $\Omega$. Besides their geometric implications, these formulas have a technical relevance on their own, as they persist through all the possible singularities of the flow. It is worth noticing that, in the present framework, the flow singularities correspond to the critical points of $u_p$, and these might in principle be arranged in sets of full measure. This means that, albeit the level set flow is possibly subject to jumps, our monotonicity formulas are strong enough to survive them. Finally, from a theoretical point of view, these formulas can be seen as the crucial step towards the completion of a program initiated in the series of works~\cite{Ago_Maz_3,Ago_Maz_2, Ago_Fog_Maz, Fog_Maz_Pin}
and intended to link the monotonicity formulas employed by Huisken, Ilmanen and other authors in studying the geometric implications of the IMCF (see e.g.,~\cite{Hui_video, Hui_Ilm, Guan_Li, freire, wei_kottler, Wei,mccormick,Brendle,ge-hyperbolic1,ge-hyperbolic2,ge-kottler,deLima-hyperbolic,Bra_Mia,Bra_Nev}
to the monotonicity formulas discovered by Colding and Minicozzi in~\cite{Colding_1,Colding_Minicozzi, Colding_Minicozzi_2} for the level set flow of the Green's functions on complete manifolds with nonnegative Ricci curvature.
%From a theoretical point of view, these formulas testify the existence of a link between the well known monotonicity formulas for the IMCF (see for example the one reported in~\eqref{eq:mon}, but also the Geroch monotonicity of the Hawking Mass) and the monotonicity formulas derived by Colding and Minicozzi~\cite{Colding_1,Colding_Minicozzi,Colding_Minicozzi_2} for the level sets flow of the Green's functions in complete manifolds with nonnegative Ricci curvature. 
In fact, as explained in Subsection~\ref{sub:level}, the first ones can be recovered from ours in the limit as $p\to1^+$, whereas the latter can be reconstructed setting $p=2$ and letting $\Omega$ shrink to a single point (see the Appendix of~\cite{Ago_Fog_Maz}).

\medskip

We pass now to describe the main geometric inequalities obtained in this paper.
The first one is an extension of the Minkowski Inequality, holding for every bounded and smooth subset of $\R^n$, in which the total mean curvature of the boundary is replaced by the $L^1$-norm of the mean curvature, whereas the perimeter of the set $\Omega$ is replaced by the one of its {\em strictly outward minimising hull} $\Omega^*$, which is defined in Definition~\ref{smh-def} below in accordance to~\cite[pp. 371--372]{Hui_Ilm}. For the reader's convenience we briefly recall that a set is called {\em outward minimising} if it minimises the perimeter among all the sets containing it; moreover, an outward minimising set is called {\em strictly outward minimising} if it coincides almost everywhere with any outward minimising set containing it and having the same perimeter. 
Loosely speaking, $\Om^*$ is -- up to negligible components -- the smallest strictly outward minimising set that contains $\Omega$ (see Definitions~\ref{def:out} and~\ref{smh-def} in Section~\ref{sec:min} for more details). With these concepts at hand, our first main result reads as follows.

\begin{theorem}[Extended Minkowski Inequality]
\label{minkowski}
If $\Omega \subset \R^n$ is a bounded open set with smooth boundary, then
\begin{equation}
\label{minkowskif}
\left(\frac{|\pa \Omega^*|}{|\Sf^{n-1}|}\right)^{\!\!\frac{n-2}{n-1}} \!\!\! \leq \, \frac {1}{|\Sf^{n-1}|} \,\, \int\limits_{ \pa \Omega}   \left|\frac{\HH}{n-1}\right|  \, \dd \sigma \, ,
\end{equation}
where $\Om^*$ is the strictly outward minimising hull of $\Om$ defined as in Definition~\ref{smh-def}. Moreover, the dimensional constants appearing here
are optimal, in the sense that 
\begin{equation}
\min \left\{  \left. |\pa \Om^*|^{-\frac{n-2}{n-1}}\! \int\limits_{\pa\Om} \! |\HH|  \dd \sigma  \,\, \right| \,\, \Omega \Subset \R^n \, , \,\, \hbox{with $\pa \Om$ smooth}    \right\}  \, = \,\, (n-1) \, \abs{\Sf^{n-1}}^{\frac{1}{n-1}} \, ,
\end{equation}
and the minimum is achieved on spheres.
\end{theorem}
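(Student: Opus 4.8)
The plan is to obtain \eqref{minkowskif} from the effective monotonicity formulas of Theorems~\ref{monotone1} and~\ref{monotone2}, run along the level sets of the $p$-capacitary potential $u_p \colon \R^n \setminus \overline{\Om} \to \R$ of $\Om$ --- for $p$ in the interval $(1, p_0(n))$ on which those formulas apply --- followed by the limit $p \to 1^+$. Recall that, for $1 < p < n$ and $\Om$ bounded with smooth boundary, $u_p$ is the unique solution of $\dive(|\na u_p|^{p-2}\na u_p) = 0$ in $\R^n \setminus \overline{\Om}$ with $u_p = 1$ on $\pa\Om$ and $u_p(x) \to 0$ as $|x| \to \infty$; by elliptic regularity and the Hopf lemma it is smooth, with $\na u_p \neq 0$, near $\pa\Om$, so $\pa\Om = \{u_p = 1\}$ is a regular level set, whereas for $t \in (0,1)$ the level set $\{u_p = t\}$ may be singular along the critical set of $u_p$, which a priori need not be negligible. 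The point of the word \emph{effective} is that the monotone quantity $F_p(t)$ furnished by Theorem~\ref{monotone2} --- which on a regular level set has the shape of an explicit power of $t$ times $\int_{\{u_p = t\}} |\na u_p|^{q(p)}\, \HH\, \dd\sigma$, with the mean curvature $\HH$ of $\{u_p = t\}$ entering \emph{linearly and with its sign}, with weight exponent $q(p) \to 0^+$ as $p \to 1^+$, and with the $t$-power normalised so that $F_p$ is constant along the flow issuing from a round ball --- stays monotone across those singular values of $t$. The proof then reduces to computing the two endpoint values of $F_p$ on $(0,1)$.

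The value $F_p(1)$ is read off from the smoothness of $u_p$ near $\pa\Om$, and equals $\gamma_1(n,p)\int_{\pa\Om} |\na u_p|^{q(p)}\, \HH\, \dd\sigma$ (possibly modified by an explicit multiple of $\capa_p(\Om)^{\tau(p)}$). The limit $\lim_{t \to 0^+} F_p(t)$ is instead controlled by the behaviour of $u_p$ at infinity: $u_p$ is asymptotic to the fundamental solution of the $p$-Laplacian and --- this is the quantitative ingredient I would rely on --- its level sets $\{u_p = t\}$ become round as $t \to 0^+$, with matching control on $\na u_p$ and on the second fundamental form of $\{u_p = t\}$, whence $\lim_{t \to 0^+} F_p(t) = \gamma_2(n,p)\, \capa_p(\Om)^{\tau(p)}$ for an explicit constant $\gamma_2(n,p)$ and an exponent $\tau(p) \to (n-2)/(n-1)$ as $p \to 1^+$. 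Since the monotonicity of $F_p$ is in the direction for which $F_p(1) \geq \lim_{t \to 0^+} F_p(t)$ (as it must be, both quantities coinciding on balls, where $F_p$ is constant), rearranging and absorbing the $\capa_p(\Om)$-term produces the one-parameter family of $L^1$-type $p$-Minkowski inequalities
\begin{equation*}
\int_{\pa\Om} |\na u_p|^{q(p)}\, \HH\, \dd\sigma \;\; \geq \;\; \gamma(n,p)\, \capa_p(\Om)^{\tau(p)}, \qquad p \in (1, p_0(n)),
\end{equation*}
with $\gamma(n,p)$ an explicit dimensional constant.

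The last step is the limit $p \to 1^+$. Estimating pointwise $\HH \leq |\HH|$ and then $|\na u_p|^{q(p)} \leq (\sup_{\pa\Om}|\na u_p|)^{q(p)}$, the left-hand side above is at most $(\sup_{\pa\Om}|\na u_p|)^{q(p)} \int_{\pa\Om}|\HH|\, \dd\sigma$; since a standard exterior/interior ball comparison bounds $\sup_{\pa\Om}|\na u_p|$ by a constant times $(p-1)^{-1}$ while $q(p) \to 0^+$ fast enough to offset this, the prefactor tends to $1$, so in the limit we are left precisely with $\int_{\pa\Om}|\HH|\, \dd\sigma$. On the right-hand side one uses the identification of the $1$-capacity with the perimeter of the strictly outward minimising hull, $\lim_{p \to 1^+}\capa_p(\Om) = |\pa\Om^*|$ (recall that $\Om^*$ is, up to null sets, the smallest strictly outward minimising set containing $\Om$, cf.\ Definition~\ref{smh-def}), together with $\tau(p) \to (n-2)/(n-1)$ and $\gamma(n,p) \to (n-1)\,|\Sf^{n-1}|^{1/(n-1)}$, reaching $\int_{\pa\Om}|\HH|\, \dd\sigma \geq (n-1)\,|\Sf^{n-1}|^{1/(n-1)}\,|\pa\Om^*|^{(n-2)/(n-1)}$, which is \eqref{minkowskif}. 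The optimality claim then follows at once: on a ball $B_\varrho$ one has $\Om^* = B_\varrho$, $|\pa\Om^*| = |\Sf^{n-1}|\,\varrho^{n-1}$ and $\int_{\pa B_\varrho}|\HH|\, \dd\sigma = (n-1)\,|\Sf^{n-1}|\,\varrho^{n-2}$, so the functional in the stated minimum equals $(n-1)\,|\Sf^{n-1}|^{1/(n-1)}$ there, while by \eqref{minkowskif} it is bounded below by this value on every admissible $\Om$; hence the minimum is $(n-1)\,|\Sf^{n-1}|^{1/(n-1)}$ and is attained on spheres.

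I expect the main obstacle to be the $t \to 0^+$ endpoint together with the subsequent passage $p \to 1^+$: one needs a genuinely quantitative, and uniform in $p$ near $1$, description of how fast the level sets of $u_p$ round up --- matching estimates on $u_p$, $\na u_p$ and the second fundamental form, rather than mere pointwise convergence to the fundamental solution --- and one must control, uniformly in $p$, the convergence $\capa_p(\Om) \to |\pa\Om^*|$ and the precise limiting values of the exponents $q(p)$, $\tau(p)$ and of the constant $\gamma(n,p)$. A secondary, structural point --- already designed into the effective formulation --- is that the critical set of $u_p$, which unlike in the smooth inverse mean curvature flow may carry positive Lebesgue measure, must not break either the monotonicity of $F_p$ or the two endpoint evaluations; the argument has to invoke Theorems~\ref{monotone1} and~\ref{monotone2} in exactly this form, valid across all the singular values of $t$.
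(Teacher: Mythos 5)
Your high--level architecture (effective monotonicity along level sets of the $p$--capacitary potential, persistence across critical points, then $p\to 1^+$ together with $\lim_{p\to 1^+}\capa_p(\Omega)=|\partial\Omega^*|$) matches the paper, but you have misread the shape of the monotone quantity and this generates a spurious and fragile convergence problem in your last step. The quantity that Theorems~\ref{monotone1}--\ref{monotone2} govern is $U_p(\tau)=\tau^{-\frac{n-1}{n-p}}\int_{\{u_p=\tau\}}|\D u_p|^p\,\dd\sigma$ (equivalently $\Phi_p(s)=\int_{\{\phi=s\}}|\nabla\phi|^p_g\,\dd\sigma_g$), with \emph{no} mean curvature in the integrand; the mean curvature enters only through $U_p'$, and what the paper actually extracts are \emph{two} separate effective inequalities, $U_p'(1)\geq 0$ and $\lim_{\tau\to 0^+}U_p(\tau)\leq U_p(1)$. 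The first, after a H\"older inequality with conjugate exponents $p/(p-1)$ and $p$ applied to $\int_{\partial\Omega}|\D u_p|^{p-1}\HH\,\dd\sigma$, yields $\int_{\partial\Omega}|\D u_p|^p\,\dd\sigma \leq \big(\tfrac{n-p}{p-1}\big)^p\int_{\partial\Omega}|\HH/(n-1)|^p\,\dd\sigma$, so that the $|\D u_p|$--dependence is \emph{entirely eliminated} from the curvature side. Combined with the second inequality and Lemma~\ref{asyu} one obtains, for each fixed $p\in(1,n)$, the $L^p$--Minkowski inequality $\capa_p(\Omega)^{\frac{n-p-1}{n-p}}\abs{\Sf^{n-1}}^{\frac{1}{n-p}}\leq\int_{\partial\Omega}|\HH/(n-1)|^p\,\dd\sigma$ of Theorem~\ref{premink}, which is an inequality between numbers with a right--hand side that passes to $p\to 1^+$ by trivial pointwise convergence.

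This is where your proposal opens a gap: because you posit a single monotone quantity with $\int|\D u_p|^{q(p)}\HH$ in the integrand, your final step requires pulling out $(\sup_{\partial\Omega}|\D u_p|)^{q(p)}$, controlling it via an interior--ball barrier as $(p-1)^{-1}$, and arguing that $q(p)\to 0^+$ cancels the blow--up, and in addition you worry about quantitative asymptotics of $u_p$ at $\tau\to 0^+$ that are uniform in $p$. None of this is needed: the endpoint computation at $\tau\to 0^+$ is done for each fixed $p$ via the standard expansion $u_p(x)\sim \capa_p(\Omega)^{1/(p-1)}|x|^{-(n-p)/(p-1)}$, and the subsequent limit $p\to 1^+$ is taken in the already--established numerical inequality, where the only nontrivial ingredient is Theorem~\ref{limit-pcapth}. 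You would do well to replace your hypothetical $F_p$ with the actual $U_p$, establish the two effective inequalities separately, and insert the H\"older step; then the $p\to 1^+$ limit becomes essentially automatic and your secondary concerns dissolve. Your verification of the equality case on balls and the resulting optimality statement is correct as written.
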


As a matter of fact, the Extended Minkowski Inequality~\eqref{minkowskif} is deduced as the limit, for $p\to 1^+$, of the following geometric $p$-capacitary inequality, which we believe of independent interest.
\begin{theorem}[$L^p$-Minkowski Inequality]
\label{premink}
Let $\Omega \subset \R^n$ be an open bounded set with smooth boundary. Then, for every $1<p<n$, the following inequality holds
\begin{equation}
\label{preminkf}
{\rm C}_p (\Omega)^{\frac{n-p-1}{n-p}}  \leq \,  \frac{1}{\,  \abs{\Sf^{n-1}} \,}\, \int\limits_{\partial \Omega} \, \left\vert \frac{\HH}{n-1}\right\vert^{p}\! \dd\sigma  \, ,
\end{equation}
where ${\rm C}_p(\Om)$ is the normalised $p$-capacity of $\Omega$ introduced in Definition~\ref{nor-pcap}.
Moreover, equality holds in~\eqref{preminkf} if and only if $\Omega$ is a ball.
\end{theorem}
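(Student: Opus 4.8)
The plan is to obtain \eqref{preminkf} by integrating one of the effective monotonicity formulas of Theorems~\ref{monotone1} and~\ref{monotone2} along the level set flow of the $p$-capacitary potential $u = u_p \colon \R^n \setminus \overline{\Om} \to \R$ of $\Om$, between the boundary level set $\pa\Om = \{u = 1\}$ and the asymptotic region $\{u \to 0^+\}$. First I would record the standard analytic facts about $u_p$: it is the weak solution of $\Delta_p u = 0$ in $\R^n \setminus \overline{\Om}$ with $u = 1$ on $\pa\Om$ and $u(x) \to 0$ as $\abs{x} \to \infty$; it belongs to $C^{1,\alpha}$ and is smooth outside its critical set $\crit(u)$; by the Hopf boundary lemma $\abs{\nabla u} > 0$ in a collar of $\pa\Om$, so $\pa\Om$ is a regular level set whose mean curvature is exactly $\HH$; and $u_p$ decays at infinity like $\abs{x}^{-(n-p)/(p-1)}$, with the precise coefficient --- and hence the limiting radius $r_t$ of the asymptotically round level sets $\{u = t\}$ as $t \to 0^+$ --- encoded by the normalised $p$-capacity ${\rm C}_p(\Om)$ of Definition~\ref{nor-pcap}. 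This expansion may be differentiated once and twice, so that for small $t$ the hypersurface $\{u = t\}$ is a normal graph over the round sphere of radius $r_t$ and its unit normal, second fundamental form and mean curvature converge to those of that sphere.

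Feeding $u_p$ into the effective monotonicity formula produces a monotone function $t \mapsto \mathcal U_p(t)$ on $(0,1)$, obtained by integrating over $\{u = t\}$ a density assembled from $\abs{\nabla u}$ and the mean curvature $H_t$ of the regular part of the level set, normalised so as to be constant on the rotationally symmetric model; the whole point of the formula is that this monotonicity is not destroyed at the --- a priori large --- critical values of $u_p$. Monotonicity then yields the single comparison I need, between $\mathcal U_p$ near $t = 1$ and near $t = 0$. At $t = 1$ the level set is the regular hypersurface $\pa\Om$ with $H_1 = \HH$, and evaluating the density there gives --- after using $\Delta_p u_p = 0$ to simplify it on the regular level set and a H\"older inequality to trade the $\abs{\nabla u_p}$-weights for the pure curvature integrand --- the quantity $\abs{\Sf^{n-1}}^{-1}\int_{\pa\Om}\abs{\HH/(n-1)}^{p}\dd\sigma$. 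In the limit $t \to 0^+$, the asymptotic description above makes the level sets round and the density computes explicitly to ${\rm C}_p(\Om)^{(n-p-1)/(n-p)}$. Chaining the two evaluations through the monotonicity of $\mathcal U_p$ gives exactly \eqref{preminkf}.

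For the rigidity statement, equality in \eqref{preminkf} forces both $\mathcal U_p$ to be constant on $(0,1)$ and the H\"older step above to be an equality. The first, through the rigidity built into the sign of $\mathcal U_p'$ (a combination of squared traceless second fundamental form and traceless Hessian terms, in the spirit of the Colding--Minicozzi and Huisken--Ilmanen formulas), forces every regular level set of $u_p$ to be totally umbilic with $\abs{\nabla u_p}$ constant along it; propagated through the foliation, this shows that $u_p$ is rotationally symmetric, whence $\pa\Om$ is a round sphere and $\Om$ is a ball. The H\"older equality independently forces $\HH$ to be constant on $\pa\Om$, which is consistent with the previous conclusion. Conversely, the computation on the exterior of a ball shows that equality does hold when $\Om$ is a ball.

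The step I expect to be the main obstacle is the evaluation at $t = 1$ together with its equality case: one must identify the monotone density, restricted to the regular level set $\pa\Om$, with the curvature integrand of \eqref{preminkf}, controlling the $\abs{\nabla u_p}$-weights through a H\"older inequality with exactly the right exponents, and then show that the monotonicity-rigidity and this H\"older inequality are saturated simultaneously only when $\Om$ is a ball. By contrast, the asymptotic analysis at infinity --- which requires the expansion of $u_p$ up to the order that controls $H_t$ --- is by now routine, if technical, for $p$-harmonic functions in exterior domains, and the passage through $\crit(u_p)$ is taken care of, once and for all, by the monotonicity theorems themselves.
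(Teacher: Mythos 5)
Your overall scheme matches the paper's: flow along the level sets of $u_p$, compare the interpolating quantity between the boundary $\{u_p=1\}$ and the asymptotic region $\{u_p\to 0^+\}$, use the asymptotic expansion of $u_p$ to evaluate the limit in terms of ${\rm C}_p(\Omega)$, and use a H\"older inequality at the boundary to produce the mean-curvature integrand. But there are two conceptual imprecisions that are more than cosmetic, because they obscure exactly where the technical work of the paper goes.

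First, the effective monotonicity formulas of Theorems~\ref{monotone1} and~\ref{monotone2} do \emph{not} assert that $\tau\mapsto U_p(\tau)$ is monotone on $(0,1)$, and the paper explicitly cautions against this reading. The critical values of $u_p$ may a priori fill a set of positive measure, so the pointwise inequality $U_p'(\tau)\geq 0$ can fail, or be undefined, on a large subset of $(0,1)$, and the global comparison cannot be obtained ``by integration.'' What the theorems actually deliver are precisely the two endpoint inequalities
\begin{equation}
U_p'(1)\,\geq\, 0
\qquad\text{and}\qquad
\lim_{\tau\to 0^+} U_p(\tau)\,\leq\, U_p(1)\,,
\end{equation}
corresponding to $\Phi_p'(0)\leq 0$ and $\Phi_p(+\infty)\leq\Phi_p(0)$ in the conformal variable. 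Obtaining the second one despite the a priori wild critical set is exactly what requires the new vector field $Y_\lambda$ and Theorem~\ref{monotone2}; it is not a corollary of some global monotonicity on $(0,1)$.

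Second, you compress two distinct steps into one when you say that ``evaluating the density at $t=1$ gives, after simplification and H\"older, $\abs{\Sf^{n-1}}^{-1}\int_{\pa\Om}\abs{\HH/(n-1)}^{p}\dd\sigma$.'' The value $U_p(1)$ itself is just $\int_{\pa\Om}\abs{\D u_p}^p\dd\sigma$ and involves no curvature. The mean curvature enters only through the \emph{derivative}
\begin{equation}
U_p'(1)\,=\,\frac{1}{p-1}\int\limits_{\pa\Om}\abs{\D u_p}^{p-1}\,\HH\,\dd\sigma\;-\;\frac{n-1}{n-p}\int\limits_{\pa\Om}\abs{\D u_p}^{p}\dd\sigma\,,
\end{equation}
and one needs the separate input $U_p'(1)\geq 0$ (from Theorem~\ref{monotone1}) to dominate $\int_{\pa\Om}\abs{\D u_p}^p\dd\sigma$ by the mean-curvature term, then H\"older with exponents $p/(p-1)$ and $p$ to remove the $\abs{\D u_p}$-weight. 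Both effective inequalities are used, and in different roles: one at the level of the derivative at the boundary, the other as an endpoint-to-endpoint comparison. Your write-up only acknowledges the second.

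The rigidity discussion inherits the first confusion: equality does not entitle you to conclude that $U_p$ is constant on $(0,1)$. The paper's argument only extracts $U_p'(1)=\Phi_p'(0)=0$, shows that this forces $\dive_g X=0$ and hence $\abs{\nabla\nabla\phi}_g=0$ on the sublevel set up to the first critical value, deduces a local cylindrical splitting with $\abs{\nabla\phi}_g$ constant, and only then shows that this is incompatible with the existence of any critical value; the absence of critical points is a \emph{conclusion}, not a hypothesis or a consequence of monotonicity being saturated globally. If you repair the description of what Theorems~\ref{monotone1} and \ref{monotone2} give you, the rest of your plan lines up with the paper's proof, including the H\"older step, the asymptotic computation via ${\rm C}_p(\Omega)$, and the splitting argument for rigidity.
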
  
In order to deduce~\eqref{minkowskif} from~\eqref{preminkf}, one needs to compute the limit of the $p$-capacity of a bounded set with smooth boundary as $p \to 1^+$. 
Apart from the case of convex domains, 
treated in~\cite{Xiao},
we were unable to find  in the literature a complete and satisfactory discussion of this very basic issue. For this reason,  we have established that
\begin{equation}
\lim_{p\to 1^+} {\rm C}_p (\Omega) \, = \, \frac{\abs{\partial \Omega^*}}{ \abs{\Sf^{n-1}}}
\end{equation}
in Theorem~\ref{limit-pcapth} of Section~\ref{sec:min}.
%We acknowledge that relevant steps in the proof of the above inequality are inspired by the book of Maz'ya \cite{mazia}.

\medskip

As an immediate corollary of Theorem~\ref{minkowski}, we recover the Minkowski Inequality for {\em outward minimising} sets, since for every $\Om$ in this class it holds $|\pa \Om| = |\pa \Om^*|$ (see Remark~\ref{alt-def}) and $\HH \geq 0$, as a standard variational computation readily shows. Such inequality was originally conceived by Huisken in~\cite{Hui_video}, exploiting the theory of weak solutions to the IMCF, previously developed in~\cite{Hui_Ilm} (see also~\cite[Theorem 2--(b)]{freire} for a published version of the argument in the case of outward minimising sets with strictly mean-convex boundary). The conceptual differences between this method and our technique are described in Section~\ref{sec:method}, where we also discuss some of the technical subtleties arising in the two approaches.

\begin{corollary}[Minkowski Inequality for Outward Minimising Sets]
\label{meanconvexki}
If $\Omega \subset \R^n$ is a bounded outward minimising open 
set with smooth boundary, then
\begin{equation}
\label{meanconvexkif}
\left(\frac{|\pa \Omega|}{|\Sf^{n-1}|}\right)^{\!\!\frac{n-2}{n-1}} \!\!\! \leq \, \frac {1}{|\Sf^{n-1}|} \,\, \int\limits_{ \pa \Omega}   \frac{\HH}{n-1}  \, \dd \sigma \, .
%\left(\frac{|\Sf^{n-1}|}{|\pa \Omega|} \right)^{\!\!1/(n-1)} \!\!\!\!\! \leq \,\,\,\,\, \fint\limits_{\pa \Omega} \,  \frac{\HH}{n-1}  \, \dd \sigma \, .
%\abs{\partial \Omega}^{\frac{n-2}{n-1}} \abs{\Sf^{n-1}}^{\frac{1}{n-1}} \leq \int\limits_{\partial \Omega} \frac{\HH}{n-1}\,  \dd\sigma\, . 
\end{equation}
Moreover, the dimensional constants appearing here
are optimal, in the sense that 
\begin{equation}
\min \left\{  \left. |\pa \Om|^{-\frac{n-2}{n-1}}\! \int\limits_{\pa\Om} \! \HH \, \dd \sigma  \,\, \right| \,\, \Omega \Subset \R^n  \hbox{outward minimising}\, , \,\, \hbox{with $\pa \Om$ smooth}    \right\}  \, = \,\, (n-1) \, \abs{\Sf^{n-1}}^{\frac{1}{n-1}} \, ,
\end{equation}
and the minimum is achieved on spheres. Viceversa, if the equality holds in~\eqref{meanconvexkif} for some bounded {\em strictly outward minimising} open 
set with smooth and {\em strictly mean-convex} boundary, then $\Omega$ is isometric to a round ball.
\end{corollary}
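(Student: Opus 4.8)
The plan is to obtain all three assertions from the two Minkowski-type inequalities already established, namely the Extended Minkowski Inequality~\eqref{minkowskif} and the $L^p$-Minkowski Inequality~\eqref{preminkf}.

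First I would dispatch~\eqref{meanconvexkif} itself. Since $\Omega$ is outward minimising it is an admissible competitor for its own strictly outward minimising hull, whence $|\pa\Omega|=|\pa\Omega^*|$ (this is the content of Remark~\ref{alt-def}). Moreover a standard first-variation argument forces $\HH\geq 0$ on $\pa\Omega$: if $\HH$ were negative on a portion of $\pa\Omega$, a nonnegative outward normal deformation supported there would produce a set strictly containing $\Omega$ and with strictly smaller perimeter, contradicting outward minimality. Therefore $|\HH|=\HH$ on $\pa\Omega$, and~\eqref{minkowskif} specializes exactly to~\eqref{meanconvexkif}. For the optimality of the dimensional constant I would simply evaluate the scale-invariant functional $|\pa\Omega|^{-(n-2)/(n-1)}\int_{\pa\Omega}\HH\,\dd\sigma$ on a round ball: balls are outward minimising (a set containing $B_r$ has volume at least $|B_r|$, hence by the Isoperimetric Inequality perimeter at least $|\pa B_r|$), and this functional equals $(n-1)\,|\Sf^{n-1}|^{1/(n-1)}$ on every $B_r$; combined with~\eqref{meanconvexkif} this identifies the minimum and shows it is attained precisely on spheres.

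The real content is the rigidity statement. Assume equality in~\eqref{meanconvexkif} for a bounded strictly outward minimising $\Omega$ with smooth and strictly mean-convex boundary. Strict outward minimality gives $\Omega^*=\Omega$ up to a negligible set, so $|\pa\Omega^*|=|\pa\Omega|$; by Theorem~\ref{limit-pcapth} this means that the left-hand side of~\eqref{preminkf} converges, as $p\to 1^+$, to $(|\pa\Omega|/|\Sf^{n-1}|)^{(n-2)/(n-1)}$. On the other hand $\HH$ is bounded between two positive constants on the compact $\pa\Omega$, so $|\HH/(n-1)|^{p}\to\HH/(n-1)$ uniformly and the right-hand side of~\eqref{preminkf} converges to $\tfrac{1}{|\Sf^{n-1}|}\int_{\pa\Omega}\tfrac{\HH}{n-1}\,\dd\sigma$. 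Hence the assumed equality in~\eqref{meanconvexkif} forces~\eqref{preminkf} to hold with equality in the limit $p\to 1^+$. To turn this into geometric rigidity I would go back to the proof of Theorem~\ref{premink}: that inequality comes from integrating the effective monotonicity formula of Theorems~\ref{monotone1} and~\ref{monotone2} along the level sets of the $p$-capacitary potential $u_p$, and the gap between its two sides equals, up to a positive factor, the total contribution of the nonnegative remainder in that formula, which measures the failure of the level sets of $u_p$ to be umbilical round spheres. Vanishing of this gap in the limit then forces the initial level set $\pa\Omega$ to be totally umbilical, hence (being compact, and connected thanks to outward minimality together with strict mean-convexity) a metric sphere, so that $\Omega$ is a round ball. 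A cleaner variant, which I would attempt in parallel, is to show that the equality actually propagates to equality in~\eqref{preminkf} for some fixed $p\in(1,n)$ — e.g. via a strict form of the monotonicity in $p$ of the defect, or via a comparison of one-sided derivatives at $p=1^+$ — and then invoke verbatim the equality case of Theorem~\ref{premink}.

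The step I expect to be the main obstacle is exactly this last one: extracting rigidity of $\Omega$ from an equality known only in the limit $p\to 1^+$. Either route requires controlling the remainder terms in the monotonicity formula \emph{uniformly} in $p$ as $p\to 1^+$, so that their asymptotic vanishing can be read off as a pointwise identity on $\pa\Omega$; here strict mean-convexity of $\pa\Omega$ should be precisely the hypothesis that keeps $u_p$ regular near $\pa\Omega$ (free of critical points, with the level set foliation smooth) for all $p$ sufficiently close to $1$, and hence the hypothesis that makes the limiting analysis legitimate.
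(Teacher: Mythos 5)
Your proof of the inequality \eqref{meanconvexkif} and of the optimality statement is exactly the paper's argument: outward minimality forces $|\pa\Om|=|\pa\Om^*|$ (Remark~\ref{alt-def}) and $\HH\geq 0$ by a first-variation argument, so \eqref{minkowskif} specialises directly, and evaluation on balls (which are outward minimising) shows that the constant is attained.

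For the rigidity statement, however, you have correctly isolated the obstacle but not overcome it, and the paper's route is entirely different. The difficulty you flag is real: equality in~\eqref{meanconvexkif} only tells you that the defect in~\eqref{preminkf} tends to zero as $p\to 1^+$; it does \emph{not} give equality for any fixed $p$, and there is no obvious compactness that turns asymptotic vanishing of the $p$-dependent remainder into a pointwise geometric identity on $\pa\Om$, since both the conformal metric $g=u_p^{2(p-1)/(n-p)}g_{\R^n}$ and the potential $u_p$ degenerate as $p\to 1^+$. Your ``cleaner variant'' (propagating equality back to some fixed $p$) would require a quantitative stability or strict monotonicity in $p$ of the defect that is not established anywhere in the paper.

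The paper avoids the issue altogether by switching to a short-time smooth IMCF argument, and this is where you should look. Since $\pa\Om$ is smooth and strictly mean-convex, the smooth IMCF $\{\pa\Om_t\}_{t\in[0,T)}$ exists for short time; by~\cite[Lemma~2.4]{Hui_Ilm} it agrees with the weak IMCF on $[0,T^*)$, and by~\cite[Lemma~1.4]{Hui_Ilm} each $\Om_t$ is again strictly outward minimising with strictly mean-convex boundary, so~\eqref{meanconvexkif} holds along the flow. Then
\begin{equation}
\mathcal Q(t)\,=\,|\pa\Om_t|^{-\frac{n-2}{n-1}}\!\int\limits_{\pa\Om_t}\!\HH\,\dd\sigma
\end{equation}
is a smooth function of $t$, the equality hypothesis says $\mathcal Q(0)$ is the minimum value $(n-1)|\Sf^{n-1}|^{1/(n-1)}$, the inequality holding for all $t\in[0,T^*)$ then forces $\mathcal Q'(0)\geq 0$, while the standard evolution equations give
\begin{equation}
\mathcal Q'(0)\,=\,-\,|\pa\Om|^{-\frac{n-2}{n-1}}\!\int\limits_{\pa\Om}\frac{|\mathring{\hh}|^2}{\HH}\,\dd\sigma\,\leq\,0 .
\end{equation}
Hence $\mathcal Q'(0)=0$, so $\mathring{\hh}\equiv 0$, i.e.~$\pa\Om$ is totally umbilical, hence a sphere. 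This is a first-order variation argument along the flow, and it sidesteps entirely the limit $p\to 1^+$ that blocks your route. Your rigidity argument therefore has a genuine gap; the strategy you sketch is not the one the paper uses, and I do not see how to close it without substantial additional uniform-in-$p$ estimates that are not available here.
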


It is worth pointing out that for $n \leq 7$, the above corollary can be shown to be equivalent to Theorem~\ref{minkowski} at the price of a non completely trivial approximation procedure. Indeed, starting from a possibly non outward minimising $\Omega$, one immediately checks that 
\begin{equation}
\frac {1}{|\Sf^{n-1}|} \,\, \int\limits_{ \pa \Omega^*}   \frac{\HH}{n-1}  \, \dd \sigma \,\, \leq
\,\,\frac {1}{|\Sf^{n-1}|} \,\, \int\limits_{ \pa \Omega}   \left|\frac{\HH}{n-1}\right|  \, \dd \sigma \, ,
\end{equation}
where $\Om^*$ is the strictly outward minimising hull of $\Om$. On the other hand, when $n \leq 7$ the main result in~\cite{sternberg-williams} guarantees that $\pa \Omega^*$ is of class $\mathscr{C}^{1,1}$. In particular, with the help of~\cite[Lemma 2.6]{huisken-ilm_higher}, it can be approximated in the $\mathscr{C}^{1} \cap W^{2,1}$-topology by a sequence of smooth and strictly mean-convex hypersurfaces. Arguing as in~\cite[Lemma 5.6]{Hui_Ilm} one realises that these hypersurfaces are bounding strictly outward minimising sets $\{ \Om^*_\ep \}_{\ep>0}$, so that inequality~\eqref{meanconvexkif} is satisfied by the whole approximating sequence. Letting $\ep \to 0$ yields
\begin{equation}
\left(\frac{|\pa \Omega^*|}{|\Sf^{n-1}|}\right)^{\!\!\frac{n-2}{n-1}} \!\!\! \leq \, \frac {1}{|\Sf^{n-1}|} \,\, \int\limits_{ \pa \Omega^*}   \frac{\HH}{n-1}  \, \dd \sigma \, ,
\end{equation}
from which the thesis follows at once. Obviously, this argument breaks down in higher dimension, due to the serious regularity issues coming from the possible presence of a non-empty singular set in the area minimising portion of $\pa\Om^*$.

\medskip

A simple and very nice consequence of inequality~\eqref{meanconvexkif} is a nearly umbilical estimate for {\em outward minimising} surfaces in $\R^3$ with {\em optimal constant}. The relation between the Minkowski Inequality and the nearly umbilical estimates was suggested by Huisken in~\cite{Hui_video}. Here, for the sake of reference, we included a proof of this fact in Section~\ref{sec:cor} (see Theorem \ref{delellis-muller}). The general nearly umbilical estimate for surfaces in $\R^2$  with an implicit dimensional  constant is a very remarkable theorem, proved in~\cite{DeLellis_Mueller} by De~Lellis and M\"uller.
We refer the reader to the original paper~\cite{DeLellis_Mueller} as well as to the Ph.D. thesis~\cite{perez-thesis} and the references therein for a complete account about the geometric features and implications of such a deep result.

\medskip

Another direct consequence of Theorem~\ref{minkowski} is the following optimal version of inequality~\eqref{eq:minkvol}, holding
for bounded open sets with smooth boundary.

%
%Applying the Isoperimetric Inequality to the left hand side of~\eqref{minkowskif}, and taking into account that
%%, being a  hull, 
%$\Omega\subseteq \Omega^*$, we deduce at once the volumetric version of the Minkowski Inequality, holding
%%Higher Order Isoperimetric Inequality 
%for bounded open sets with smooth boundary.
\begin{theorem}[Volumetric Minkowski Inequality]
\label{minkvol}
Let $\Omega \subset \R^n$ be a bounded open set with smooth boundary. Then
\begin{equation}
\label{minkvolf}
\left(\frac{| \Omega|}{|\mathbb{B}^{n}|}\right)^{\!\!\!\frac{n-2}{n}} \!\!\! \leq \, \frac {1}{|\Sf^{n-1}|} \,\, \int\limits_{ \pa \Omega}   \left|\frac{\HH}{n-1}\right|  \, \dd \sigma \, .
\end{equation}
Moreover, equality holds  in \eqref{minkvolf} if and only if $\Omega$ is a ball.
\end{theorem}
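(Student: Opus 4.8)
The plan is to obtain Theorem~\ref{minkvol} as a direct consequence of the Extended Minkowski Inequality~\eqref{minkowskif}, by bounding its left-hand side from below through the classical sharp Euclidean Isoperimetric Inequality. Assume $\Om\neq\emptyset$, so that $|\Om|>0$ (otherwise there is nothing to prove). Recall from the discussion preceding Definition~\ref{smh-def} (and from Section~\ref{sec:min}) that the strictly outward minimising hull $\Om^*$ is a bounded set of finite perimeter containing $\Om$ up to a Lebesgue-negligible set; in particular $|\Om|\le|\Om^*|$. Applying the Isoperimetric Inequality to $\Om^*$ and then this volume comparison yields
\[
\frac{|\pa\Om^*|}{|\Sf^{n-1}|}\;\ge\;\left(\frac{|\Om^*|}{|\mathbb{B}^{n}|}\right)^{\frac{n-1}{n}}\;\ge\;\left(\frac{|\Om|}{|\mathbb{B}^{n}|}\right)^{\frac{n-1}{n}}.
\]

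Since $n\ge3$, the exponent $\tfrac{n-2}{n-1}$ is nonnegative, so $t\mapsto t^{(n-2)/(n-1)}$ is nondecreasing on $[0,\infty)$. Raising the previous chain to this power gives
\[
\left(\frac{|\pa\Om^*|}{|\Sf^{n-1}|}\right)^{\frac{n-2}{n-1}}\;\ge\;\left(\frac{|\Om|}{|\mathbb{B}^{n}|}\right)^{\frac{n-2}{n}},
\]
and combining this with~\eqref{minkowskif} produces exactly inequality~\eqref{minkvolf}.

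For the equality case, if~\eqref{minkvolf} holds with the equality sign then every inequality used above must be an equality. In particular the Isoperimetric Inequality is saturated by $\Om^*$, so that $\Om^*$ agrees almost everywhere with some round ball $B$, and at the same time $|\Om|=|\Om^*|=|B|$. Since $\Om\subseteq\Om^*$ up to a null set, $\Om$ and $B$ have equal measure with $\Om\subseteq B$ up to a null set, forcing $\Om=B$ almost everywhere; as $\Om$ is open with smooth boundary this means that $\Om$ is the open ball $B$. The reverse implication is immediate: if $\Om$ is a ball of radius $r$ then $\Om^*=\Om$, $\HH\equiv(n-1)/r$ on $\pa\Om$, and both sides of~\eqref{minkvolf} equal $r^{n-2}$.

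Thus the inequality itself is a one-line manipulation once Theorem~\ref{minkowski} is in hand; the only point deserving attention is the rigidity statement, where one must pair the rigidity of the Euclidean Isoperimetric Inequality with the identity $|\Om|=|\Om^*|$ to pass from ``$\Om^*$ is a ball'' to ``$\Om$ is a ball'', being slightly careful about the ``up to negligible components'' clause built into the definition of $\Om^*$. I expect this to be the (rather mild) main obstacle.
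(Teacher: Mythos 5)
Your argument is correct and follows the paper's own (very terse) derivation exactly: apply the sharp Euclidean Isoperimetric Inequality to $\pa\Omega^*$, use $\Omega\subseteq\Omega^*$ to replace $|\Omega^*|$ by $|\Omega|$, raise to the appropriate power, and feed the result into the Extended Minkowski Inequality~\eqref{minkowskif}; the rigidity case likewise reduces, as the paper indicates, to the rigidity of the Isoperimetric Inequality together with the observation that $|\Omega|=|\Omega^*|$ and $\Omega\subseteq\Omega^*$ force $\Omega=\Omega^*$. Your filling-in of the measure-theoretic details (that $\Omega$, being open, is genuinely contained in the measure-theoretic interior defining $\Omega^*$) is a welcome precision that the paper leaves implicit.
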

Inequality~\eqref{minkvolf} is achieved
applying the Isoperimetric Inequality to the left hand side of~\eqref{minkowskif}, and taking into account that $\Omega\subseteq \Omega^*$, the latter being a hull.
Observe that the rigidity statement in the above theorem follows essentially from the rigidity statement of the Isoperimetric Inequality. To the authors' knowledge, the above inequality was previously known to hold for domains with a strictly mean-convex boundary of positive scalar curvature (for short $\pa \Omega \in \Gamma_2^+$). On this regard, we refer the reader to the paper~\cite{Chang_Wang_2013} and the subsequent~\cite{qiu}, where the inequality is proved with methods based on Optimal Transport. For $n\leq 7$, we note that inequality~\eqref{minkvolf} can also be deduced from Corollary~\ref{meanconvexki} through the approximation argument outlined above.

%%%%%%%%%%%%%%%%%%%%%%%%%%%%%%%%%%%%%%%%%%%%%%%

\subsection{Summary}

%%%%%%%%%%%%%%%%%%%%%%%%%%%%%%%%%%%%%%%%%%%%%%

In Section~\ref{sec:method} we describe the main features of our method through a fairly systematic comparison with the previous approaches, based on the IMCF. Approximations schemes {\em \`a la} Moser~\cite{moser-jems} and formal analogies are employed to make some heuristic considerations as well as to introduce the main technical challenges of the present work. After collecting some preparatory material in Section~\ref{material}, we face these challenges in Section~\ref{proof}, which constitutes the core of our analysis. There we solve the issues coming from the presence of critical points, proving {\em effective monotonicity formulas} (see Theorem~\ref{monotone1} and Theorem~\ref{monotone2}), whose validity persists also beyond possible jumps. Having this tools at hand, we prove the $L^p$-Minkowski Inequality~\eqref{preminkf} and then, passing to the limit as $p\to 1^+$, we prove, in Section~\ref{sec:min}, the Extended Minkowski Inequality~\eqref{minkowskif}. Here the main difficulty is to characterise in a geometrically meaningful way the limit of the $p$-capacity of the domain under consideration. We accomplish this task in Theorem~\ref{limit-pcapth}. 

%Finally, in Section~\ref{sec:cor} we present some corollaries and related results. Among them we mention an optimal version of the well celebrated De Lellis-M\"uller Nearly Umbilical Estimates for outward minimising domains (Theorem~\ref{delellis-muller}) and the proof
%that any starshaped mean-convex domain is necessarily outward minimising (Theorem~\ref{th-star}).
%
%%%%%%%%%%%%%%%%%%%%%%%%%%%%%%%%%%%%%%%%%%%%%%%%%%
%%%%%%%%%%%%%%%%%%%%%%%%%%%%%%%%%%%%%%%%%%%%%%%%%%

\section{Inverse Mean Curvature Flow Versus Nonlinear Potential Theory} 
\label{sec:method}

%%%%%%%%%%%%%%%%%%%%%%%%%%%%%%%%%%%%%%%%%%%%%%%%%%
%%%%%%%%%%%%%%%%%%%%%%%%%%%%%%%%%%%%%%%%%%%%%%%%%%

Having introduced the main results of this paper in terms of geometric inequalities, we now describe the method that will be employed to deduce them. The most appropriate way to accomplish this task is to compare 
our approach set in nonlinear potential theory
with the one based on the IMCF, 
in both its smooth and weak version.

%%%%%%%%%%%%%%%%%%%%%%%%%%%%%%%%%%%%%%%%%%%%%%%%%%

\subsection{Smooth and Weak Inverse Mean Curvature Flow.}
\label{smuteuic}

%%%%%%%%%%%%%%%%%%%%%%%%%%%%%%%%%%%%%%%%%%%%%%%%%%

Using the smooth IMCF it is  possible to provide an extension of the classical Minkowski Inequality~\eqref{eq:mink2} for convex domains 
to the family of {\em starshaped domains} with {\em strictly mean-convex boundary}. This approach has been completely developed in~\cite{Guan_Li} and relies essentially on the results in~\cite{gerhardt} and~\cite{urbas}, where it is proven that if $ \Om$ is strictly mean-convex and starshaped, then the IMCF $\{\partial \Om_t \}_{t\geq 0}$ starting at $\pa \Omega$ is defined and smooth for all times. In this case it is possible to carry out a smooth computation, showing that the function 
\begin{equation}
\label{eq:mon}
t \,\, \longmapsto \,\, 
%\mathscr{F} (\Sigma_t) \, = \, 
|\pa\Om_t|^{-\frac{n-2}{n-1}} \!\int\limits_{\pa\Om_t} \!\HH \,\dd \sigma_{t}\, 
%\qquad \hbox{with} \,\,\,\, \Sigma_0 = \pa \Omega
\end{equation}
is non increasing. The Minkowski Inequality then follows 
from the main theorems in~\cite{gerhardt, urbas}. These ensure that, as $t \to + \infty$, the hypersurfaces $\pa \Om_t$ converge to a round sphere, once they are suitably rescaled, so that 
\begin{equation}
\label{eq:monlim}
|\pa \Om|^{-\frac{n-2}{n-1}} \! \int\limits_{\pa \Om} \!\HH \,\dd \sigma \,\, \geq \,\, \lim_{t \to + \infty} |\pa\Om_t |^{-\frac{n-2}{n-1}} \!\int\limits_{\pa\Om_t} \!\HH \,\dd \sigma_{t} \,\, = \,\, (n-1) \, |\Sf^{n-1}|^{\frac{1}{n-1}} \, .
\end{equation}
This approach, which is extremely clean and quite flexible, 
has found remarkable applications also in non Euclidean contexts (see~\cite{Brendle,ge-hyperbolic1,ge-hyperbolic2,ge-kottler,deLima-hyperbolic,Mak_She}). However, it is suitable only for those hypersurfaces that do not change topology along their evolution. 
For example, if the ambient manifold is (asymptotically) flat, it applies only to hypersurfaces  with spherical topology. 
%In fact, a starshaped hypersurface, being a graph over $\Sf^{n-1}$, is automatically diffeomorphic to $\Sf^{n-1}$. 

These topological restrictions can be overtaken considering weak solutions of the IMCF starting at the boundary of {\em outward minimising sets}, as described in Huisken-Ilmanen's theory~\cite{Hui_Ilm,huisken-ilm_higher}. In fact, weak solutions are engineered in order to allow for jumps,
at which the topological changes take place, 
preserving at the same time the monotonicity of the quantity~\eqref{eq:mon}. On this regard, it is worth pointing out explicitly that {\em the class of outward minimising sets includes the one of starshaped domains with strictly mean-convex boundary}, as it can be easily deduced combining the long time existence established in~\cite{gerhardt, urbas} with a simple integration by parts argument.

%Since we found the literature quite confusing on this point, we give a direct proof of this basic but fundamental fact in Theorem~\ref{th-star}. This follows somehow from a more general principle, stated in Theorem~\ref{prop-out2}, saying that if $\Omega$ is not outward minimising, then the smooth IMCF starting at $\partial \Omega$ cannot escape completely from the strictly outward minimising hull $\Omega^*$.
%It is also well known that for starshaped domains with strictly mean-convex boundary the smooth IMCF coincides with the weak IMCF for all times~\cite[Smooth Flow Lemma 2.3]{Hui_Ilm}, and thus their treatment turns out to be strictly included in the treatment of outward minimising sets, even under the point of view of the analytical method employed. 

Needless to say that the larger generality obtained through weak solutions comes at the cost of a much more sophisticated and delicate theory, whose extension to different contexts is not straightforward at all. For example, one of the major difficulties is to show that the monotonicity formulas survive the jumps. In~\cite{Hui_Ilm} this is achieved by means of an elliptic regularisation procedure in which the weak solution of the IMCF is approximated by a family of smooth functions whose level sets obey a slightly modified version of the desired monotonicity. To the best of our knowledge, such a spectacular though technically demanding construction has never been replied beyond the original context of asymptotically flat Riemannian manifolds~\cite{Hui_Ilm,Hui_video,freire,Wei,mccormick}, with the only exception of \cite[Theorem 3.2]{Lee_Nev}, where the authors have checked that Huisken-Ilmanen theory applies to the
case under consideration. Hence, the expected extensions of the results in~\cite{Brendle,ge-hyperbolic1,ge-hyperbolic2,ge-kottler,deLima-hyperbolic} to the case of outward minimising hypersufaces are missing so far.

Another crucial point in the weak IMCF theory is the characterisation of the limits of the relevant monotone quantities. To be more specific, one would like to extend the validity of~\eqref{eq:monlim} to the class of weak solutions of the IMCF starting at an outward minimising domain. A possible way to accomplish this task is to invoke~\cite[Theorem 2.7-(b)]{huisken-ilm_higher} and then again~\cite[Theorem 0.1]{gerhardt}. Indeed, the former affirms that the weak IMCF eventually becomes strictly starshaped at some large enough time, so that the latter applies yielding the desired asymptotic behaviour. The proof of~\cite[Theorem 2.7-(b)]{huisken-ilm_higher} relies in turn on a blow-down analysis that can be carried out following the proof of~\cite[Blowdown Lemma 7.1]{Hui_Ilm}.
{This argument represents a quite delicate point, to treat which a remarkable amount of theory needs to be employed.} {In the present paper, as explained in the next subsection, both the problem of dealing with possible jumps and the problem of characterising the limit of the relevant monotone quantities will be faced in a completely different way, with the help of nonlinear potential theory.}

%%%%%%%%%%%%%%%%%%%%%%%%%%%%%%%%%%%%%%%%%%%%%%%%%

\subsection{Level sets of $p$-capacitary potentials.}
\label{sub:level} 

%%%%%%%%%%%%%%%%%%%%%%%%%%%%%%%%%%%%%%%%%%%%%%%%%

The key point in our approach is to replace the
weak IMCF technique with a novel analysis of a very natural family of functions, namely the $p$-capacitary potentials of $\Omega$. 
These are the weak solutions to the problems 
\begin{align}
%\label{pb}
\left\{\begin{array}{lll}
\Delta_p u=0 & \mbox{in} & \R^n\setminus \overline{\Omega} \, ,\\
\quad \,\,u=1 & \mbox{on} & \partial\Omega \, ,\\
\, u(x)\to 0 & \mbox{as} & |x|\to\infty \, ,
\end{array}\right.
\end{align}
with $1<p<n$.

 For heuristic considerations, it is useful to recall that these functions are playing an ancillary role in the weak IMCF theory, by virtue of a beautiful approximation result, originally due to R.~Moser~\cite{moser-jems} and subsequently extended by Kotschwar and Ni~\cite{kotschwar} (see also the very recent~\cite{mari-rigoli-setti}). This result says that if $u_p$ is a weak solution to the above problem, then the functions $w_p = -(p-1) \log u_p$ converge locally uniformly in $\R^n\setminus {\Omega}$ to a proper weak solution of the IMCF, as $p\to 1^+$. As a formal justification of this fact,
it can be noted that the functions $w_p$ satisfy the identities
\begin{equation*}
\Delta_p w_p \, = \,\,  \abs{\D w_p}^p ,
\end{equation*}
which in the limit as $p \to 1^+$ becomes  
\begin{equation}
\label{eq:levelsets}
\dive\left(\frac{\D w}{\abs{\D w}}\right)  = \,\,\abs{\D w} \, . \phantom{\qquad \quad}
\end{equation}
As it is well known, the latter equation corresponds to the level set formulation of the IMCF, since the left hand side equals the mean curvature of the level sets of $w$. 

As a concrete application of this elegant approximation scheme one immediately earns an alternative and flexible approach to the {\em existence} theory for the weak IMCF. Unfortunately, such a scheme is not as much effective for drawing geometric conclusions. For example, one would hopefully like to use it to deduce the validity of~\eqref{eq:monlim} from the nowadays classical asymptotic expansions of the $u_p$'s (see Lemma~\ref{asyu}). However, the convergence is not strong enough to take the limit of the integral quantities, moreover it only holds on compact subsets of $\R^n \setminus \Om$, preventing this strategy from being successfully implemented.

In view of these considerations and motivated by the recent discoveries in the field of harmonic functions~\cite{Ago_Maz_1,Ago_Maz_3,Ago_Fog_Maz}, we then let the $p$-harmonic functions play the leading role, providing their already rich and well established theory with a so far missing ingredient, namely the existence of {\em monotonicity formulas} -- or a relaxed version of them -- holding along their level set flow and in presence of critical points. Once this will be done, we would get a self consistent theory, completely independent of the IMCF and powerful enough to recover and extend all the main geometric conclusions.

To clarify these concepts, let us first discuss the toy-problem case, where the $p$-capacitary potential has no critical points. 
In this case, which is treated in~\cite{Fog_Maz_Pin} 
assuming $\Omega$ convex, 
one finds that for every $1<p<n$ the function
\begin{equation}
\label{eq:p-mon}
(0,1]  \ni \, \tau \, \longmapsto \,\, 
U_p(\tau) \, = \, \tau^{-\frac{n-1}{n-p}} \!\!\!\int\limits_{\{ u_p = \tau \}} \!\!\! |\D u_p |^p \,\dd \sigma\, 
\end{equation}
is nondecreasing. The monotonicity readily implies~\eqref{preminkf}. In fact, computing the limit of $U_p$ as $\tau \to 0^+$, with the help of Lemma~\ref{asyu} (see~\cite[Lemma 2.6]{Fog_Maz_Pin} with $q=p/(p-1)$ for details), gives
\begin{equation}
\label{eq:limiticchio}
\Big(\frac{n-p}{p-1}\Big)^{\!p} \,  \abs{\Sf^{n-1}}\,\, {\rm C}_p (\Omega)^{\frac{n-p-1}{n-p}}  \, = \,  \lim_{\tau \to 0^+} U_p(\tau) \,\, \leq \,\, U_p(1) \, = \, \int\limits_{\partial \Omega} \abs{\D u_p}^{p} \dd\sigma \, .
\end{equation}
Next, one computes 
\begin{equation}
\label{eq:derivaticchia}
0 \,\,\leq \,\, U_p'(1) \, = \, \frac1{(p-1)} \int\limits_{\partial \Omega} \abs{\D u_p}^{p-1} \HH \dd\sigma  	\,\, -   \,\, \Big(\frac{n-1}{n-p}\Big) \int\limits_{\partial \Omega} \abs{\D u_p}^{p} \dd\sigma \, , 
\end{equation}
and thus, by the H\"older inequality, one gets
\begin{equation*}
\int\limits_{\partial \Omega} \abs{\D u_p}^{p} \dd\sigma \,\, \leq \,\,\Big(\frac{n-p}{p-1}\Big)^{\!p} \int\limits_{\partial \Omega} \left\vert \frac{\HH}{n-1}\right\vert^{p}\! \dd\sigma \, .
\end{equation*}
Combining the latter inequality with~\eqref{eq:limiticchio} 
yields~\eqref{preminkf}. 
The Extended Minkowski Inequality~\eqref{minkowskif} can thus be obtained in the limit of 
\eqref{preminkf}, as $p \to 1^+$, 
by using the analysis of Section~\ref{sec:min}.

%\bigskip
%\bigskip
%
% {\color{red} DIRE QUI O DOPO COME SI DEDUCE LA P-MINKOWSKI. FORSE E' MEGLIO DIRLO QUI E POI}
% 
%

A question arises whether the monotonicity of the $U_p$'s and the one of the quantity defined in~\eqref{eq:mon} are related or not. It turns out that the only way to answer this question is to proceed formally.
%In order to understand the relation between the monotonicity of $U_p$ and the one of the quantity defined in~\eqref{eq:mon}, 
%it is convenient to proceed formally. 
Setting as above $w_p = -(p-1) \log u_p$ and $t = -(p-1) \log \tau$, for every $1<p<n$, the monotonicity of the function 
$U_p$ is equivalent to the statement that the function
\begin{equation}
%\label{eq:p-mon2}
[0,+\infty)  \ni \, t \, \longmapsto \,\, 
{\rm e}^{-\frac{n-p-1}{n-p} \, t} \!\!\!\!\int\limits_{\{ w_p = t \}} \!\!\!\! |\D w_p |^p \,\dd \sigma\,
\end{equation}
is nonincreasing. Taking the formal limit as $p\to 1^+$, one would get the same monotonicity statement for the function
\begin{equation}
%\label{eq:p-mon3}
[0,+\infty)  \ni \, t \, \longmapsto \,\, 
{\rm e}^{-\frac{n-2}{n-1} \, t} \!\!\!\!\int\limits_{\{ w = t \}} \!\!\!\! |\D w | \,\dd \sigma\, \, ,
\end{equation}
where $w$ solves~\eqref{eq:levelsets}, and thus $|\D w| (x)$ coincides with the mean curvature of the level set passing through $x$. Recalling that $|\{w=t\}| = |\pa \Om_t| = |\pa \Om| \,  {\rm e}^t$ along the IMCF, it is easy to realise that the latter monotonicity is equivalent to the one in~\eqref{eq:mon}. However, it is important to emphasize once more that the above argument is just formal and does not prove anything, since $w_p$ is converging to $w$ only locally uniformly on compact sets and $w$ itself is nothing more than a weak solution to the IMCF.

%
%However, this suggests that monotonicity properties of the functions~\eqref{eq:p-mon} are related to the monotonicity of the function~\eqref{eq:mon}.
%

In presence of critical points for the $p$-capacitary potentials, the above formal derivation could appear in principle all the more \emph{na\"ive}, since -- unlike in the linear case
treated in~\cite{Ago_Maz_3} -- 
the monotonicity of~\eqref{eq:p-mon} is not even
\emph{a priori} guaranteed. This phenomenon is typical of the nonlinear setting and is basically due to the loss of analyticity of the solution and the consequent loss of control on the behaviour of the critical points and of the critical values. 
In particular, in the case of the $p$-capacitary potential, 
one cannot exclude {\em a priori} the presence of clusters of critical points and critical values with full measure. Due to these difficulties, it is impossible to re-adapt the strategy employed in the linear case~\cite{Ago_Maz_3} to earn the {\em full monotonicity} of the $U_p$'s. However, we will be able to prove in the next sections that the inequalities 
\begin{equation}
\label{eq:crucial}
0 \,\, \leq \,\, U_p'(1)
 \qquad \hbox{and} \qquad  \lim_{\tau \to 0^+} U_p(\tau) \,\, \leq \,\, U_p(1) \, 
\end{equation}
hold true. These inequalities -- together with their conformal counterparts defined below~\eqref{crucial_phi} -- will be referred to as {\em effective inequalities} and will be deduced in Section~\ref{proof} as consequences of some {\em effective monotonicity}
properties of the functions $U_p$ (see Theorems~\ref{monotone1} and~\ref{monotone2} combined with formul\ae~\eqref{upfip}). 
Here, the locution ``effective monotonicity'' should be understood in contrast with the ``full monotonicity'' which is instead enjoyed by the $U_p$'s either in the case where
$\Omega$ is convex 
(see~\cite{Fog_Maz_Pin})
or in the case $p=2$
(see~\cite{Ago_Maz_3}).

As explained through~\eqref{eq:limiticchio} 
and~\eqref{eq:derivaticchia}, the two conditions
in~\eqref{eq:crucial} are sufficient to deduce the $L^p$-Minkowski Inequality~\eqref{preminkf} and in turn the Extended Minkowski Inequality~\eqref{minkowskif}. It must be noticed that the proof of the inequalities~\eqref{eq:crucial} requires both a technical and a conceptual enhancement of the previously existing techniques (\cite{Ago_Maz_3, Ago_Maz_2, Fog_Maz_Pin}). This is particularly evident in the analysis leading to the second inequality, which is based on the discovery of a further family of monotonic functions
(see Theorem~\ref{monotone2}), whose existence was far beyond the horizon both of the linear case~\cite{Ago_Maz_3} 
and of the nonlinear convex case~\cite{Fog_Maz_Pin}.

%%%%%%%%%%%%%%%%%%%%%%%%%%%%%%%%%%%%%%%%%%%%%%%%%%

\subsection{Further directions.}

%%%%%%%%%%%%%%%%%%%%%%%%%%%%%%%%%%%%%%%%%%%%%%%%%%

In virtue of the previous observations it is quite clear that methods based on linear and nonlinear potential theory may provide an efficient alternative to the employment of the IMCF techniques in many contexts. To be concrete, let us just mention a couple of projects that represent a natural continuation of the present work. 

The first one is the potential revisitation of the IMCF proof of the Riemannian Penrose Inequality due to Huisken and Ilmanen~\cite{Hui_Ilm}. 
%\begin{comm}(in the spirit of the potential theoretic studies 
%of the positive mass theorem~\cite[CHRUSCIEL, 
%KIOWSKY, JEREWSKY]{})%\end{comm}. 
Indeed, it is not too hard to provide a formal guess of the 
monotonic quantities which are expected to play the same role as the Hawking Mass in the Geroch's monotonicity scheme. The hard part, as usual, is the treatment of the critical points. There are good reasons to believe that the method and the ideas presented in this work also apply to that situation. 

The second one is the extension of the Minkowski Inequalities to the case of complete manifolds with nonnegative Ricci curvature. Proceeding in parallel with the linear theory (compare~\cite[Theorem $1.1$]{Ago_Maz_3} with~\cite[Theorem $1.3$]{Ago_Fog_Maz}), 
one can prove that if the manifold $(M,g)$ is Asymptotically Locally Euclidean and $\Om$ is an outward minimising domain, then it holds
\begin{equation*}
\left({\rm AVR}(g) \, \frac{|\Sf^{n-1}|}{|\pa \Omega|} \right)^{\!\!1/(n-1)} \!\!\!\!\! \leq \,\,\,\,\, \fint\limits_{\pa \Omega}\frac{\HH}{n-1}  \,\, \dd \sigma \, ,
\end{equation*}
where ${\rm AVR}(g)$ stands for the Asymptotic Volume Ratio of $(M,g)$. A detailed proof of this result will appear in a forthcoming manuscript. 

Other challenges include the study of natural geometric inequalities in Cartan-Hadamard manifolds as well as in Asymptotically Hyperbolic manifolds.

%%%%%%%%%%%%%%%%%%%%%%%%%%%%%%%%%%%%%%%%%%%%%%%%%%
%%%%%%%%%%%%%%%%%%%%%%%%%%%%%%%%%%%%%%%%%%%%%%%%%%

\section{Preparatory material}
\label{material}

%%%%%%%%%%%%%%%%%%%%%%%%%%%%%%%%%%%%%%%%%%%%%%%%%%
%%%%%%%%%%%%%%%%%%%%%%%%%%%%%%%%%%%%%%%%%%%%%%%%%%

%%%%%%%%%%%%%%%%%%%%%%%%%%%%%%%%%%%%%%%%%%%%%%%%%%

\subsection{Preliminaries on $p$-capacitary potentials}

%%%%%%%%%%%%%%%%%%%%%%%%%%%%%%%%%%%%%%%%%%%%%%%%%%

We recall the well known notion of $p$-capacity, introducing at the same time a normalised version of it that is suitable for our applications. 

%\begin{comm}use\end{comm} 
%the following definition, that coincides with the classical one,
%\begin{comm}up to the multiplicative constant
%$[(p-1)/(n-p)]^{p-1}$.
%\end{comm}
 
\begin{definition}[$p$-capacity \& normalised $p$-capacity]
\label{nor-pcap}
Let $\Omega\subset\R^n$ be a bounded open set with smooth boundary. 
\begin{itemize}
\item The $p$-capacity of $\Omega$ is defined as 
\begin{equation}
\label{pcap}
\capa_p(\Omega) \, = \, \inf\left\{ \int_{\mathbb{R}^n} \!|\D v|^p\, 
{\rm d}\mu 
\ \bigg\vert \ v\in \mathscr{C}^{\infty}_c(\mathbb{R}^n),\ v\geq 1\ \mbox{on}\ \Omega\right\}.
%\inf\left\{ \int_{\R^n} \abs{ \D f}^p \dd\mu , \quad f \geq \chi_\Omega, \,\, f \in C^\infty_0(\R^n) \right\}.
\end{equation}
\item The normalised $p$-capacity of $\Omega$ is defined as 
\begin{equation}
\label{pcapf}
{\rm C}_p(\Omega) \, = \, \inf\left\{\Big(\frac{p-1}{n-p}\Big)^{p-1}\!\!\frac{1}{\abs{\Sf^{n-1}}}\int_{\mathbb{R}^n} \!|\D v|^p\, 
{\rm d}\mu 
\ \bigg\vert \ v\in \mathscr{C}^{\infty}_c(\mathbb{R}^n),\ v\geq 1\ \mbox{on}\ \Omega\right\}.
\end{equation}
\end{itemize}
\end{definition}
The variational structure of the above definition leads naturally to the formulation of the following problem
\begin{align}
\label{pb}
\left\{\begin{array}{lll}
\Delta_p u=0 & \mbox{in} & \R^n\setminus \overline{\Omega} \, ,\\
\quad \,\,u=1 & \mbox{on} & \partial\Omega \, ,\\
\, u(x)\to 0 & \mbox{as} & |x|\to\infty \, .
\end{array}\right.
\end{align}
It is well known that, for every bounded open set $\Omega$ with smooth boundary and
 every $1<p<n$, problem~\eqref{pb} admits a 
unique weak solution. Such a solution is called the $p$-capacitary potential associated with $\Om$. For the reader's convenience, we recall that a function $v$ is 
a weak solution of $\Delta_p v = 0$ 
in an open set $V$ if $v \in W^{1, p}_{loc} (V)$ and 
\[
\int_V \left \langle \abs{\D v}^{p-2} \D v
\,\Big|\,\D \psi\right\rangle \dd\mu = 0
\] 
%\begin{comm}
%(USIAMO SEMPRE $\langle\cdot|\cdot\rangle$
%INVECE CHE $\langle\cdot,\cdot\rangle$)
%\end{comm}
for any test function 
$\psi \in \mathscr{C}^{\infty}_c (V)$.
%{\begin{comm} MEGLIO DIRE DIRETTAMENTE LA DEFINIZIONE DI SOLUZIONE DEBOLE DEL PROBLEMA~\eqref{pb}\end{comm}}. 
By the important 
contributions~\cite{dibenedetto-p-regularity, evans-p-regularity,lewis-reg} and~\cite{uraltseva},
%\cite{dibenedetto-p-regularity}, \cite{evans-p-regularity}, \cite{lewis-reg}, \cite{uraltseva}, 
we know that weakly $p$-harmonic functions are $\mathscr{C}^{1, \alpha}_{loc}$  
(we are not aware of an explicit formula
relating $\alpha$ to $p$; we note, however, that
such relation cannot be uniform in $p$).
On the other hand, the classical regularity theory for quasilinear nondegenerate elliptic equations (see e.g.~\cite{lady}) ensures that they are analytic  around the points where the gradient does not vanish.
We also recall from \cite{Lie} 
that the $\mathscr{C}^{1, \alpha}$-regularity
can be extended up to the boundary.

Note that the uniqueness of the solution to problem~\eqref{pb}
can be easily proved by suitably applying the
Comparison Theorem for weakly $p$-harmonic functions~\cite[Theorem 2.15]{lindqvist} on large balls of radius $R$, and letting then
$R\to+\infty$.
With the same argument one can also show that
the solution $u$ to problem~\eqref{pb} is such that
$0<u(x)<1$ for every $x\in\R^n\setminus\Omega$.
Finally, we recall that such a solution
realises the infimum in~\eqref{pcapf}.
This can be proved using a standard exhaustion scheme 
(for example the one proposed in~\cite{colesanti-salani})
and invoking the $\mathscr{C}^{1, \alpha}_{loc}$ regularity to guarantee the convergence of the scheme itself.
These facts are summarised in the following theorem.

\begin{theorem}[Existence and regularity of $p$-capacitary potentials]
Let $\Omega\subset\R^n$ be a bounded open set with smooth boundary,
and let $1<p<n$. Then, the following statements hold true:
\begin{itemize}
\item[(i)] There exists a unique weak solution 
$u\in \mathscr{C}^{1, \alpha}_{loc}(\R^{n} \setminus \overline{\Omega})
\cap \mathscr{C}(\R^{n} \setminus\Omega)$ to problem \eqref{pb}.
%(dovrebbe essere pi\`u giusto cos\`i tenendo conto
%della condizione al bordo e del fatto che
%$W^{1, p}_{loc}(\R^{n} \setminus \overline{\Omega})\cap C^{1, \alpha}_{loc}(\R^{n} \setminus \overline{\Omega})
%=C^{1, \alpha}_{loc}(\R^{n} \setminus \overline{\Omega})$, no?) 
\item[(ii)] The solution
$u$ is analytic on the complement 
%of $\crit(u)$, i.e.
at points where $\D u \neq 0$.
\item[(iii)] The solution $u$ fullfills
\begin{equation}
\label{capattained}
{\rm C}_p(\Omega)
\,=\,
\Big(\frac{p-1}{n-p}\Big)^{p-1} \!\frac1{\abs{\Sf^{n-1}}}
\int\limits_{\mathbb{R}^n\setminus\overline{\Omega}} \!\!|\D u|^p\, 
{\rm d}\mu \,,
\end{equation}
where ${\rm C}_p(\Om)$ is the normalised $p$-capacity of $\Om$ defined in~\eqref{pcapf}.
\end{itemize}
\end{theorem}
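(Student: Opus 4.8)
The plan is to realise $u$ as the monotone limit of $p$-capacitary potentials on an exhaustion of $\R^n\setminus\overline\Om$ by annular regions, to pass to the limit using the available $\mathscr{C}^{1,\alpha}$ theory, and then to dispatch uniqueness, analyticity and the capacity identity~\eqref{capattained} as separate points. For the construction, fix $\rho_1$ with $\overline\Om\subset B_{\rho_1}$ and, for $R>\rho_1$, let $u_R$ be the minimiser of $v\mapsto\int_{B_R\setminus\overline\Om}\abs{\D v}^p\dd\mu$ over the $W^{1,p}(B_R\setminus\overline\Om)$ functions with trace $1$ on $\pa\Om$ and $0$ on $\pa B_R$; existence follows from the direct method and uniqueness from strict convexity, and equivalently $u_R$ is the weak solution of $\Delta_p u_R=0$ in $B_R\setminus\overline\Om$ with those boundary values. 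Comparison gives $0\le u_R\le 1$ and, extending $u_R$ by $0$ outside $B_R$, $u_R\le u_{R'}$ whenever $R\le R'$, so $u:=\lim_{R\to\infty}u_R$ exists pointwise. By the interior estimates of~\cite{dibenedetto-p-regularity, evans-p-regularity, lewis-reg, uraltseva} and the boundary estimate of~\cite{Lie}, the family $\{u_R\}$ is bounded in $\mathscr{C}^{1,\beta}(K)$ for some $\beta\in(0,1)$ on each compact $K\subset\R^n\setminus\Om$, and the monotone pointwise convergence then forces $u_R\to u$ in $\mathscr{C}^1_{loc}(\R^n\setminus\Om)$. Hence $u$ is a weak solution of $\Delta_p u=0$ in $\R^n\setminus\overline\Om$, belongs to $\mathscr{C}^{1,\alpha}_{loc}(\R^n\setminus\overline\Om)\cap\mathscr{C}(\R^n\setminus\Om)$, equals $1$ on $\pa\Om$, and satisfies $0\le u\le 1$. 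Comparing each $u_R$ on $B_R\setminus\overline\Om$ with the radial $p$-harmonic function $x\mapsto(\rho_1/\abs{x})^{(n-p)/(p-1)}$, which dominates $u_R$ on $\pa\Om\cup\pa B_R$, yields $u(x)\le(\rho_1/\abs{x})^{(n-p)/(p-1)}$ and thus $u(x)\to 0$ as $\abs{x}\to\infty$. This proves~(i) apart from uniqueness.

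For uniqueness, if $u_1,u_2$ both solve~\eqref{pb} then, given $\e>0$, the decay just proved provides $R_\e$ with $\abs{u_1-u_2}\le\e$ on $\pa B_R$ for every $R\ge R_\e$; the Comparison Theorem~\cite[Theorem 2.15]{lindqvist} applied on $B_R\setminus\overline\Om$ gives $\abs{u_1-u_2}\le\e$ there, and letting $R\to\infty$ and then $\e\to0$ forces $u_1=u_2$. Comparing $u$ with the constants $0$ and $1$ and invoking the strong maximum principle upgrades $0\le u\le 1$ to $0<u<1$ on $\R^n\setminus\overline\Om$. For~(ii), around a point $x_0$ with $\D u(x_0)\neq0$ the equation $\Delta_p u=0$ is uniformly elliptic and nondegenerate with coefficients depending real-analytically on $\D u$; Schauder theory (see~\cite{lady}) upgrades $u$ to $\mathscr{C}^{2,\alpha}$ near $x_0$, a bootstrap gives $\mathscr{C}^\infty$, and Morrey's theorem on solutions of analytic elliptic equations gives real-analyticity near $x_0$.

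For~(iii), set $\capa_p(\Om;B_R):=\inf\{\int_{B_R\setminus\overline\Om}\abs{\D v}^p\dd\mu : v\in W^{1,p}(B_R\setminus\overline\Om),\ v=1\text{ on }\pa\Om,\ v=0\text{ on }\pa B_R\}$, so that its minimiser is exactly $u_R$ and $\capa_p(\Om;B_R)=\int_{B_R\setminus\overline\Om}\abs{\D u_R}^p\dd\mu$. A standard truncation and mollification (one may restrict to $0\le v\le1$ with $v\equiv1$ near $\overline\Om$ and compact support in~\eqref{pcap}) together with a comparison of competitors shows that $R\mapsto\capa_p(\Om;B_R)$ is nonincreasing with $\lim_{R\to\infty}\capa_p(\Om;B_R)=\capa_p(\Om)$, along the lines of the exhaustion scheme of~\cite{colesanti-salani}. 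Since $u_R\to u$ in $\mathscr{C}^1_{loc}$, for each fixed $R'$ one has $\int_{B_{R'}\setminus\overline\Om}\abs{\D u}^p\dd\mu=\lim_R\int_{B_{R'}\setminus\overline\Om}\abs{\D u_R}^p\dd\mu\le\lim_R\capa_p(\Om;B_R)=\capa_p(\Om)$, whence $\int_{\R^n\setminus\overline\Om}\abs{\D u}^p\dd\mu\le\capa_p(\Om)$ upon letting $R'\to\infty$. For the reverse inequality one cuts $u$ off outside $B_R$ against a fixed smooth function equal to $1$ on a neighbourhood of $\overline\Om$; the barrier bound on $u$, combined with interior gradient estimates on the balls $B(x,\abs{x}/2)$, makes $\int_{B_{2R}\setminus B_R}\abs{\D u}^p\dd\mu + R^{-p}\int_{B_{2R}\setminus B_R}\abs{u}^p\dd\mu\to0$, so these cut-offs (after mollification) are admissible competitors with energies tending to $\int_{\R^n\setminus\overline\Om}\abs{\D u}^p\dd\mu$; hence $\capa_p(\Om)\le\int_{\R^n\setminus\overline\Om}\abs{\D u}^p\dd\mu$. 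Multiplying the resulting equality by $\big(\tfrac{p-1}{n-p}\big)^{p-1}\abs{\Sf^{n-1}}^{-1}$ gives~\eqref{capattained}.

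The main obstacle in this scheme will be precisely the last equality in~(iii): ruling out loss of $p$-energy at infinity when one identifies $\int_{\R^n\setminus\overline\Om}\abs{\D u}^p\dd\mu$ with $\capa_p(\Om)$. This is exactly where the explicit radial comparison function and the decay rate $\abs{x}^{(p-n)/(p-1)}$ it forces are indispensable, since they guarantee that $\abs{\D u}^p$ is integrable near infinity with enough room that the far-field cut-offs cost negligible energy; everything else reduces either to elementary variational arguments or to citations of the by-now classical interior and boundary $\mathscr{C}^{1,\alpha}$-regularity and comparison theory for weakly $p$-harmonic functions.
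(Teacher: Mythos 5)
Your proposal is correct and follows the same route the paper indicates: an exhaustion of $\R^n\setminus\overline\Om$ by annuli $B_R\setminus\overline\Om$ together with the direct method (the scheme the paper attributes to~\cite{colesanti-salani}), uniform $\mathscr{C}^{1,\alpha}$ interior and boundary estimates~\cite{dibenedetto-p-regularity, evans-p-regularity, lewis-reg, uraltseva, Lie} to pass to the limit, the comparison theorem~\cite[Theorem 2.15]{lindqvist} on large balls for uniqueness, and nondegenerate elliptic bootstrapping for analyticity away from $\crit(u)$. Your explicit radial barrier giving the decay $u(x)\lesssim|x|^{(p-n)/(p-1)}$, and the ensuing gradient bound $|\D u(x)|\lesssim|x|^{-(n-1)/(p-1)}$ used to rule out energy loss at infinity in part~(iii), are exactly the quantitative facts implicitly needed for the exhaustion scheme to close, and the paper leaves them to the reader.
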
 
Note that since $\partial \Omega$ is assumed to be smooth,
by the Hopf Lemma for $p$-harmonic functions
(see~\cite[Proposition 3.2.1]{tolksdorf}), we have that
$\abs{\D u} \neq 0$ in a neighbourhood of this hypersurface.
In particular, $u$ is analytic in such a neighbourhood. 
Coupled with this fact, the asymptotic expansions below imply that 
$\crit(u)=\big\{x\in\R^n\setminus\overline\Om\,\,\big| \, \,\D u(x)=0\big\}$ 
is a compact subset of $\R^n \setminus \overline{\Omega}$ (generically depending on $p$), and 
in turn that $u$ is analytic outside this set. Finally, it is worth recalling that for $p \neq 2$, the set $\crit(u)$ is {\em a priori} allowed to have full measure.

\begin{lemma}[Asymptotic expansions of $u$ and $\abs{\D u}$]
\label{asyu} 
Let $\Omega\subset\R^n$ be a bounded open set with smooth boundary, and let $1<p<n$. Then, the solution $u$ to~\eqref{pb} satisfies
\begin{itemize}
	\item[(i)] $\lim_{|x|\to+\infty} u(x) \, |x|^{\frac{n-p}{p-1}} \, = \, {\rm C}_p(\Omega)^{\frac{1}{p-1}}$ \!\! ,
	\item[(ii)] $\lim_{|x|\to+\infty} |\D u(x)| \, |x|^{\frac{n-1}{p-1}} \, = \, \big(\frac{n-p}{p-1}\big) \,  {\rm C}_p(\Omega)^{\frac{1}{p-1}}$ \!\! ,
\end{itemize}
where ${\rm C}_p(\Om)$ is the normalised $p$-capacity of $\Om$ defined in~\eqref{pcapf}. 
In particular, $\crit (u)$ is a compact 
subset of $\R^n \setminus \overline{\Omega}$, possibly with full measure.
\end{lemma}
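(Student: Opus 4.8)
The plan is to establish the two asymptotic expansions first, and then read off the compactness of $\crit(u)$ as an immediate consequence. The natural strategy for (i) and (ii) is a Kelvin-type comparison argument combined with the known structure of $p$-harmonic functions near infinity. First I would recall that the radial $p$-harmonic function on $\R^n \setminus \{0\}$ is $x \mapsto c\,|x|^{-(n-p)/(p-1)}$ for a constant $c$, since $\Delta_p$ applied to $|x|^{-(n-p)/(p-1)}$ vanishes away from the origin; these are precisely the candidate asymptotic profiles. The constant in front must be $\capa_p$-related, and given Definition~\ref{nor-pcap} and the identity~\eqref{capattained} one checks that the correct normalisation is $c = {\rm C}_p(\Omega)^{1/(p-1)}$. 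To make this rigorous, I would fix small $\epsilon>0$ and use the comparison principle (\cite[Theorem 2.15]{lindqvist}) on the exterior domain $\R^n \setminus B_R$ for large $R$: one sandwiches $u$ between $(1\pm\epsilon)\, {\rm C}_p(\Omega)^{1/(p-1)} |x|^{-(n-p)/(p-1)}$, using as boundary data on $\partial B_R$ the already-known decay of $u$ (which follows, say, from the capacitary variational characterisation and a Harnack/barrier argument near infinity) and the vanishing at infinity built into~\eqref{pb}. Letting $R \to \infty$ and then $\epsilon \to 0$ yields (i).

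For (ii), the cleanest route is to upgrade the $\mathscr{C}^0$ estimate of (i) to a gradient estimate. Since $u$ is bounded above and below by comparison functions that are asymptotic to one another at first order, on each dyadic annulus $B_{2R}\setminus B_R$ one rescales $u$ to $u_R(y) = R^{(n-p)/(p-1)} u(Ry)$, which is $p$-harmonic on a fixed annulus and converges uniformly to the radial profile ${\rm C}_p(\Omega)^{1/(p-1)} |y|^{-(n-p)/(p-1)}$ by (i). By the $\mathscr{C}^{1,\alpha}_{\loc}$ interior estimates for $p$-harmonic functions (\cite{dibenedetto-p-regularity, evans-p-regularity, lewis-reg, uraltseva}), the gradients converge too, so $\D u_R \to \D\big({\rm C}_p(\Omega)^{1/(p-1)}|y|^{-(n-p)/(p-1)}\big)$ locally uniformly; evaluating at $|y|=1$ and scaling back gives exactly statement (ii), since $|\D(|y|^{-(n-p)/(p-1)})| = \tfrac{n-p}{p-1}$ at $|y|=1$. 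The key point here is that the convergence of the rescalings is in $\mathscr{C}^1$, not merely $\mathscr{C}^0$, which the De Giorgi--Nash--Moser-type estimates for the $p$-Laplacian supply on the nondegenerate region; and away from $\infty$ there is no degeneracy issue because the limit profile has nonvanishing gradient, so uniform ellipticity of the linearised equation is available on the annulus for $R$ large.

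Finally, the compactness of $\crit(u)$: by (ii) we have $|\D u(x)| \geq \tfrac12 \big(\tfrac{n-p}{p-1}\big) {\rm C}_p(\Omega)^{1/(p-1)} |x|^{-(n-1)/(p-1)} > 0$ for all $|x| \geq R_0$ with $R_0$ sufficiently large, so $\crit(u) \subset B_{R_0}$. On the other hand, by the Hopf Lemma for $p$-harmonic functions (\cite[Proposition 3.2.1]{tolksdorf}) and smoothness of $\partial\Omega$, $|\D u| \neq 0$ in a neighbourhood of $\partial\Omega$; hence $\crit(u)$ is a closed subset of $\R^n\setminus\overline\Omega$ contained in $B_{R_0}$ and bounded away from $\partial\Omega$, i.e.\ compact. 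The statement that it may have full (Lebesgue) measure is not something to be proved here — it is merely a cautionary remark recalling that, for $p \neq 2$, analyticity fails and the standard argument excluding large critical sets in the harmonic case is unavailable; I would simply point to this phenomenon rather than construct an example.

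I expect the main obstacle to be the rigorous justification of the $\mathscr{C}^1$-convergence of the rescaled functions — specifically, ensuring the interior gradient estimates are applied on the correct (nondegenerate) scale and that the error terms in the comparison sandwich are genuinely $o(1)$ uniformly on annuli as $R\to\infty$. This is where one must be careful that the constant $\alpha$ in the $\mathscr{C}^{1,\alpha}$ theory, though not explicit in $p$, can still be used qualitatively, since we only need convergence of gradients, not a quantitative rate. Everything else — the comparison principle step for (i), the Hopf Lemma input, and the elementary deduction of compactness — is routine.
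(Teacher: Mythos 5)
The paper does not prove this lemma at all: it cites Kichenassamy--V\'eron and Poggesi for the asymptotic expansions, then deduces the compactness of $\crit(u)$ in one line. Your proposal is a self-contained attempt, so comparison is between your argument and the cited results, not against a paper proof.

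The argument for (ii), assuming (i), is sound: rescale $u_R(y) = R^{(n-p)/(p-1)} u(Ry)$, use uniform interior $\mathscr{C}^{1,\alpha}$ estimates on a fixed annulus to extract $\mathscr{C}^1$-convergence along subsequences, and observe that the $\mathscr{C}^0$ limit is forced by (i), so the gradient limit is unambiguous. The compactness of $\crit(u)$ from (ii) plus the Hopf lemma is also fine, and you are right that the ``possibly full measure'' clause is a remark, not something to prove.

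The gap is in (i), and it is the central one. The step ``one sandwiches $u$ between $(1\pm\epsilon)\,{\rm C}_p(\Omega)^{1/(p-1)}|x|^{-(n-p)/(p-1)}$ using as boundary data on $\partial B_R$ the already-known decay of $u$'' is circular as stated. A Harnack/barrier argument yields only two-sided bounds of the form $c_1 |x|^{-(n-p)/(p-1)} \leq u(x) \leq c_2|x|^{-(n-p)/(p-1)}$ with a priori unrelated constants $c_1 < c_2$; it does not tell you that $u$ restricted to $\partial B_R$ is within a factor $(1\pm\epsilon)$ of the specific profile ${\rm C}_p(\Omega)^{1/(p-1)}R^{-(n-p)/(p-1)}$. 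Without that, the comparison principle only reproduces the two-sided bound you started from. The genuinely nontrivial content of Kichenassamy--V\'eron is precisely the existence of $\lim_{|x|\to\infty} u(x)|x|^{(n-p)/(p-1)}$: establishing that the $\limsup$ and $\liminf$ coincide requires an oscillation-decay argument (a Harnack inequality iterated across dyadic annuli together with a suitable monotonicity), and your sketch does not supply it. Moreover, even once the limit exists, identifying it with ${\rm C}_p(\Omega)^{1/(p-1)}$ needs a flux computation on large spheres, which in turn uses (ii); the flux identity of Lemma~\ref{id-cap} cannot be invoked here because its proof in the paper already uses item (ii) of this very lemma. So the logical order must be: existence of the limit for (i) with some constant $c$, then (ii) with that same $c$ via blow-down, then identification $c = {\rm C}_p(\Omega)^{1/(p-1)}$ by letting $R \to \infty$ in the flux. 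Your proposal has the first of these three steps missing and glosses the third.
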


For the proof of this lemma we refer the reader to~\cite{kikhenassamy} (see also the more recent~
\cite[Lemma 2.3 and (2.2)]{poggesi} for a precise statement). It is also worth mentioning~\cite{Ga_Sa}, where 
similar expansions are employed to infer rotational symmetry of starshaped domains supporting a solution to problem~\eqref{pb} with constant normal derivative on the boundary. From the point of view of the present paper, the main implication of the above lemma is the computation of the limit
\begin{equation}
\label{eq:limup}
\lim_{\tau \to 0^+} U_p(\tau) \,\, = \,\,\Big(\frac{n-p}{p-1}\Big)^{\!p}\,  \abs{\Sf^{n-1}}\,\, {\rm C}_p (\Omega)^{\frac{n-p-1}{n-p}} \, ,\end{equation}
where $\tau \mapsto U_p (\tau)$ is the function defined in~\eqref{eq:p-mon} (see~\cite[Lemma 2.6]{Fog_Maz_Pin}). The following characterization of the $p$-capacity of $\Omega$ is widely used in the literature and it is also very useful for our purposes. Hence, we provide a proof.

\begin{lemma}
\label{id-cap}
Let $\Omega\subset\R^n$ be a bounded open set with smooth boundary, 
and let $1<p<n$. Then, the solution $u$ to~\eqref{pb} satisfies
\begin{equation}
\label{cap-rel}
{\rm C}_p(\Omega)
\,\,=\, \, 
\Big(\frac{p-1}{n-p}\Big)^{p-1}\!\frac1{\abs{\Sf^{n-1}}}
\int\limits_{\partial \Omega} \, \abs{\D u}^{p-1} \dd\sigma \, ,
\end{equation}
where ${\rm C}_p(\Om)$ is the normalised $p$-capacity of $\Om$ defined in~\eqref{pcapf}.
\end{lemma}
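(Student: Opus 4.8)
\textbf{Proof plan for Lemma~\ref{id-cap}.}

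The plan is to establish the identity~\eqref{cap-rel} by relating the volume integral $\int_{\R^n\setminus\overline\Omega}|\D u|^p\,\dd\mu$ appearing in~\eqref{capattained} to the boundary integral $\int_{\partial\Omega}|\D u|^{p-1}\,\dd\sigma$. The natural tool is the divergence theorem applied to the vector field $|\D u|^{p-2}\D u$ on the region $\Omega_R=B_R\setminus\overline\Omega$, for $R$ large enough that $\Omega\Subset B_R$. First I would record the elementary identity $\dive\!\big(u\,|\D u|^{p-2}\D u\big)=u\,\Delta_p u+|\D u|^p=|\D u|^p$, valid away from $\crit(u)$ and, by the $\mathscr{C}^{1,\alpha}_{loc}$ regularity together with the weak formulation, in the distributional sense on all of $\R^n\setminus\overline\Omega$ (here one uses that $u$ is a weak solution and that $u\,|\D u|^{p-2}\D u$ is a legitimate object to integrate by parts against, approximating $u$ by cutoffs if necessary). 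Integrating over $\Omega_R$ and using $u\equiv 1$ on $\partial\Omega$ gives
\begin{equation}
\int\limits_{\Omega_R}|\D u|^p\,\dd\mu \, = \, \int\limits_{\partial B_R} u\,|\D u|^{p-2}\,\langle \D u\,|\,\n\rangle\,\dd\sigma \, - \, \int\limits_{\partial\Omega}|\D u|^{p-2}\,\langle \D u\,|\,\n\rangle\,\dd\sigma \, ,
\end{equation}
where $\n$ is the outer normal to $\Omega_R$; note that on $\partial\Omega$ this outer normal points \emph{into} $\Omega$, and since $u$ attains its maximum value $1$ on $\partial\Omega$ one has $\D u = -|\D u|\,\n$ there, so the last term equals $+\int_{\partial\Omega}|\D u|^{p-1}\,\dd\sigma$ with a favourable sign.

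Next I would show that the boundary term over $\partial B_R$ vanishes as $R\to+\infty$. This is where Lemma~\ref{asyu} enters: from (i) and (ii) we have $u=O(R^{-(n-p)/(p-1)})$ and $|\D u|=O(R^{-(n-1)/(p-1)})$ on $\partial B_R$, hence the integrand is $O\big(R^{-(n-p)/(p-1)}\cdot R^{-(n-1)(p-1)/(p-1)}\big)=O\big(R^{-(n-p)/(p-1)-(n-1)}\big)$, and multiplying by $|\partial B_R|=\abs{\Sf^{n-1}}R^{n-1}$ leaves $O\big(R^{-(n-p)/(p-1)}\big)\to 0$ since $p<n$. Therefore, letting $R\to+\infty$,
\begin{equation}
\int\limits_{\R^n\setminus\overline\Omega}|\D u|^p\,\dd\mu \, = \, \int\limits_{\partial\Omega}|\D u|^{p-1}\,\dd\sigma \, .
\end{equation}
Combining this with~\eqref{capattained} yields~\eqref{cap-rel} immediately.

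\textbf{Main obstacle.} The routine steps are the asymptotic decay estimate and the sign bookkeeping on $\partial\Omega$; the only genuinely delicate point is justifying the integration by parts in the presence of the critical set $\crit(u)$, which may have full measure and on which $|\D u|^{p-2}\D u$ need not be differentiable in the classical sense when $p<2$. I expect this to be handled by working with the weak formulation directly: one tests the equation $\Delta_p u=0$ against $u\,\chi_R$, where $\chi_R$ is a smooth cutoff supported in $B_R$, equal to $1$ on $B_{R/2}$; since $u\in W^{1,p}_{loc}$ and is continuous up to $\partial\Omega$, this is an admissible manipulation after a standard approximation of $u$ near $\partial\Omega$ by functions agreeing with its boundary value, and the critical set causes no trouble because $|\D u|^{p-2}\D u\in L^{p/(p-1)}_{loc}$ is all that the weak formulation requires — no pointwise second-order information on $u$ is ever needed. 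The boundary integral over $\partial\Omega$ is then well defined thanks to the $\mathscr{C}^{1,\alpha}$-regularity up to the boundary and the Hopf Lemma, which guarantees $|\D u|\neq 0$ there.
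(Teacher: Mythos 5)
Your argument is correct and reaches the same conclusion, but it handles the critical set by a genuinely different mechanism than the paper. The paper excises an $\epsilon$-tubular neighbourhood $V_\epsilon$ of $\crit(u)$, applies the classical divergence theorem on $\{u\geq t\}\setminus V_\epsilon$ (which requires $u$ to be analytic there and invokes the Sard-type property of Alberti--Bianchini--Crippa to make sense of $\partial V_\epsilon$), and then lets the excised region shrink, using the continuity of $|\D u|^{p-1}$ to kill the boundary term on $\partial V_\epsilon$. You instead never excise anything: you observe that the identity $\dive\!\big(u\,|\D u|^{p-2}\D u\big)=|\D u|^p$ holds in the distributional sense on all of $\R^n\setminus\overline\Omega$ — a short computation from the weak formulation using $u\psi$ as a test function — and then apply the divergence theorem to the continuous vector field $u\,|\D u|^{p-2}\D u$ with $L^1$ distributional divergence on the smooth domain $B_R\setminus\overline\Omega$. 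This is cleaner: it avoids the Sard-type reference altogether and makes transparent that $\crit(u)$ is irrelevant because only first-order (distributional) information on $u$ is used. The two proofs also differ in the choice of exhaustion (level sets $\{u\geq t\}$ versus balls $B_R$), but both rely on exactly the same asymptotics from Lemma~\ref{asyu} to kill the far boundary term. One small bookkeeping slip you should fix: with $\n$ the outward normal to $\Omega_R=B_R\setminus\overline\Omega$, the divergence theorem gives a \emph{plus} sign in front of the $\partial\Omega$ integral, and since $u$ attains its maximum on $\partial\Omega$ one has $\D u=+|\D u|\,\n$ there (the gradient points towards the maximum, i.e.\ into $\Omega$, which is the direction of $\n$); your two sign errors cancel, so the final formula is right, but as written both intermediate claims are reversed.
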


\begin{proof}
For $\epsilon > 0$, let $V_\epsilon$ be 
the $\ep$-tubular neighbourhood of $\crit(u)$, namely
\[
V_\ep
\,=\,
\Big\{x\in\R^n\setminus\overline\Omega \,\, \Big| \,\, {\rm dist}
\big(x,\crit (u)\big)<\ep
\Big\} \, ,
\]
where ${\rm dist}\big(x,\crit (u)\big)$ is the Euclidean distance of 
$x$ from $\crit (u)$.
%(ho scritto $V_\ep$ esplicitamente e con una notazione
%diversa da $U_\ep$ per distinguerlo dai pigiami. Potremmo 
%ad un certo punto rimarcare come questi intorni
%pi\`u standard, a differenza dei nostri pigiami,
%non funzionerebbero per la dim. dei nostri risultati.)
By the compactness of $\crit(u)$
in $\R^n\setminus\overline\Omega$,
we have that $V_\epsilon \subset
\{u\geq t\}$,
for every $\ep>0$ and $t>0$ small enough.
Since $\abs{\D u} = 0$ on $\crit (u)$ by definition, 
we have from identity \eqref{capattained} and by 
Monotone Convergence Theorem that
\begin{equation*}
\Big(\frac{n-p}{p-1}\Big)^{\!p-1}\!\abs{\Sf^{n-1}} \, {\rm C}_p (\Omega) 
%\,= \int\limits_{\R^n \setminus\overline\Omega} \abs{\D u}^{p} \dd\mu 
\,\,= \, \lim_{\epsilon \to 0^+}\lim_{t\to 0^+} \!\!\!  \int\limits_{
\{u \geq t\}\setminus V_\epsilon
}\!\!\!\abs{\D u}^{p} \dd\mu \, .
\end{equation*}
By the discussions above, 
$u$ is analytic --  
and in turn $p$-harmonic in the classical sense --  
in the set 
$\{u \geq t\}\setminus V_\epsilon$.
Therefore, for $\ep$ and $t$ small enough,
the Divergence Theorem yields
\begin{align*}
\int\limits_{\{u \geq t\}\setminus V_\epsilon} 
\!\!\abs{\D u}^{p} \dd\mu 
&\,=\!\!\! 
\int\limits_{\{u \geq t\}\setminus V_\epsilon} 
\!\!{\rm div}\left(u \, |\D u|^{p-2}\D u\right)\dd\mu \, = \,
\\
&\,=\, 
-\,t\!\!\!\int\limits_{\{u=t\}}\!\!\!\abs{\D u}^{p-1} \dd\sigma 
\,+\, 
\int\limits_{\partial \Omega} \abs{\D u}^{p-1} 
\dd\sigma 
\,+\, 
\int\limits_{\partial V_\epsilon}\!\!
u\,\abs{\D u}^{p-2}
\left\langle \D u\,|\,\nu\right\rangle\dd\sigma \, ,
\end{align*}
where $\nu$ is the
inward unit normal to $V_\epsilon$. 
Observe that 
$\nu$ is well defined 
almost everywhere on 
$\partial V_\epsilon$ and for almost every 
$\epsilon>0$, 
in view of the Sard-type property for Lipschitz functions 
proved in \cite{Alb_Bia_Cri}. 
Letting $t \to 0^+$ the integral on $\{u=t\}$ tends to $0$ by the asymptotic expansion
(ii) of Lemma \ref{asyu}, while letting $\epsilon \to 0^+$ the integral on $\partial V_\epsilon$ tends to $0$ since $\abs{\D u}^{p-1} (x)$ vanishes as $x$ approaches $\crit(u)$
and since $0\leq u\leq 1$.
\end{proof}

\bigskip

\noindent In the following subsection as well as in the remaining part of the paper we will always assume that $1<p<n$, unless otherwise stated.

\bigskip

%%%%%%%%%%%%%%%%%%%%%%%%%%%%%%%%%%%%%%%%%%%%%%%%%%

\subsection{The conformal setting}
\label{conformalsetting}

%%%%%%%%%%%%%%%%%%%%%%%%%%%%%%%%%%%%%%%%%%%%%%%%%

As shown in \cite{Ago_Maz_3, Ago_Maz_2, Ago_Fog_Maz} and \cite{Fog_Maz_Pin}, 
it is very convenient to work in the conformally related Riemannian manifold $(\R^n \setminus \Omega, g)$, where $g$ is given by 
\begin{equation}
\label{g}
g \, = \, u^{2\big(\tfrac{p-1}{n-p}\big)}g_{\R^n}\, .
\end{equation}
In this setting it is also convenient to consider the new variable
\begin{equation}
\label{phi}
\phi\, = \,  - \, \frac{(p-1)(n-2)}{(n-p)} \,  \log u \, .
\end{equation}
By the same formal computations as in~\cite{Fog_Maz_Pin}, 
the boundary value problem~\eqref{pb} translates in terms of $g$ and $\phi$ as 
\begin{equation}
\label{prob_ex_rif}
\left\{
\begin{array}{rcll}
\displaystyle
\phantom{\frac12}\Delta_{p}^g \phi \!\!\!\!&=&\!\!\!\!0 & {\rm in }\quad
\R^n\setminus\overline\Om,\\
\displaystyle
\ric_g - \nabla\nabla\phi 
+\frac{d\phi\otimes d\phi}{n-2}
\!\!\!\!&=&\displaystyle\!\!\!\!
\left(\frac{|\nabla\phi|^2_g}{n-2}
-\Big(\frac{p-2}{n-2}\Big)
\frac{\nabla\nabla\phi(\nabla\phi,\nabla\phi)}{|\nabla\phi|_g^2}
\right)g & {\rm in }\quad
\big(\R^n\setminus\overline\Om\big){\setminus}\crit(\phi),\\
\displaystyle
 \phantom{\frac12}\phi\!\!\!\!&=&\!\!\!\!0  &{\rm on }\ \ \pa\Om,\\
\displaystyle
\phantom{\frac12}\phi(x)\!\!\!\!&\to&\!\!\!\!+\infty & \mbox{as }\ x\to\infty.
\end{array}
\right.
\end{equation}
Here, $\nabla$ is the Levi-Civita connection of $g$, $\nabla\nabla$ the Hessian operator, and $\Delta_p^g$ is the $p$-Laplace operator computed with respect to the metric $g$, explicitly defined as
\[
\Delta_p^g \phi \,  = \, \dive_{\!g}\big(\abs{\nabla \phi}^{p-2} \nabla \phi\big) \, ,
\]  
where $\dive_{\!g}$ is the divergence computed with respect to $g$. A very useful tool in the study of $p$-harmonic functions is the Kato-type identity, introduced in~\cite[Proposition 4.4]{Fog_Maz_Pin}.
For the reader's convenience, we recall its precise statement
in the following proposition.

\begin{proposition}[Kato-type identity \& orthogonal decomposition]
\label{kato}
 Let $(M, g)$ be a Riemannian manifold, and let $\phi$ be a $p$-harmonic function on $M$. Then, at any point where $\abs{\nabla \phi} \neq 0$, the following identity holds true
\begin{equation}
\label{katof}
\begin{split}
\hspace{-0.3cm}\abs{\nabla\nabla \phi}^2 - \bigg(1 + \frac{(p-1)^2}{n-1}\bigg) \bigabs{\nabla\abs{\nabla \phi}}^2 \!\! = &\,\,\abs{\nabla \phi}^2 \, \bigabs{ \, \hh - \frac{\HH}{n-1}\,g^{\!\top} \, }^2 
%\\  
%& 
\!\!+\bigg(1 - \frac{(p-1)^2}{n-1}\bigg) \bigabs{\nabla^{\top}\abs{\nabla \phi}}^2, 
\end{split}
\end{equation}
where $\hh$ and $\HH$ are respectively the second fundamental form and the mean curvature of the level sets of $\phi$ with respect to the unit normal $\nabla \phi /\abs{\nabla \phi}$, and $g^{\!\top}$ is the metric induced by $g$ on the level sets of $\phi$. Finally, for a given differentiable function $f$, we agree that $\nabla^\top \! f$ indicates the tangential part of the gradient, according to the orthogonal decomposition
\begin{equation}
\nabla^\perp f \, = \, \bigg\langle \nabla f \, 
\bigg| \, \frac{\nabla \phi}{|\nabla\phi|} \bigg\rangle
\frac{\na\phi}{|\na\phi|} 
\qquad \hbox{and} \qquad 
\nabla^\top \!f \, = \, \nabla f - \nabla^\perp f \,.
\end{equation}
In particular, the following formula holds true
\begin{equation}
\label{decomposition-intro}
\big\vert \nabla\abs{\nabla \phi} \big\vert^2   =  \,\,\big\vert \nabla^{\top} \abs{\nabla \phi} \big\vert^2 + \,  \big\vert \nabla^{\perp} \abs{\nabla \phi} \big\vert^2  .
\end{equation}
\end{proposition}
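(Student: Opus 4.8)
\section*{Proof proposal}

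The plan is to reduce~\eqref{katof} to a purely pointwise computation in a frame adapted to the level sets of $\phi$, following the scheme of~\cite[Proposition 4.4]{Fog_Maz_Pin}. Fix a point $x_0$ at which $\abs{\nabla\phi}\neq 0$ and choose a local $g$-orthonormal frame $\{e_1,\dots,e_n\}$ with $e_n=\nabla\phi/\abs{\nabla\phi}$ and $e_1,\dots,e_{n-1}$ tangent to the level set through $x_0$. Writing $\phi_{ij}=\nabla\nabla\phi(e_i,e_j)$, the first step is to record the dictionary between the geometric quantities appearing in~\eqref{katof} and the Hessian entries at $x_0$: one has $\abs{\nabla\phi}\,\hh_{ij}=\phi_{ij}$ for $i,j\le n-1$, hence $\abs{\nabla\phi}\,\HH=\sum_{i=1}^{n-1}\phi_{ii}$, whereas $e_k(\abs{\nabla\phi})=\phi_{nk}$, so that $\bigabs{\nabla^\top\!\abs{\nabla\phi}}^2=\sum_{k=1}^{n-1}\phi_{nk}^2$ and $\bigabs{\nabla^\perp\!\abs{\nabla\phi}}^2=\phi_{nn}^2$. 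In particular~\eqref{decomposition-intro} is nothing but the Pythagorean identity $\sum_{k=1}^{n}\phi_{nk}^2=\sum_{k=1}^{n-1}\phi_{nk}^2+\phi_{nn}^2$ and requires no separate argument.

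The equation is used at exactly one point: expanding $\Delta_p^g\phi=0$ at $x_0$ gives $\Delta_g\phi+(p-2)\phi_{nn}=0$, which, since $\Delta_g\phi=\abs{\nabla\phi}\HH+\phi_{nn}$ in the chosen frame, is equivalent to
\begin{equation}
\phi_{nn}\,=\,-\,\frac{\abs{\nabla\phi}\,\HH}{p-1}\, .
\end{equation}
This is the only place where $p$-harmonicity enters, and it is responsible for the appearance of the coefficient $(p-1)^2/(n-1)$ in~\eqref{katof}.

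It then remains to expand both sides in the adapted frame and compare. Splitting $\abs{\nabla\nabla\phi}^2=\sum_{i,j\le n-1}\phi_{ij}^2+2\sum_{k\le n-1}\phi_{nk}^2+\phi_{nn}^2$, using $\sum_{i,j\le n-1}\phi_{ij}^2=\abs{\nabla\phi}^2\abs{\hh}^2$ and the trace-free identity $\bigabs{\hh-\tfrac{\HH}{n-1}g^\top}^2=\abs{\hh}^2-\tfrac{\HH^2}{n-1}$ (recall that the level sets are $(n-1)$-dimensional), the left-hand side of~\eqref{katof} reduces to $\abs{\nabla\phi}^2\bigabs{\hh-\tfrac{\HH}{n-1}g^\top}^2+\bigl(1-\tfrac{(p-1)^2}{n-1}\bigr)\sum_{k\le n-1}\phi_{nk}^2$ plus the remainder $\tfrac{1}{n-1}\bigl(\abs{\nabla\phi}^2\HH^2-(p-1)^2\phi_{nn}^2\bigr)$, and this remainder vanishes precisely because of the relation derived above. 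This produces the right-hand side of~\eqref{katof}.

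Since every manipulation is carried out at the single point $x_0$ and involves only the Hessian together with the first-order quantities $\hh$ and $\HH$, no curvature tensor of $g$ is ever invoked --- which is exactly why the identity is of ``Kato type'' and holds on an arbitrary Riemannian manifold. No serious obstacle arises: the only care needed is the dimensional bookkeeping for the trace-free part of $\hh$, and since~\eqref{katof} is an exact identity rather than an inequality, there is nothing to optimize once the algebra is carried out.
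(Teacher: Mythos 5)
The paper does not actually prove this proposition: it simply cites it as~\cite[Proposition~4.4]{Fog_Maz_Pin} and restates it for the reader's convenience, so there is no in-paper argument to compare against. Your self-contained proof is nevertheless correct, and it is the natural route: choose an orthonormal frame adapted to the level set at a regular point $x_0$, with $e_n=\nabla\phi/\abs{\nabla\phi}$, translate $\hh$, $\HH$, $\nabla^\top\abs{\nabla\phi}$ and $\nabla^\perp\abs{\nabla\phi}$ into Hessian entries $\phi_{ij}$, and use $p$-harmonicity in the form $\Delta_g\phi+(p-2)\phi_{nn}=0$, i.e.\ $(p-1)\phi_{nn}=-\abs{\nabla\phi}\HH$, to kill the residual term $\tfrac{1}{n-1}\bigl(\abs{\nabla\phi}^2\HH^2-(p-1)^2\phi_{nn}^2\bigr)$. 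The dictionary is exactly right: $\abs{\nabla\phi}\,\hh_{ij}=\phi_{ij}$, $e_k(\abs{\nabla\phi})=\phi_{nk}$, $\sum_{i,j\le n-1}\phi_{ij}^2=\abs{\nabla\phi}^2\abs{\hh}^2$, and the trace-free identity $\bigabs{\hh-\tfrac{\HH}{n-1}g^{\!\top}}^2=\abs{\hh}^2-\tfrac{\HH^2}{n-1}$ uses $\abs{g^{\!\top}}^2=n-1$ correctly. The orthogonal decomposition~\eqref{decomposition-intro} is indeed just $\sum_k\phi_{nk}^2=\sum_{k<n}\phi_{nk}^2+\phi_{nn}^2$. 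One could remark that the sign convention for $\hh$ is immaterial because only $\phi_{nn}^2$ and $\abs{\hh-\tfrac{\HH}{n-1}g^{\!\top}}^2$ appear, and that the identity is pointwise and curvature-free, but you already made both observations. Nothing is missing.
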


Since the proof of the $L^p$-Minkowski Inequality outlined in Subsection~\ref{sub:level} will be carried out in the conformal setting described above, the fundamental conditions~\eqref{eq:crucial} need to be rephrased accordingly. It is then worth introducing the following definition.
\begin{definition}[The function $\Phi_p$]
\label{phipdef}
For any $1< p <n$, let $g$ and $\phi$ be the solutions to~\eqref{prob_ex_rif} obtained by the solution to~\eqref{pb} through~\eqref{g} and~\eqref{phi}. We define the function $\Phi_p: [0, +\infty)  \to \R$ by
\begin{equation}
\label{phip}
\Phi_p(s) 
\,\,= \!\!\!
\int\limits_{\{\phi = s\} }
\!\!\abs{\nabla \phi}_g^{p} \, \dd\sigma_{\!g} \, ,
\end{equation}
where $\dd\sigma_{\!g}$ is the area element induced by the ambient measure $\dd \mu_{g}$ on the given level set. We agree that 
\begin{equation*}
\int\limits_{\{ \phi = s \} } \!\!\! |\nabla \phi |_g^p \,\dd \sigma_{\!g} \,\,\,\,= \!\!\!\!\!\!\!\!\!\!
\int\limits_{\{ \phi = s \} \setminus \crit(\phi)} \!\!\!\!\!\!\!\!\!\! |\nabla \phi |_g^p \,\dd \sigma_{\!g}\, ,
\end{equation*}
whenever a critical value is involved.

\end{definition}

We conclude this subsection, recalling some of the relevant properties of the function $\Phi_p$ just introduced. Their proofs are basically immediate -- as they follows from the analogous properties of the corresponding function $U_p$, defined in~\eqref{eq:p-mon} -- and are left to the reader.
\begin{itemize}
\item The function $\Phi_p$ is bounded at infinity. Moreover, it follows from~\eqref{eq:limup} that 
\begin{equation}
\label{limphi}
\lim_{s \to +\infty} \Phi_p (s) \,\, = \,\,  \Big(\frac{n-p}{p-1}\Big)^{\!p}\,  \abs{\Sf^{n-1}}\,\, {\rm C}_p (\Omega)^{\frac{n-p-1}{n-p}} \, .
%{\rm C}_p (\Omega)^{\frac{n-p-1}{n-p}} \left(\frac{n-p}{p-1}\right)^{\!p} \abs{\Sf^{n-1}} \, .
\end{equation}

\smallskip

\item The function $\Phi_p$ is differentiable at the regular values of $\phi$.

\smallskip

\item In terms of $\Phi_p$, the effective inequalities~\eqref{eq:crucial} correspond to 
\begin{equation}
\label{crucial_phi}
\Phi_p'(0)\,\leq\,0
\qquad\mbox{and}\qquad
\lim_{s\to +\infty}\Phi_p (s) \leq \Phi_p(0) \,.
\end{equation}
In fact it is easily seen that 
\begin{equation} 
\label{upfip}
\,\,\,\,\,\,\,\qquad \Phi_p(s) \, = \, U_p \Big( {\rm e}^{-\frac{(n-p)}{(p-1)(n-2)}s}  \Big) \quad\mbox{and}\quad 
\Phi_p'(s) 
\, = \, 
-\frac{(n-p)}{(p-1)(n-2)} \, U_p' \Big( {\rm e}^{-\frac{(n-p)}{(p-1)(n-2)}s}  \Big)
\end{equation}
whenever these objects are well defined.
\end{itemize}
The inequalities~\eqref{crucial_phi} are at the core of our analysis and will be deduced in Section~\ref{proof}, as consequences of our {\em effective monotonicity fomulas} (see Theorem~\ref{monotone1} and Theorem~\ref{monotone2}).

\section{Proof of the $L^p$-Minkowski Inequality}
\label{proof}

%%%%%%%%%%%%%%%%%%%%%%%%%%%%%%%%%%%%%%%%%%%%%%%%%%
%%%%%%%%%%%%%%%%%%%%%%%%%%%%%%%%%%%%%%%%%%%%%%%%%%

The aim of this section is to give a complete proof of Theorem~\ref{premink}, namely the $L^p$-Minkowski Inequality
\begin{equation*}
{\rm C}_p (\Omega)^{\frac{n-p-1}{n-p}}  \leq \,  \frac{1}{\,  \abs{\Sf^{n-1}} \,} \, \int\limits_{\partial \Omega} \,\left\vert \frac{\HH}{n-1}\right\vert^{p}\! \dd\sigma  \, .
\end{equation*}
In force of the discussion in Subsection~\ref{sub:level} (see also Subsection~\ref{completion} below for a fully detailed proof), it is sufficient to establish the validity of the inequalities~\eqref{eq:crucial} in their conformal version~\eqref{crucial_phi}
\begin{equation*}
%\label{crucial_phi}
\Phi_p'(0)\,\leq\,0
\qquad\mbox{and}\qquad
\Phi_p (+ \infty) = \!\lim_{s\to +\infty}\Phi_p (s) \, \leq \, \Phi_p(0) \,.
\end{equation*}

\bigskip

\noindent {\em Since all the computations of this section will be performed in the conformally related setting, the subscript $g$ will be dropped from the notations.}

\bigskip

%%%%%%%%%%%%%%%%%%%%%%%%%%%%%%%%%%%%%%%%%%%%%%%%%%

\subsection{First effective inequality: $\Phi_p'(0) \leq 0$.}

%%%%%%%%%%%%%%%%%%%%%%%%%%%%%%%%%%%%%%%%%%%%%%%%%%

For a given $1<p<n$, let us consider the vector field 
\begin{equation}
\label{X}
X \, = \, {\rm e}^{-\frac{(n-p)}{(n-2)(p-1)}\phi}\left\vert \nabla \phi\right\vert^{p-2}\Bigg(\nabla\left\vert\nabla \phi\right\vert + (p-2) \nabla^{\perp} |\nabla \phi|
\Bigg) \,.
\end{equation}
As it can be readily checked, at a regular value of $\phi$ one has that 
\begin{equation}
\label{der_fip}
 {\rm e}^{\! -\frac{(n-p)}{(n-2)(p-1)}s} \,\,   {\Phi_p'(s)} \,\, = \,\,  \frac{1}{p-1} \!\! \int\limits_{\{\phi = s\}}   \!\!\! \left\langle \! X \, \bigg\vert \, \frac{\nabla \phi}{\abs{\nabla \phi}}\right\rangle \, \dd\sigma \, .
\end{equation}
In the next lemma, we compute the divergence of $X$.

\begin{lemma}[Divergence of $X$]
\label{divwlemma}
For any $1< p <n$, let $g$ and $\phi$ be the solutions to~\eqref{prob_ex_rif} obtained by the solution to~\eqref{pb} through~\eqref{g} and~\eqref{phi} and let $X$ be the vector field defined in~\eqref{X}.
%, where $g$ and $\phi$ are the solutions to~\eqref{prob_ex_rif} obtained by the solution to~\eqref{pb} through~\eqref{g} and~\eqref{phi}
Then, the following identity holds at any point $x \in \R^n \setminus \overline{\Omega}$ such that $\abs{\nabla \phi}(x) \neq 0$.
\begin{equation}
\label{divX}
\dive X 
\,\,= \,\, {{\rm e}^{-\frac{(n-p)}{(n-2)(p-1)} \phi}}  \,\, Q \,\, \geq \,\,  0 \, , 
\end{equation}
where 
\begin{equation}
\label{Q}
Q \,\, = \,\,  \abs{\nabla \phi}^{p-3}\,  \Bigg\{ \, \abs{\nabla \phi}^2\left\vert \, \hh - \frac{\HH}{n-1} \, g^{\!\top} \, \right\vert^2 \!\! + \, (p-1) \, \bigabs{\nabla^\top\abs{\nabla\phi}}^2 \!\!+\, \frac{(p-1)^2}{n-1} \, 
\bigabs{\nabla^\perp\abs{\nabla\phi}}^2
%\bigg\langle\nabla\abs{\nabla\phi} \, \bigg\vert \, \frac{\nabla\phi}{\abs{\nabla\phi}}\bigg\rangle^2
\, \Bigg\},
\end{equation}
where $\hh$ and $\HH$ are respectively the second fundamental form and the mean curvature of the level sets of $\phi$ with respect to the unit normal $\nabla \phi /\abs{\nabla \phi}$.
\end{lemma}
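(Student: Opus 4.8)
The plan is to compute $\dive X$ directly at a point where $\abs{\nabla\phi}\neq 0$ (where $\phi$, and hence $X$, is smooth), exploiting the conformal equation~\eqref{prob_ex_rif} and the Kato-type identity~\eqref{katof}. First I would peel off the scalar weight: setting
\[
V \, = \, \abs{\nabla\phi}^{p-2}\big(\nabla\abs{\nabla\phi} + (p-2)\,\nabla^\perp\abs{\nabla\phi}\big),
\]
so that $X = {\rm e}^{-\frac{(n-p)}{(n-2)(p-1)}\phi}\, V$, and using $\langle\,\nabla\phi\,|\,\nabla^\perp\abs{\nabla\phi}\,\rangle = \langle\,\nabla\phi\,|\,\nabla\abs{\nabla\phi}\,\rangle$ together with $\langle\,\nabla\phi\,|\,V\,\rangle = (p-1)\abs{\nabla\phi}^{p-2}\langle\,\nabla\phi\,|\,\nabla\abs{\nabla\phi}\,\rangle$, one gets
\[
\dive X \, = \, {\rm e}^{-\frac{(n-p)}{(n-2)(p-1)}\phi}\Big(\dive V \,-\, \tfrac{n-p}{n-2}\,\abs{\nabla\phi}^{p-2}\,\langle\,\nabla\phi\,|\,\nabla\abs{\nabla\phi}\,\rangle\Big),
\]
so the whole problem reduces to expanding $\dive V$ and checking that the bracket equals $Q$.

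To expand $\dive V$ I would differentiate the products, getting $\dive(\abs{\nabla\phi}^{p-2}\nabla\abs{\nabla\phi}) = \abs{\nabla\phi}^{p-2}\Delta\abs{\nabla\phi} + (p-2)\abs{\nabla\phi}^{p-3}\abs{\nabla\abs{\nabla\phi}}^2$, and similarly for the normal summand, in which $\nabla^\perp\abs{\nabla\phi} = \abs{\nabla\phi}^{-2}\,\nabla\nabla\phi(\nabla\phi,\nabla\phi)\,\nabla\phi$ gets differentiated and the resulting $\Delta\phi$ is eliminated through the $p$-harmonic equation in the form $\abs{\nabla\phi}^{2}\Delta\phi + (p-2)\,\nabla\nabla\phi(\nabla\phi,\nabla\phi) = 0$. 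The only term not yet of geometric type, $\Delta\abs{\nabla\phi}$, I would rewrite via the Bochner formula for $\tfrac12\abs{\nabla\phi}^2$,
\[
\abs{\nabla\phi}\,\Delta\abs{\nabla\phi} \,+\, \abs{\nabla\abs{\nabla\phi}}^2 \, = \, \abs{\nabla\nabla\phi}^2 \,+\, \langle\,\nabla\phi\,|\,\nabla\Delta\phi\,\rangle \,+\, \ric_g(\nabla\phi,\nabla\phi),
\]
with $\langle\,\nabla\phi\,|\,\nabla\Delta\phi\,\rangle$ computed by differentiating the $p$-harmonic equation along $\nabla\phi$.

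The key step is the cancellation of the curvature term. Evaluating the second line of~\eqref{prob_ex_rif} on $(\nabla\phi,\nabla\phi)$ and using $d\phi(\nabla\phi) = \abs{\nabla\phi}^2$ yields $\ric_g(\nabla\phi,\nabla\phi) = \tfrac{n-p}{n-2}\,\nabla\nabla\phi(\nabla\phi,\nabla\phi)$, which is precisely the combination produced in the previous step; after this substitution every Ricci contribution disappears and $\dive V$ is left in terms of $\abs{\nabla\nabla\phi}^2$, $\abs{\nabla\abs{\nabla\phi}}^2$, $\abs{\nabla^\perp\abs{\nabla\phi}}^2$ and $\abs{\nabla\phi}^2$ alone. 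Then I would invoke the Kato-type identity~\eqref{katof} to replace $\abs{\nabla\nabla\phi}^2 - \big(1 + \tfrac{(p-1)^2}{n-1}\big)\abs{\nabla\abs{\nabla\phi}}^2$ by $\abs{\nabla\phi}^2\,\bigabs{\,\hh - \tfrac{\HH}{n-1}\,g^{\!\top}\,}^2 + \big(1 - \tfrac{(p-1)^2}{n-1}\big)\abs{\nabla^\top\abs{\nabla\phi}}^2$, and the orthogonal decomposition~\eqref{decomposition-intro} to resolve any leftover $\abs{\nabla\abs{\nabla\phi}}^2$ into its normal and tangential pieces. Collecting the coefficients gives exactly $\dive X = {\rm e}^{-\frac{(n-p)}{(n-2)(p-1)}\phi}\,Q$ with $Q$ as in~\eqref{Q}. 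Nonnegativity is then immediate: for $1<p<n$ the constants $1$, $p-1$ and $\tfrac{(p-1)^2}{n-1}$ are all positive and $\abs{\nabla\phi}^{p-3}>0$ wherever $\abs{\nabla\phi}\neq 0$, so $Q\geq 0$.

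The main obstacle is bookkeeping rather than conceptual. Since the $p$-Laplacian is not in divergence form once expanded, many $(p-2)$-weighted terms are generated along the way, several of them only \emph{linear} in $\nabla^\perp\abs{\nabla\phi}$; the computation closes into a sum of squares only because the extra summand $(p-2)\nabla^\perp\abs{\nabla\phi}$ in the definition~\eqref{X} of $X$ is tuned precisely to absorb those linear terms, and because~\eqref{prob_ex_rif} cancels the Ricci term with exactly the coefficient $\tfrac{n-p}{n-2}$ dictated by the weight. Keeping every constant correct — in particular recovering the weights $p-1$ and $\tfrac{(p-1)^2}{n-1}$ of the tangential and normal gradient terms in~\eqref{Q} — is where the real work will be.
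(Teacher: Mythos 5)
Your proposal is correct and follows essentially the same route as the paper: split $X={\rm e}^{-\frac{(n-p)}{(n-2)(p-1)}\phi}(W+Z)$ with $W=\abs{\nabla\phi}^{p-2}\nabla\abs{\nabla\phi}$ and $Z=(p-2)\abs{\nabla\phi}^{p-2}\nabla^\perp\abs{\nabla\phi}$, compute the two divergences, and close the calculation with the Kato-type identity~\eqref{katof} and the orthogonal decomposition~\eqref{decomposition-intro}. The only practical difference is that the paper outsources the Bochner-plus-Ricci-cancellation step (your middle paragraph) to~\cite[Proposition 4.3]{Fog_Maz_Pin}, citing that formula for $\dive W$ directly, whereas you re-derive it; the ingredient you isolate, $\ric_g(\nabla\phi,\nabla\phi)=\tfrac{n-p}{n-2}\,\nabla\nabla\phi(\nabla\phi,\nabla\phi)$ from the second line of~\eqref{prob_ex_rif}, is exactly what is used there. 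One small slip in your prose: since $\nabla\abs{\nabla\phi}=\nabla\nabla\phi(\nabla\phi,\cdot)/\abs{\nabla\phi}$, the correct formula is $\nabla^\perp\abs{\nabla\phi}=\abs{\nabla\phi}^{-3}\,\nabla\nabla\phi(\nabla\phi,\nabla\phi)\,\nabla\phi$, not $\abs{\nabla\phi}^{-2}$; this does not affect the structure of the argument.
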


\begin{proof} For the sake of clearness, we write 
$$
X = {\rm e}^{-\frac{(n-p)}{(n-2)(p-1)}\phi}( W + Z) \, ,
$$ 
where
\begin{equation*}
W = \left\vert \nabla \phi\right\vert^{p-2}\nabla\left\vert\nabla \phi\right\vert
\qquad
\hbox{and} 
\qquad 
Z = (p-2)\abs{\nabla \phi}^{p-2} \nabla^\perp |\nabla \phi|\, .
%
%\left\langle\nabla \left\vert\nabla \phi\right\vert \, \bigg\vert \, \frac{\nabla \phi}{\left\vert\nabla \phi\right\vert}\right\rangle  \frac{\nabla \phi}{\abs{\nabla \phi}}.
\end{equation*}
Using the same computation as in~\cite[Proposition 4.3]{Fog_Maz_Pin} with $q = p/(p-1)$ (in the notation of that paper), one finds that the divergence of $W$ is given by
\begin{equation}
\label{divx}
\begin{split}
\dive\, W\,\, = &\,\, \Big(\frac{n-p}{n-2}\Big) \, \big\langle W \, \vert \, \nabla\phi\big\rangle \, +
\\
+ &\,\, \abs{\nabla\phi}^{p-3}     
 \Bigg\{\abs{\nabla\nabla\phi}^2 - \bigabs{\nabla\abs{\nabla\phi}}^2 
%\\
%&
+\big(p-2\big)\left[ \,\,
\bigabs{\nabla^\perp\abs{\nabla\phi}}^2 
%\left\langle\nabla\abs{\nabla\phi} \, \bigg\vert \, \frac{\nabla\phi}{\abs{\nabla\phi}}\right\rangle^2
-\frac{\nabla\nabla \abs{\nabla\phi}(\nabla\phi, \nabla\phi)}{\abs{\nabla\phi}} \, \right]\Bigg\} .
\end{split}
\end{equation}
Plugging the Kato-type identity~\eqref{katof} in~\eqref{divx} and using the standard decomposition~\eqref{decomposition-intro}, 
we immediately get 
\begin{equation}
\label{divx2}
\begin{split}
\dive W\,\, = &\,\, \Big(\frac{n-p}{n-2}\Big) \, \big\langle W \, \vert \, \nabla\phi\big\rangle \,\, +
\\
+ &\,\, \abs{\nabla\phi}^{p-3}   \,   
 \Bigg\{
%
%\begin{split}
%\dive \, W = \frac{n-p}{n-2} \, & \big\langle W \, \vert \, \nabla\phi\big\rangle  
%+ \abs{\nabla \phi}^{p-3} \Bigg\{
\abs{\nabla \phi}^2\left\vert \, \hh - \frac{\HH}{n-1} \, g^{\!\top} \, \right\vert^2  - \big(p-2\big) \, \frac{\nabla\nabla\abs{\nabla\phi}(\nabla\phi, \nabla\phi)}{\abs{\nabla\phi}} \,\,  + \\
&\qquad\qquad\qquad\qquad \, + \,  \Big\vert \nabla^\top \abs{\nabla \phi} \Big\vert^2 + \left[\frac{(p-1)^2}{n-1} + \big(p-2\big)\right] 
\,
\Big\vert \nabla^\perp \abs{\nabla \phi} \Big\vert^2
%\left\langle{\nabla\abs{\nabla \phi}} \, \bigg\vert \, \frac{\nabla \phi}{\abs{\nabla \phi}}\right\rangle^2  
%\\
%&\qquad\qquad\qquad\qquad\qquad\!\! 
\Bigg\}.
\end{split}
\end{equation}
Let us now compute the divergence of $Z$. Clearly, by the $p$-harmonicity of $\phi$, we have
\[
\dive Z \,\, = \,\,  (p-2) \, \abs{\nabla \phi}^{p-2} \,\, \Bigg\langle \nabla \! \left(\bigg\langle\nabla \! \left\vert\nabla \phi\right\vert \,\, \bigg\vert \,\, \frac{\nabla \phi}{\, \left\vert\nabla \phi\right\vert^2}\bigg\rangle\right)  \Bigg\vert \,  \nabla \phi\Bigg\rangle \, .
\]
Expanding the right hand side and using the identity
\[
\frac{\nabla\nabla \phi \big( \nabla\abs{\nabla \phi}, \nabla \phi \big)}{\abs{\nabla \phi}} \, = \,  \Big\vert\nabla \abs{\nabla \phi} \Big\vert^2
\]
yield
\begin{equation}
\label{divez}
\begin{split}
\dive  Z \,\, = \,\, (p-2)\,\, \abs{\nabla \phi}^{p-3} \, \Bigg\{ \, \frac{\nabla\nabla\abs{\nabla\phi}(\nabla\phi, \nabla\phi)}{\abs{\nabla\phi}} 
%-  \left\langle{\nabla\abs{\nabla \phi}} \, \bigg\vert \, \frac{\nabla \phi}{\abs{\nabla \phi}}\right\rangle^2 
\, + \, \Big\vert \nabla^\top \abs{\nabla \phi} \Big\vert^2 \!\!- \, \Big\vert \nabla^\perp \abs{\nabla \phi} \Big\vert^2 \, \Bigg\} \, .
\end{split}
\end{equation}
Finally, combining~\eqref{divx2} and~\eqref{divez},
and observing that
\[
\left\langle X \, \vert \, \nabla \phi\right\rangle \, = \,\, {\rm e}^{\!-\frac{(n-p)}{(n-2)(p-1)}\phi} \, (p-1)\, \left\langle W \, \vert \, \nabla \phi \right\rangle  \, ,
\]
we arrive at 
\[
\dive X \, = \,\, {\rm e}^{\!-\frac{(n-p)}{(n-2)(p-1)}\phi}\left(\dive W + \, \dive Z \,  - \, \Big(\frac{n-p}{n-2}\Big) \left\langle W \, \vert \, \nabla \phi\right\rangle\right) \,= \,\,  {\rm e}^{\! -\frac{(n-p)}{(n-2)(p-1)}\phi} Q \, .
\]
This completes the proof of the lemma.
\end{proof}
In absence of critical points, the Divergence Theorem applied to the vector field $X$ on the open region $\{s < \phi < S\}$, with $0 < s <S$, easily yields the inequality
\begin{equation}
\label{ineq-liv2}
\int\limits_{\{\phi=s\}}  \!\!\! \left\langle \! X \, \bigg\vert \, \frac{\nabla \phi}{\abs{\nabla \phi}}\right\rangle \dd\sigma \,\,\, \leq  \!\!\int\limits_{\{\phi=S\}} \!\!\left\langle \! X \, \bigg\vert \, \frac{\nabla \phi}{\abs{\nabla \phi}}\right\rangle  \dd\sigma \,, 
\end{equation}
and in turns, thanks to~\eqref{der_fip}, the inequality~\eqref{ineq-liv} below. In presence of a possibly wild critical set, this direct argument is no longer working. Fortunately, some of the new ideas introduced in~\cite{Ago_Maz_3} to treat the same issues in the case of harmonic functions are exportable to the case of $p$-harmonic functions, where 
one does not know {\em a priori} that the critical set is $(n-1)$-negligible.
As a consequence, we are still able to provide an effective version of the considered monotonicity, showing that~\eqref{ineq-liv} is actually in force, provided $s$ is small enough and $S$ is large enough. The desired effective inequality $\Phi_p' (0) \leq 0$, will follow at once.

\begin{theorem}[Effective Monotonicity Formula -- I]
\label{monotone1}
For any $1< p <n$, let $g$ and $\phi$ be the solutions to~\eqref{prob_ex_rif} obtained by the solution to~\eqref{pb} through~\eqref{g} and~\eqref{phi} and let $0< s_p < S_p<+\infty$ be such that $\crit(\phi) \subset \{s_p < \phi < S_p \}$. Then, for every  $0\leq s \leq s_p < S_p \leq S$, the inequality  
\begin{equation}
\label{ineq-liv}
  \frac{\Phi_p'(s)}{  \,\, {\rm e}^{\frac{(n-p)}{(n-2)(p-1)}s} \,\, }\,\ \leq \,\, \frac{\Phi_p'(S)}{ \,\, {\rm e}^{\frac{(n-p)}{(n-2)(p-1)}S} \,\, }\,\, 
\end{equation}
holds true, where $\Phi_p$ is the function defined in~\eqref{phip}. In particular, one has that $\Phi_p'(0) \leq 0$.
%\begin{equation}
%\label{ineq-liv2}
%\int_{\{\phi=S\}}\left\langle X \, \bigg\vert \, \frac{\nabla \phi}{\abs{\nabla \phi}}\right\rangle  \dd\sigma - \int_{\{\phi=s\}}\left\langle X \, \bigg\vert \, \frac{\nabla \phi}{\abs{\nabla \phi}}\right\rangle \dd\sigma \geq 0
%\end{equation}
\end{theorem}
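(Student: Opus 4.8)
The plan is to establish the key inequality~\eqref{ineq-liv} by applying the Divergence Theorem to the vector field $X$ not on the full annular region $\{s < \phi < S\}$ — which would require $\dive X$ to make sense across $\crit(\phi)$, a set possibly of full measure — but on a carefully chosen region obtained by removing a tubular neighbourhood of the critical set, exactly in the spirit of the proof of Lemma~\ref{id-cap}. Concretely, for $\epsilon>0$ let $V_\epsilon$ be the $\epsilon$-tubular neighbourhood of $\crit(\phi)$. Since $\crit(\phi)$ is compact and contained in $\{s_p < \phi < S_p\}$, for all $\epsilon$ small we have $V_\epsilon \subset \{s < \phi < S\}$ as long as $s \leq s_p$ and $S \geq S_p$. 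On the open set $\{s < \phi < S\} \setminus \overline{V_\epsilon}$ the function $\phi$ is analytic, hence $X$ is smooth there, and by Lemma~\ref{divwlemma} we have $\dive X = {\rm e}^{-\frac{(n-p)}{(n-2)(p-1)}\phi} Q \geq 0$ pointwise.

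The next step is to apply the Divergence Theorem on $\{s < \phi < S\} \setminus \overline{V_\epsilon}$. The boundary consists of the level sets $\{\phi = s\}$ and $\{\phi = S\}$ (which lie outside $V_\epsilon$ for $\epsilon$ small, since $\crit(\phi)$ is compactly contained in the open slab) together with $\partial V_\epsilon$. As in Lemma~\ref{id-cap}, for almost every $\epsilon$ the outer unit normal $\nu$ to $\partial V_\epsilon$ is well defined a.e.\ by the Sard-type property for Lipschitz functions of~\cite{Alb_Bia_Cri}. The nonnegativity of $\dive X$ then gives
\begin{equation*}
\int\limits_{\{\phi=s\}}\!\!\Big\langle X \,\Big|\, \tfrac{\nabla\phi}{|\nabla\phi|}\Big\rangle\dd\sigma \,\,\leq\,\, \int\limits_{\{\phi=S\}}\!\!\Big\langle X \,\Big|\, \tfrac{\nabla\phi}{|\nabla\phi|}\Big\rangle\dd\sigma \,\,+\,\, \int\limits_{\partial V_\epsilon}\!\!\big\langle X \,\big|\, \nu\big\rangle\dd\sigma\,.
\end{equation*}
The plan is then to show the boundary term over $\partial V_\epsilon$ vanishes as $\epsilon \to 0^+$. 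Looking at the definition~\eqref{X} of $X$, it is bounded by a constant times ${\rm e}^{-\frac{(n-p)}{(n-2)(p-1)}\phi}|\nabla\phi|^{p-2}\,|\nabla|\nabla\phi||$; since $\phi$ is bounded below on the slab and $|\nabla\phi|^{p-2}|\nabla|\nabla\phi||$ tends to $0$ as one approaches $\crit(\phi)$ (using the $\mathscr C^{1,\alpha}$-regularity together with the analyticity off the critical set — here one must be a little careful when $1<p<2$, but the $C^{1}$ bound on $\phi$ controls $|\nabla\phi|^{p-2}$ times the derivative of $|\nabla\phi|$, which still vanishes along the critical set), the integrand goes to $0$ uniformly, while $\mathcal H^{n-1}(\partial V_\epsilon)$ stays bounded for a.e.\ small $\epsilon$. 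Hence letting $\epsilon\to 0^+$ along a good sequence yields~\eqref{ineq-liv2} and therefore, via the identity~\eqref{der_fip}, the claimed inequality~\eqref{ineq-liv}.

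Finally, the conclusion $\Phi_p'(0)\leq 0$ follows by taking $s=0$ in~\eqref{ineq-liv} and letting $S\to+\infty$: one needs that $\Phi_p'(S)\,{\rm e}^{-\frac{(n-p)}{(n-2)(p-1)}S}\to 0$, which is read off from the asymptotic expansions of Lemma~\ref{asyu} — indeed $\Phi_p$ is bounded and converges to a finite limit~\eqref{limphi} at infinity, and the decay of $|\nabla\phi|$ and its derivatives forces the rescaled derivative to vanish in the limit (equivalently, in the original variables, $U_p'(\tau)\to 0$ as $\tau\to 0^+$ by~\eqref{upfip} and the expansions in Lemma~\ref{asyu}). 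The main obstacle is precisely the control of the flux through $\partial V_\epsilon$: one has no a priori handle on the geometry of $\crit(\phi)$, so the argument must rely entirely on the pointwise decay of the integrand near the critical set combined with the measure-theoretic Sard-type normal existence, rather than on any regularity of $\partial V_\epsilon$ — this is the technical heart, inherited and adapted from~\cite{Ago_Maz_3}, and the degenerate factor $|\nabla\phi|^{p-2}$ for $p<2$ is the point requiring the most care.
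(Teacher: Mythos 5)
Your strategy --- apply the Divergence Theorem on $\{s<\phi<S\}$ minus a tubular neighbourhood $V_\epsilon$ of $\crit(\phi)$ and send $\epsilon\to 0^+$, as in Lemma~\ref{id-cap} --- is the natural first attempt, and the paper explicitly flags (just before Theorem~\ref{monotone1}) that this direct argument is ``no longer working'' in the presence of a wild critical set. The gap in your proposal is in the claim that the flux of $X$ through $\partial V_\epsilon$ vanishes. In Lemma~\ref{id-cap} the flux integrand is $u\,\abs{\D u}^{p-2}\langle\D u\,\vert\,\nu\rangle$, which is dominated by the \emph{first-order} quantity $\abs{\D u}^{p-1}$, and this decays near $\crit(u)$ simply by the $\mathscr{C}^{1,\alpha}$-regularity of $u$. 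By contrast, $X$ carries the factor $\abs{\nabla\phi}^{p-2}\,\nabla\abs{\nabla\phi}$, which is a \emph{second-order} quantity in $\phi$ (equivalently in $u$). The $\mathscr{C}^{1,\alpha}$-regularity of $p$-harmonic functions gives no pointwise bound whatsoever on $\nabla\nabla\phi$ near $\crit(\phi)$: for $p\neq 2$ the Hessian is not defined classically on the critical set, and away from it, although $\phi$ is analytic, there is no a~priori decay rate. Thus $\abs{\nabla\phi}^{p-2}\,\abs{\nabla\abs{\nabla\phi}}$ has no reason to tend to zero (for $p<2$ the first factor even blows up, and for $p>2$ the decay of $\abs{\nabla\phi}^{p-2}$ is not known to dominate the possible growth of $\abs{\nabla\abs{\nabla\phi}}$). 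Your parenthetical remark that ``the $C^1$ bound on $\phi$ controls $\abs{\nabla\phi}^{p-2}$ times the derivative of $\abs{\nabla\phi}$'' is simply not justified. A secondary weakness is the assertion that $\Haus^{n-1}(\partial V_\epsilon)$ stays bounded as $\epsilon\to0^+$: since $\crit(\phi)$ may have full measure, the coarea bound only gives $\int_0^{\epsilon_0}\Haus^{n-1}(\partial V_\epsilon)\,\dd\epsilon<+\infty$, i.e.\ finiteness for a.e.\ $\epsilon$, which is compatible with $\Haus^{n-1}(\partial V_\epsilon)$ blowing up along every sequence $\epsilon\to0^+$.

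The paper's actual proof avoids both difficulties by a different mechanism and never introduces $\partial V_\epsilon$. It multiplies $X$ by a cut-off $\chi(\abs{\nabla\phi})$ vanishing on $\{\abs{\nabla\phi}\leq\epsilon/2\}$, so that $\widetilde{X}=\chi(\abs{\nabla\phi})X$ is smooth on the \emph{whole} slab $\{s<\phi<S\}$. The Divergence Theorem then produces no spurious boundary term but instead the extra bulk term $\dot\chi(\abs{\nabla\phi})\,\langle X\,\vert\,\nabla\abs{\nabla\phi}\rangle$, supported in $\{\epsilon/2\leq\abs{\nabla\phi}\leq3\epsilon/2\}$, and the entire argument hinges on the algebraic observation that
\begin{equation}
\langle X\,\vert\,\nabla\abs{\nabla\phi}\rangle
\,=\,
{\rm e}^{-\frac{(n-p)}{(n-2)(p-1)}\phi}\,\abs{\nabla\phi}^{p-2}
\Big[\,\abs{\nabla^\top\abs{\nabla\phi}}^2+(p-1)\abs{\nabla^\perp\abs{\nabla\phi}}^2\,\Big]\,\geq\,0\,,
\end{equation}
so the extra term has a sign and can be discarded with no quantitative estimate. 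This sign information, rather than any decay of the integrand near $\crit(\phi)$, is what lets the monotonicity survive the critical set, and your proposal contains no analogue of it. Finally, for $\Phi_p'(0)\leq0$ you invoke the vanishing of $\Phi_p'(S)\,{\rm e}^{-\frac{(n-p)}{(n-2)(p-1)}S}$ as $S\to+\infty$, which would require second-order asymptotics beyond Lemma~\ref{asyu}; the paper instead argues by contradiction, integrating~\eqref{ineq-liv} in $S$ and using only the boundedness of $\Phi_p$, which is already established.
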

\begin{proof}
For a given $\epsilon >0$, we consider a smooth nonnegative cut-off-function $\chi : [0, +\infty) \rightarrow \R$, such that
\begin{equation}
\label{chi}
\begin{cases}
\,\,\chi(t)= 0 & \mbox{in} \,\, t < \frac{1}{2}\epsilon\, ,  
\\
\,\,\dot{\chi}(t) \geq 0 & \mbox{in}\,\, \frac{1}{2}\epsilon \leq t \leq \frac{3}{2}\epsilon \, ,
\\
\,\,\chi(t) =1 &\mbox{in} \,\, t > \frac{3}{2}\epsilon \, .
\end{cases}
\end{equation}
Since $\chi\big(\abs{\nabla \phi}\big) = 0$ on $\crit(\phi)$,
we can apply the Divergence Theorem to the smooth vector field 
\[
\widetilde{X} \,\, = \,\, \chi\big(\abs{\nabla \phi}\big)\,  X
\]
in the domain $\{s < \phi < S\}$.
Observe that, choosing $\epsilon$ small enough, we can make sure that $\chi\big(\abs{\nabla \phi}\big) = 1$ on $\{\phi =s \}$ and $\{\phi =S\}$, since $\crit(\phi) \subset \{ s_p <\phi<S_p\}$. Having this in mind, we compute
\begin{equation}
\label{divth1}
\begin{split}
\hspace{-1cm}\int\limits_{\{\phi=S\}} \!\!\!\! \left\langle \!X \,\bigg\vert \, \frac{\nabla \phi}{\abs{\nabla \phi}}\right\rangle \dd\sigma \,\,\,  &- \!\!\!\int\limits_{\{\phi=s\}} \!\!\!\! \left\langle \!X \,\bigg\vert \, \frac{\nabla \phi}{\abs{\nabla \phi}}\right\rangle   \dd\sigma
%\int_{\{\phi=s\}}\left\langle X \,\bigg\vert \, \frac{\nabla \phi}{\abs{\nabla \phi}}\right\rangle \dd\sigma 
\,\, = \!\!\!\!\!\!\int\limits_{\{s < \phi < S\}}\!\!\!\!\!\! \dive \widetilde{X} \, \dd\mu  \,\, = \\ 
&=\!\!\!\!\!\!\!\!\! \int\limits_{\{s < \phi < S\} \setminus U_{\epsilon/{2}}}\!\!\!\!\!\!\!\!\!\!\!\! \chi\big(\abs{\nabla \phi}\big)\,\, \dive X \, \dd\mu  \,\,\, + \!\!\!\!\!\!\!\! \int\limits_{U_{{3}\epsilon/{2}} \setminus U_{\epsilon/{2}}} \!\!\!\!\!\!\!\! \dot\chi\big(\abs{\nabla \phi}\big) \, \left\langle X \, \vert \, \nabla \abs{\nabla \phi}\right\rangle \dd\mu \, ,
\end{split}
\end{equation}
where in the last identity we have used the tubular neighbourhood of $\crit(\phi)$ defined for every $\delta>0$ as $U_\delta = \{\abs{\nabla \phi} \leq \delta\}$. In view of~\eqref{der_fip}, \eqref{divX}, \eqref{chi} and~\eqref{divth1}, the inequality~\eqref{ineq-liv} is proved if we show that $\left\langle X \, | \,  \nabla \abs{\nabla \phi}\right\rangle \geq 0$ on ${U_{{3}\epsilon/{2}} \setminus U_{\epsilon/{2}}}$.
On the other hand, a direct computation gives 
\begin{equation}
\begin{split}
\left\langle X\,  \big| \, \nabla \abs{\nabla \phi}\right\rangle \,\, &= \,\, {\rm e}^{-\frac{(n-p)}{(n-2)(p-1)} \phi} \,  \abs{\nabla \phi}^{p-2} \, \left[ \,\, \big\vert\nabla \abs{\nabla \phi}\big\vert^2 \! + (p-2) \big\vert\nabla^\perp \abs{\nabla \phi}\big\vert^2
%\left\langle\nabla \abs{\nabla \phi} \, \bigg\vert \, \frac{\nabla \phi}{\abs{\nabla \phi}}\right\rangle^2 
\,\, \right]  \, = \\
&= \,\, {\rm e}^{-\frac{(n-p)}{(n-2)(p-1)} \phi} \,  \abs{\nabla \phi}^{p-2} \, \left[ \,\, \big\vert\nabla^\top \abs{\nabla \phi}\big\vert^2 \! + (p-1) \big\vert\nabla^\perp \abs{\nabla \phi}\big\vert^2
%\left\langle\nabla \abs{\nabla \phi} \, \bigg\vert \, \frac{\nabla \phi}{\abs{\nabla \phi}}\right\rangle^2 
\,\, \right]  \, \geq \, 0 \, .
%&= {\rm e}^{-\frac{(n-p)}{(n-2)(p-1)}\phi} \, \abs{\nabla \phi}^{p-2} \left[ \Big\vert \nabla_T \abs{\nabla \phi} \Big\vert^2 + (p-1)\left\langle\nabla \abs{\nabla \phi} \, \bigg\vert \, \frac{\nabla \phi}{\abs{\nabla \phi}}\right\rangle^2 \right] \\
%&\geq 0,
\end{split}
\end{equation}
This completes the proof of the first part of the statement. It remains to show that $\Phi_p'(0) \leq 0$. From~\eqref{ineq-liv} it follows at once that, for every $S\geq S_p$, it holds
\begin{equation}
{\rm e}^{\frac{(n-p)}{(n-2)(p-1)}S}\, \Phi'_p(0) \,\, \leq\,\, \Phi'_p(S)  \, .
\end{equation}
Integrating both sides of the above inequality on an interval of the form $(S_p, S)$, with $S_p<S$, we obtain 
\begin{equation}
\frac{(n-2)(p-1)}{(n-p)} \, {\rm e}^{\frac{(n-p)}{(n-2)(p-1)}S}\, \Phi_p'(0) \, + \, \Phi_p (S_p) \,  - \, \frac{(n-2)(p-1)}{(n-p)} \, {\rm e}^{\frac{(n-p)}{(n-2)(p-1)}S_p} \, \Phi_p'(0) \,\, \leq \,\, \Phi_p(S) \, .
\end{equation}
If by contradiction, $\Phi_p'(0) > 0$, then, letting $S\to + \infty$ in the above inequality, we would deduce that $\Phi_p(S) \to +\infty$, against the boundedness of $\Phi_p$ discussed at the end of Subsection~\ref{conformalsetting}.
\end{proof}

%%%%%%%%%%%%%%%%%%%%%%%%%%%%%%%%%%%%%%%%%%%%%%%%%

\subsection{Second effective inequality: $\Phi_p(+\infty) \leq \Phi_p(0)$.}

%%%%%%%%%%%%%%%%%%%%%%%%%%%%%%%%%%%%%%%%%%%%%%%%%

As already observed several times, the presence of critical points and critical values possibly arranged in sets with full measure makes the full monotonicity not expectable in general. 
In fact, the lack of a sufficiently strong Sard-type 
property for the $p$-capacitary potentials prevents any kind of straightforward adaptation of the arguments presented in~\cite{Ago_Maz_3}
(it is worth mentioning though \cite{Cha_Mor_Pon,Cha_Mor_Nov_Pon},
where a generic non-fattening property is proved
for the level sets of $p$-harmonic functions). 
In other words, there is no hope for deducing the global inequality $\Phi_p(+\infty) \leq \Phi_p(0)$ from the pointwise inequality $\Phi_p'(s) \leq 0$ through integration, since the latter inequality may fail to be true -- or even well defined -- for too many values of $s \in [0, +\infty)$. To face the main difficulty of our program, we craft a new family of effective monotonicity formulas. For a given $1<p<n$ and a given $0 < \lambda < 1$, we consider the vector field 
\begin{equation}
\label{Ylambda}
Y_\lambda \,\, = \,\, \Big({\rm e}^{ \frac{(n-p)}{(n-2)(p-1)} \phi}  -   \lambda\Big)\,  X \, - \, \Big(\frac{n-p}{n-2} \Big) \, \abs{\nabla \phi}^{p-1} \nabla \phi \, ,
\end{equation}
where $X$ has been defined in~\eqref{X}.
It is convenient to observe that at a regular value of $\phi$ it holds
\begin{equation}
\label{fip_derfip}
\bigg(\frac{ \,\, {\rm e}^{\frac{(n-p)}{(n-2)(p-1)}s} - \lambda\,\, }{{\rm e}^{\frac{(n-p)}{(n-2)(p-1)}s}} \bigg) \, \Phi'_p(s) \, - \, \frac{(n-p)}{(n-2)(p-1)} \Phi_p (s)\,\, = \,\, \frac{1}{(p-1)} \!\!\int\limits_{\{\phi = s\}} \!\!\! \left\langle Y_\lambda \, \bigg\vert \, \frac{\nabla \phi}{\abs{\nabla \phi}} \right\rangle \dd\sigma \, .
\end{equation}
In the next lemma, we compute the divergence of $Y_\lambda$.

\begin{lemma}[Divergence of $Y_\lambda$]
\label{divYlambdalemma}
For any $1< p <n$ and any $0<\lambda<1$, let $g$ and $\phi$ be the solutions to~\eqref{prob_ex_rif} obtained by the solution to~\eqref{pb} through~\eqref{g} and~\eqref{phi} and let $Y_\lambda$ be the vector field defined in~\eqref{Ylambda}.
%, where $g$ and $\phi$ are the solutions to~\eqref{prob_ex_rif} obtained by the solution to~\eqref{pb} through~\eqref{g} and~\eqref{phi}
Then, the following identity holds at any point $x \in \R^n \setminus \overline{\Omega}$ such that $\abs{\nabla \phi}(x) \neq 0$
\begin{equation}
\label{divwalphaeq}
\dive Y_\lambda \,\, = \,\, \bigg(\frac{ \,\, {\rm e}^{\frac{(n-p)}{(n-2)(p-1)}\phi} - \lambda\,\, }{{\rm e}^{\frac{(n-p)}{(n-2)(p-1)}\phi}} \bigg) \, Q \,\, \geq \,\, 0 \, ,
%\left(e^{\frac{(n-p)}{(n-2)(p-1)}\phi} - \lambda\right)\frac{Q}{e^{\frac{(n-p)}{(n-2)(p-1)}\phi}} ,
\end{equation}
where $Q$ is the nonnegative quantity defined in~\eqref{Q}.
\end{lemma}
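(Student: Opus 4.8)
The plan is to prove Lemma~\ref{divYlambdalemma} by a direct computation, reducing everything to the already-established Lemma~\ref{divwlemma}. Write $Y_\lambda = \big({\rm e}^{\frac{(n-p)}{(n-2)(p-1)}\phi} - \lambda\big) X - \big(\tfrac{n-p}{n-2}\big)\abs{\nabla\phi}^{p-1}\nabla\phi$ and expand the divergence using the Leibniz rule:
\[
\dive Y_\lambda \, = \, \big({\rm e}^{\frac{(n-p)}{(n-2)(p-1)}\phi} - \lambda\big)\dive X \, + \, \big\langle \nabla {\rm e}^{\frac{(n-p)}{(n-2)(p-1)}\phi} \, \big| \, X\big\rangle \, - \, \Big(\frac{n-p}{n-2}\Big)\dive\big(\abs{\nabla\phi}^{p-1}\nabla\phi\big)\, .
\]
For the first term I would simply insert~\eqref{divX} from Lemma~\ref{divwlemma}, which gives $\big({\rm e}^{\frac{(n-p)}{(n-2)(p-1)}\phi}-\lambda\big){\rm e}^{-\frac{(n-p)}{(n-2)(p-1)}\phi}Q = \big(\tfrac{{\rm e}^{\frac{(n-p)}{(n-2)(p-1)}\phi}-\lambda}{{\rm e}^{\frac{(n-p)}{(n-2)(p-1)}\phi}}\big)Q$, which is precisely the claimed right-hand side. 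So the whole point of the computation is to show that the remaining two terms cancel.

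For the second term, $\nabla {\rm e}^{\frac{(n-p)}{(n-2)(p-1)}\phi} = \tfrac{(n-p)}{(n-2)(p-1)}\,{\rm e}^{\frac{(n-p)}{(n-2)(p-1)}\phi}\,\nabla\phi$, and pairing against $X$ as given in~\eqref{X} and using the identity $\big\langle X \, \big| \, \nabla\phi\big\rangle = {\rm e}^{-\frac{(n-p)}{(n-2)(p-1)}\phi}(p-1)\langle W \, | \, \nabla\phi\rangle$ recorded in the proof of Lemma~\ref{divwlemma}, together with $\langle W \, | \, \nabla\phi\rangle = \abs{\nabla\phi}^{p-2}\langle\nabla\abs{\nabla\phi} \, | \, \nabla\phi\rangle = \abs{\nabla\phi}^{p-1}\langle\nabla\abs{\nabla\phi} \, | \, \nabla\phi/\abs{\nabla\phi}\rangle$ and the identity $\tfrac{\nabla\nabla\phi(\nabla\abs{\nabla\phi},\nabla\phi)}{\abs{\nabla\phi}} = \abs{\nabla\abs{\nabla\phi}}^2$ — no wait, more directly $\langle\nabla\abs{\nabla\phi}\,|\,\nabla\phi\rangle = \tfrac{\nabla\nabla\phi(\nabla\phi,\nabla\phi)}{\abs{\nabla\phi}}$ — one gets the second term equal to $\tfrac{(n-p)}{(n-2)}\,\abs{\nabla\phi}^{p-2}\,\tfrac{\nabla\nabla\phi(\nabla\phi,\nabla\phi)}{\abs{\nabla\phi}}$. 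For the third term, expand $\dive(\abs{\nabla\phi}^{p-1}\nabla\phi)$: by $p$-harmonicity $\dive(\abs{\nabla\phi}^{p-2}\nabla\phi)=0$, so $\dive(\abs{\nabla\phi}^{p-1}\nabla\phi) = \langle\nabla\abs{\nabla\phi}\,|\,\abs{\nabla\phi}^{p-2}\nabla\phi\rangle = \abs{\nabla\phi}^{p-2}\tfrac{\nabla\nabla\phi(\nabla\phi,\nabla\phi)}{\abs{\nabla\phi}}$. Multiplying by $\tfrac{n-p}{n-2}$ shows the third term exactly cancels the second, completing the proof.

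The computation is entirely routine; the only mild subtlety — and the step I expect to require the most care — is the bookkeeping of the exponential weight and the correct use of the relation $\langle X \, | \, \nabla\phi\rangle = (p-1)\,{\rm e}^{-\frac{(n-p)}{(n-2)(p-1)}\phi}\langle W \, | \, \nabla\phi\rangle$ established inside the proof of Lemma~\ref{divwlemma}, since $X$ also contains the $\nabla^\perp\abs{\nabla\phi}$ term which, when paired with $\nabla\phi$, contributes on equal footing with $\nabla\abs{\nabla\phi}$ (indeed $\langle\nabla^\perp\abs{\nabla\phi}\,|\,\nabla\phi\rangle = \langle\nabla\abs{\nabla\phi}\,|\,\nabla\phi\rangle$). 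Once these pairings are handled correctly, the cancellation of the two ``extra'' terms is forced, and the sign $\dive Y_\lambda \geq 0$ follows immediately from $Q\geq 0$ and $0<\lambda<1$, which guarantees ${\rm e}^{\frac{(n-p)}{(n-2)(p-1)}\phi} - \lambda > 1 - \lambda > 0$ on all of $\R^n\setminus\overline\Omega$ (using $\phi\geq 0$).
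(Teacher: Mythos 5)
Your computation reproduces the paper's proof almost line for line: the Leibniz expansion, substituting~\eqref{divX} for the first term, using $\langle X \,|\, \nabla\phi\rangle = (p-1)\,{\rm e}^{-\frac{(n-p)}{(n-2)(p-1)}\phi}\,|\nabla\phi|^{p-2}\langle\nabla|\nabla\phi|\,|\,\nabla\phi\rangle$ (correctly accounting for the $(p-2)\nabla^\perp|\nabla\phi|$ contribution), and the $p$-harmonicity identity $\dive(|\nabla\phi|^{p-1}\nabla\phi) = |\nabla\phi|^{p-2}\langle\nabla|\nabla\phi|\,|\,\nabla\phi\rangle$, after which the two extra terms cancel exactly as in the paper. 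The sign observation at the end, using $\phi>0$ and $0<\lambda<1$, also matches the intended argument, so this is the same approach and it is correct.
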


\begin{proof}
By the very definition of $Y_\lambda$, we have that 
\begin{equation}
\begin{split}
\label{diveYlambda1}
\dive Y_\lambda \,\, = &\,\, \left({\rm e}^{\frac{(n-p)}{(n-2)(p-1)}\phi} - \lambda\right) \, \dive X \,\, + \,\,  \frac{(n-p)}{(n-2)(p-1)}\,\,  {\rm e}^{\frac{(n-p)}{(n-2)(p-1)}\phi}\left\langle X \, \vert \, \nabla \phi\right\rangle \, +  \\
& \, - \Big( \frac{n-p}{n-2} \Big) \,\, \dive \big(\abs{\nabla \phi}^{p-1} \nabla \phi\big) \, .
\end{split}
\end{equation}
Using the definition~\eqref{X} of the vector field $X$, we compute
\[
\left\langle X \, \vert \, \nabla \phi \right\rangle \,\, = \,\, (p-1)\, {\rm e}^{-\frac{(n-p)}{(n-2)(p-1)}\phi} \, \abs{\nabla \phi}^{p-2} \,  \left\langle \nabla \abs{\nabla \phi} \, \vert \, \nabla \phi\right\rangle \, .
\]
Exploiting the $p$-harmonicity of $\phi$, we get
\[
\dive \big(\abs{\nabla \phi}^{p-1} \nabla \phi\big) \,\, = \,\,  \abs{\nabla \phi}^{p-2}\left\langle \nabla\abs{\nabla \phi} \, \vert \, {\nabla \phi}\right\rangle \, .
\]
We conclude that
\[
\dive Y_\lambda \,\, = \,\,  \Big({\rm e}^{\frac{(n-p)}{(n-2)(p-1)}\phi} - \lambda\Big) \, \dive X \,\, = \,\, \bigg(\frac{ \,\, {\rm e}^{\frac{(n-p)}{(n-2)(p-1)}\phi} - \lambda\,\, }{{\rm e}^{\frac{(n-p)}{(n-2)(p-1)}\phi}} \bigg) \, Q  \, ,
%\left(e^{\frac{(n-p)}{(n-2)(p-1)}\phi} - \lambda\right)\frac{Q}{e^{\frac{(n-p)}{(n-2)(p-1)}\phi}},
\]
where in the last equality we made use of the identity~\eqref{divX}.
\end{proof}

Again, in absence of critical points, the Divergence Theorem applied to the vector field $Y_\lambda$ on the open region $\{s < \phi < S\}$ easily yields the inequality
\begin{equation}
\label{monotone2f}
\int\limits_{\{\phi=s\}}  \!\!\! \left\langle \! Y_\lambda \, \bigg\vert \, \frac{\nabla \phi}{\abs{\nabla \phi}}\right\rangle \dd\sigma \,\,\, \leq  \!\!\int\limits_{\{\phi=S\}} \!\!\left\langle \! Y_\lambda \, \bigg\vert \, \frac{\nabla \phi}{\abs{\nabla \phi}}\right\rangle  \dd\sigma \,, 
\end{equation}
and in turns, thanks to~\eqref{fip_derfip}, the inequality~\eqref{mon_2} below. As usual, the difficult part is the treatment of the critical points. However, a quite surprising computation in the spirit of Theorem~\ref{monotone1} shows that it is always possible to deduce the second effective inequality.

\begin{theorem}[Effective Monotonicity Formula -- II]
\label{monotone2}
For any $1< p <n$, let $g$ and $\phi$ be the solutions to~\eqref{prob_ex_rif} obtained by the solution to~\eqref{pb} through~\eqref{g} and~\eqref{phi} and let $0< s_p < S_p<+\infty$ be such that $\crit(\phi) \subset \{s_p < \phi < S_p \}$. Then, for every $0<\lambda<1$ and every  $0\leq s \leq s_p < S_p \leq S$, the inequality 
%Let $Y_\lambda$ be defined by \eqref{Ylambda}. Then, for any $S > s > 0$ such that $\crit(\phi) \subset \{s< \phi < S\}$ we have
\begin{equation}
\label{mon_2}
\hspace{-0.2cm}\bigg(\frac{ \, {\rm e}^{\frac{(n-p) s}{(n-2)(p-1)}} - \lambda\,\, }{{\rm e}^{\frac{(n-p)s}{(n-2)(p-1)}}} \bigg) \, \Phi'_p(s) \, - \, \frac{(n-p) \, \Phi_p (s)}{(n-2)(p-1)}  \, \leq \, \bigg(\frac{ \, {\rm e}^{\frac{(n-p) S}{(n-2)(p-1)}} - \lambda\,\, }{{\rm e}^{\frac{(n-p)S}{(n-2)(p-1)}}} \bigg) \, \Phi'_p(S) \, - \, \frac{(n-p) \, \Phi_p (S) }{(n-2)(p-1)}  
%\label{monotone2f}
%\int_{\{\phi = S\}} \left\langle Y_\lambda \,\bigg\vert \, \frac{\nabla \phi}{\abs{\nabla \phi}} \right\rangle \dd\sigma - \int_{\{\phi = s\}} \left\langle Y_\lambda \, \bigg\vert \, \frac{\nabla \phi}{\abs{\nabla \phi}} \right\rangle \dd\sigma \geq 0. 
\end{equation}
holds true, where $\Phi_p$ is the function defined in~\eqref{phip}. In particular, one has that $\Phi_p(+\infty) \leq \Phi_p(0)$.
\end{theorem}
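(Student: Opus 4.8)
The plan is to replay, for the vector field $Y_\lambda$, the cut-off argument carried out in the proof of Theorem~\ref{monotone1}. Fix regular values $0\le s\le s_p<S_p\le S$ and, for $\epsilon>0$ small, let $\chi$ be the cut-off function of $\abs{\nabla\phi}$ introduced in~\eqref{chi}. Since $\chi(\abs{\nabla\phi})$ vanishes on $\crit(\phi)$, the Divergence Theorem applies to $\widetilde Y_\lambda=\chi(\abs{\nabla\phi})\,Y_\lambda$ on $\{s<\phi<S\}$ and yields, exactly as in~\eqref{divth1},
\begin{equation*}
\int_{\{\phi=S\}}\!\big\langle Y_\lambda\,\big|\,\tfrac{\nabla\phi}{\abs{\nabla\phi}}\big\rangle\,\dd\sigma-\int_{\{\phi=s\}}\!\big\langle Y_\lambda\,\big|\,\tfrac{\nabla\phi}{\abs{\nabla\phi}}\big\rangle\,\dd\sigma=\int_{\{s<\phi<S\}}\!\chi(\abs{\nabla\phi})\,\dive Y_\lambda\,\dd\mu+\int_{U_{3\epsilon/2}\setminus U_{\epsilon/2}}\!\dot\chi(\abs{\nabla\phi})\,\big\langle Y_\lambda\,\big|\,\nabla\abs{\nabla\phi}\big\rangle\,\dd\mu\,.
\end{equation*}
By Lemma~\ref{divYlambdalemma} the first integral on the right is nonnegative; everything reduces to controlling the second one, whose integrand --- in contrast with Theorem~\ref{monotone1} --- need not be pointwise signed.

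First I would establish the pointwise bound that replaces the sign condition of Theorem~\ref{monotone1}. Writing $t=\big\langle\nabla\abs{\nabla\phi}\,\big|\,\nabla\phi/\abs{\nabla\phi}\big\rangle$, so that $t^{2}=\abs{\nabla^\perp\abs{\nabla\phi}}^{2}$ and $\langle\nabla\phi\,|\,\nabla\abs{\nabla\phi}\rangle=t\,\abs{\nabla\phi}$, the same algebra that gave $\langle X\,|\,\nabla\abs{\nabla\phi}\rangle\ge 0$ in the proof of Theorem~\ref{monotone1}, together with the definition~\eqref{Ylambda} of $Y_\lambda$, produces
\begin{equation*}
\big\langle Y_\lambda\,\big|\,\nabla\abs{\nabla\phi}\big\rangle=\Bigl(1-\lambda\,\ee^{-\frac{(n-p)}{(n-2)(p-1)}\phi}\Bigr)\abs{\nabla\phi}^{p-2}\Bigl(\abs{\nabla^\top\abs{\nabla\phi}}^{2}+(p-1)\,t^{2}\Bigr)-\Bigl(\tfrac{n-p}{n-2}\Bigr)\abs{\nabla\phi}^{p}\,t\,.
\end{equation*}
As $\phi\ge 0$, one has $1-\lambda\,\ee^{-\frac{(n-p)}{(n-2)(p-1)}\phi}\ge 1-\lambda>0$, so the bracketed term is a quadratic in $t$ with leading coefficient at least $(1-\lambda)(p-1)$; completing the square gives
\begin{equation*}
\big\langle Y_\lambda\,\big|\,\nabla\abs{\nabla\phi}\big\rangle\;\ge\;-\,\frac{(n-p)^{2}}{4\,(n-2)^{2}\,(1-\lambda)\,(p-1)}\,\abs{\nabla\phi}^{p+2}\,,
\end{equation*}
a bound that --- crucially --- involves neither the second fundamental form nor the Hessian of $\phi$, and is therefore insensitive to how wild $\crit(\phi)$ may be. On $U_{3\epsilon/2}\setminus U_{\epsilon/2}$ one has $\abs{\nabla\phi}\le\tfrac32\epsilon$; choosing $\chi$ with $\norm{\dot\chi}_\infty\le C/\epsilon$ and using that $\mu(U_{3\epsilon/2})\le M$ uniformly for small $\epsilon$ --- which holds because $\crit(\phi)$ is compact and, by Lemma~\ref{asyu}, $\abs{\nabla\phi}$ is bounded away from zero near infinity, so $\{\abs{\nabla\phi}\le\delta\}$ lies in a fixed bounded region for $\delta$ small --- the last integral above is bounded in absolute value by $C'\epsilon^{p+1}$. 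Letting $\epsilon\to 0^+$ in the displayed identity and invoking~\eqref{fip_derfip} yields precisely~\eqref{mon_2}.

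It remains to deduce $\Phi_p(+\infty)\le\Phi_p(0)$. On $[S_p,+\infty)$ the function $\phi$ has no critical points, so the Divergence Theorem applies to $Y_\lambda$ with no cut-off; consequently $\Psi_\lambda(\tau):=\bigl(1-\lambda\,\ee^{-\frac{(n-p)\tau}{(n-2)(p-1)}}\bigr)\Phi_p'(\tau)-\tfrac{n-p}{(n-2)(p-1)}\Phi_p(\tau)$, which by~\eqref{fip_derfip} equals $\tfrac{1}{p-1}\int_{\{\phi=\tau\}}\langle Y_\lambda\,|\,\nabla\phi/\abs{\nabla\phi}\rangle\,\dd\sigma$, is nondecreasing on $[S_p,+\infty)$. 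Since $\Phi_p$ is bounded (see the end of Subsection~\ref{conformalsetting}), its limit at $+\infty$ must be finite --- otherwise $\Phi_p'$, and hence $\Phi_p$, would blow up --- and the same boundedness then forces $\Phi_p'(\tau)\to 0$, so that $\Psi_\lambda(\tau)\to-\tfrac{n-p}{(n-2)(p-1)}\Phi_p(+\infty)$ as $\tau\to+\infty$. Combining this with~\eqref{mon_2} at $s=0$ gives
\begin{equation*}
(1-\lambda)\,\Phi_p'(0)-\frac{n-p}{(n-2)(p-1)}\,\Phi_p(0)\;\le\;-\,\frac{n-p}{(n-2)(p-1)}\,\Phi_p(+\infty)\,,
\end{equation*}
that is, $\Phi_p(+\infty)-\Phi_p(0)\le-\tfrac{(n-2)(p-1)}{n-p}\,(1-\lambda)\,\Phi_p'(0)$. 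Since $\Phi_p'(0)\le 0$ by Theorem~\ref{monotone1}, the right-hand side is at most $\tfrac{(n-2)(p-1)}{n-p}\,(1-\lambda)\,\abs{\Phi_p'(0)}$, and letting $\lambda\to 1^-$ yields $\Phi_p(+\infty)\le\Phi_p(0)$.

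The main obstacle is the same one that pervades Section~\ref{proof}: the critical set $\crit(\phi)$ may carry full measure, so one can neither integrate a pointwise differential inequality nor --- unlike in Theorem~\ref{monotone1} --- rely on the cut-off error having a favourable sign. What makes the argument go through is the particular choice of the drift term $-\bigl(\tfrac{n-p}{n-2}\bigr)\abs{\nabla\phi}^{p-1}\nabla\phi$ in $Y_\lambda$: it keeps $\dive Y_\lambda\ge 0$ (Lemma~\ref{divYlambdalemma}) while turning $\langle Y_\lambda\,|\,\nabla\abs{\nabla\phi}\rangle$ into a quantity whose negative part is controlled by the bare power $\abs{\nabla\phi}^{p+2}$ --- with no geometric factor that could blow up near $\crit(\phi)$ --- and the surplus exponent, visible in $\abs{\nabla\phi}^{p+2}\,\norm{\dot\chi}_\infty\sim\epsilon^{p+1}\to 0$, is exactly what makes the cut-off error vanish in the limit regardless of how badly $\crit(\phi)$ is distributed.
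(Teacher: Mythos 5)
Your proof is correct, but it handles the cut-off error by a genuinely different mechanism than the paper. Where the paper replaces the cut-off $\chi(\abs{\nabla\phi})$ of Theorem~\ref{monotone1} by $\chi\bigl(\eta_\lambda(\phi)\abs{\nabla\phi}\bigr)$ with $\eta_\lambda(\phi)=\bigl(\ee^{\frac{(n-p)}{(n-2)(p-1)}\phi}-\lambda\bigr)^{-1}$, and then shows via the completion-of-square identity~\eqref{alphaeta} that $\bigl\langle Y_\lambda\,\big|\,\nabla\bigl(\eta_\lambda(\phi)\abs{\nabla\phi}\bigr)\bigr\rangle\ge 0$ pointwise --- so the error term has a \emph{sign} --- you keep the same cut-off $\chi(\abs{\nabla\phi})$ as in Theorem~\ref{monotone1}, accept that $\langle Y_\lambda\,|\,\nabla\abs{\nabla\phi}\rangle$ need not be signed, and instead bound its negative part by $C\abs{\nabla\phi}^{p+2}$, so that the error is $O(\epsilon^{p+1})$ and vanishes in the limit. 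Both routes work: yours is more direct (same cut-off as in Theorem~\ref{monotone1}, no algebraic serendipity, only the quadratic-in-$t$ structure plus the sign $1-\lambda\ee^{-\cdot}\ge 1-\lambda>0$), at the price of needing a uniform bound on $\mu_g(U_{3\epsilon/2})$; the paper's route is algebraically more delicate but avoids any quantitative estimate on the error. For the second half, the paper invokes $\Phi_p'(S)\le 0$ for $S>S_p$ (from Theorem~\ref{monotone1}) and passes to a $\liminf$, whereas you deduce $\Phi_p'(\tau)\to 0$ from the monotonicity of $\Psi_\lambda$ together with the boundedness of $\Phi_p$. This is sound, but your phrasing ``the same boundedness then forces $\Phi_p'(\tau)\to 0$'' is imprecise --- boundedness alone does not imply the derivative vanishes; what you actually use is that $\Psi_\lambda$ is monotone, hence has a limit, and that this combined with $\Phi_p(\tau)\to\Phi_p(+\infty)\in\R$ forces $\Phi_p'(\tau)\to 0$ (any nonzero limit of $\Phi_p'$ would contradict convergence of $\Phi_p$). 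Stating that inference explicitly would tighten the argument.
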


\begin{proof}
Let $\chi : [0, +\infty) \rightarrow \R$ be the  same smooth nonnegative cut-off function as in the 
proof of Theorem~\ref{monotone1}, so that the properties~\eqref{chi} are in force. To simplify the notation, let us also set 
\begin{equation}
\label{etalambda}
\eta_\lambda (\phi) \,\, = \,\, \frac{1}{ \,\, {\rm e}^{\frac{(n-p)}{(n-2)(p-1)}\phi} - \lambda\,\,}\, .
\end{equation}
Finally, let us consider the smooth vector field
\begin{equation}
\label{tildeY}
\widetilde{Y}_\lambda \,\,  = \,\,  \chi\big(\eta_\lambda(\phi) \, \abs{\nabla \phi}\big) \, Y_\lambda \, ,
\end{equation}
where $Y_\lambda$ has been defined in~\eqref{Ylambda}. Again, choosing $\epsilon$ small enough, we can suppose $\widetilde{Y}_\lambda = Y_\lambda$ on $\{\phi =s\}$ and $\{\phi = S\}$, with $s$ and $S$ as in the statement. Hence, applying the Divergence Theorem to the smooth vector field $\widetilde{Y}_\lambda$ on the region $\{ s< \phi <S\}$ gives
\begin{equation}
\label{divth2}
\begin{split}
\int\limits_{\{\phi=S\}} \!\!\!\! \left\langle \!Y_\lambda \,\bigg\vert \, \frac{\nabla \phi}{\abs{\nabla \phi}}\right\rangle &\dd\sigma \,\,  - \!\!\!\int\limits_{\{\phi=s\}} \!\!\!\! \left\langle \!Y_\lambda \,\bigg\vert \, \frac{\nabla \phi}{\abs{\nabla \phi}}\right\rangle   \dd\sigma
%\int_{\{\phi=s\}}\left\langle X \,\bigg\vert \, \frac{\nabla \phi}{\abs{\nabla \phi}}\right\rangle \dd\sigma 
\,\, = \!\!\!\!\!\!\int\limits_{\{s < \phi < S\}}\!\!\!\!\!\! \dive \widetilde{Y}_\lambda \, \dd\mu  \,\, = \\ 
&=\!\!\!\!\!\!\!\!\! \int\limits_{\{s < \phi < S\} \setminus U_{\epsilon/{2}}}\!\!\!\!\!\!\!\!\!\!\!\! \chi \big(\eta_\lambda(\phi) \, \abs{\nabla \phi}\big)\,\, \dive Y_\lambda \, \dd\mu  \,\,\, + \!\!\!\!\!\!\!\! \int\limits_{U_{{3}\epsilon/{2}} \setminus U_{\epsilon/{2}}} \!\!\!\!\!\!\!\! \dot\chi \big(\eta_\lambda(\phi) \, \abs{\nabla \phi}\big)
\, \Big\langle Y_\lambda \, \vert \, 
\nabla\big(\eta_\lambda(\phi)\abs{\nabla \phi}\big)\Big\rangle \dd\mu \, ,
\end{split}
\end{equation}
where this time the tubular neighbourhoods of $\crit(\phi)$ are defined, for every $\delta>0$, as $U_\delta = \{ \,  \eta_\lambda(\phi) \, \abs{\nabla \phi} \leq \delta \, \}$. Since, as observed in Lemma \ref{divYlambdalemma}, the divergence of $Y_\lambda$ is nonnegative on $\{s\leq \phi \leq S\} \setminus U_{\epsilon/{2}}$, where clearly $\abs{\nabla \phi} \neq 0$, the  inequality~\eqref{mon_2} is proved if we can show that 
\[
\left\langle Y_{\lambda} \, \Big\vert \, \nabla\Big(\eta_\lambda(\phi) \abs{\nabla \phi}\Big)\right\rangle \,\, \geq \,\,  0
\]
on ${U_{{3}\epsilon/{2}} \setminus U_{\epsilon/{2}}}$. A direct -- though not immediately evident --  computation, combined with the definition~\eqref{Ylambda} of $Y_\lambda$ yields
\begin{equation}
\label{alphaeta}
\begin{split}
\left\langle  \!Y_{\lambda} \, \Bigg\vert \, \nabla \!\left(\frac{\abs{\nabla \phi}}{{\rm e}^{\frac{(n-p)}{(n-2)(p-1)}\phi} \!\!-\lambda} \right)\right\rangle\,  =& \,\,\, {\rm e}^{-\frac{(n-p)}{(n-2)(p-1)} \phi} \,  \abs{\nabla \phi}^{p-2} \, \left[ \,\, \big\vert\nabla^\top \abs{\nabla \phi}\big\vert^2 \! + (p-1) \big\vert\nabla^\perp \abs{\nabla \phi}\big\vert^2
\,\, \right] \, + 
%\,e^{-\frac{(n-p)}{(n-2)(p-1)}\phi}\abs{\nabla \phi}^{p-2}\left[\Big\vert\nabla_T \abs{\nabla \phi}\Big\vert^2 + (p-1) \left\langle \nabla \abs{\nabla \phi} \,\bigg\vert \, \frac{\nabla \phi}{\abs{\nabla \phi}}\right\rangle^2\right] 
\\
&-2 \, \Big(\frac{n-p}{n-2}\Big) \, \eta_\lambda(\phi)\,\, \abs{\nabla \phi}^p \left\langle \nabla \abs{\nabla \phi} \, \bigg\vert \, \frac{\nabla \phi}{\abs{\nabla \phi}}\right\rangle \, +
\\
&+\Big(\frac{n-p}{n-2}\Big)^{\!2}\, \eta_\lambda^2(\phi)\,\,
\frac{\,{\rm e}^{\frac{(n-p)}{(n-2)(p-1)}\phi} \abs{\nabla \phi}^{p+2}}{p-1} \\
=&\,\,\, {\rm e}^{-\frac{(n-p)}{(n-2)(p-1)} \phi} \,  \abs{\nabla \phi}^{p-2} \,\, \big\vert\nabla^\top \abs{\nabla \phi}\big\vert^2 \, + \\
%\!\!\!\!\!\!=e^{-\frac{(n-p)}{(n-2)(p-1)}\phi}\abs{\nabla \phi}^{p-2}\Big\vert\nabla_T \abs{\nabla \phi}\Big\vert^2  \\
&+\Bigg[ \Big(\frac{n-p}{n-2}\Big) \, \eta_\lambda(\phi) \, \Bigg(\frac{{\rm e}^{\frac{(n-p)}{(n-2)(p-1)}\phi}\abs{\nabla \phi}^{{p+2}}}{(p-1)}\Bigg)^{\!\!1/2}\!\!\! - \\ 
& \qquad\quad - \,\, \left\langle \nabla \abs{\nabla \phi}\, \bigg\vert \, \frac{\nabla \phi}{\abs{\nabla \phi}}\right\rangle\Big({(p-1)} \,  {\rm e}^{-\frac{(n-p)}{(n-2)(p-1)}\phi} \, \abs{\nabla \phi}^{{p-2}}\Big)^{\!\!1/2} \, \Bigg]^2 
%\\
%&\!\!\!\!\!\!\geq \,  0 \, ,
\end{split}
\end{equation}
This completes the proof of the first part of the statement, since the rightmost hand side is manifestly nonnegative. It remains to show that $\Phi_p(+\infty) \leq \Phi_p(0)$. Applying the inequality~\eqref{mon_2} with $0< \lambda <1$, $s=0$ and $S_p \leq S$, we get
\begin{equation}
\label{passaggio}
\frac{(n-p)}{(n-2)(p-1)} \, \Big( \Phi_p (S) - \Phi_p(0)\Big)  \,\, \leq \,\, - \, (1-\lambda) \, \Phi_p'(0) \,  + \,  \Bigg(\frac{{\rm e}^{\frac{(n-p)}{(n-2)(p-1)}S} \!\! - \lambda}{{\rm e}^{\frac{(n-p)}{(n-2)(p-1)}S}}\Bigg) \, \Phi'_p(S) \, . 
\end{equation}
Observe now that \eqref{ineq-liv} holds also for $S_p < s < S$ (the cut-off argument is not even necessary in this case). Then, the very same reasoning employed to deduce that $\Phi_p'(0) \leq 0$ gives also $\Phi'_p (s) \leq 0$ for any $s > S_p$. In particular, $\Phi_p$ is a definitely bounded monotone function, and this implies  $\liminf_{S\to+\infty} \Phi_p'(S)\leq0$. Hence, passing to the (inferior) limit 
as $S\to+\infty$ in the above inequality yields
\[
\frac{(n-p)}{(n-2)(p-1)} \, \Big(\lim_{S\to +\infty} \!\!\Phi_p (S) 
- \Phi_p(0)\Big) \,\, \leq \,\,  - \, (1- \lambda) \, \Phi'_p(0) \, .
\]
Letting $\lambda \to 1^-$ on the right hand side leads to the second effective inequality $\Phi_p(+\infty) \leq \Phi_p(0)$.
\end{proof}

%%%%%%%%%%%%%%%%%%%%%%%%%%%%%%%%%%%%%%%%%%%%%%%%%%%

\subsection{Completion of the proof of Theorem~\ref{premink}}
\label{completion}

%%%%%%%%%%%%%%%%%%%%%%%%%%%%%%%%%%%%%%%%%%%%%%%%%%%

We are finally in the position to complete the proof of the $L^p$-Minkowski inequality, together with the related rigidity statement.

\begin{proof}[Proof of Theorem \ref{premink}]
To obtain inequality~\eqref{preminkf}, it is sufficient to detail the proof sketched in Subsection~\ref{sub:level}. As already observed in~\eqref{crucial_phi}, the effective inequalities obtained in Theorems~\ref{monotone1} and~\ref{monotone2} correspond to $U_p'(1) \geq 0$ and $U_p(0^+) \leq U_p(1)$, respectively. The first effective inequality implies that 
\[
\int\limits_{\partial \Omega}\Big(\frac{p-1}{n-p}\Big) \, {\abs{\D\log u}}^{p}\dd\sigma \, \,\leq \,\, \int\limits_{\partial \Omega}\abs{\D u}^{p-1}\frac{\HH}{n-1}\dd\sigma \, ,
\]
since a direct computation shows that 
\begin{equation}
U_p'(\tau) \,\, = \,\, \frac{1}{p-1} \, \tau^{-\frac{n-1}{n-p}}\int\limits_{\{u = \tau\}} \! \abs{\D u}^{p-1}
\left[\HH - \frac{(p-1)(n-1)}{(n-p)} \abs{\D \log u}\right] \dd\sigma \, .
\end{equation}
Applying the H\"older inequality to the above right hand side, with conjugate exponents $a = p/(p-1)$ and $b =p$, one is left with
\begin{equation}
\label{step1}
\int\limits_{\partial \Omega} \abs{\D u}^{p} \dd\sigma \,\,  \leq \,\, \Big(\frac{n-p}{p-1}\Big)^{p}  \int\limits_{\partial \Omega} \left\vert \frac{\HH}{n-1}\right\vert^{p} \dd\sigma \, .
\end{equation}
Using the second effective inequality $U_p (0^+) \leq U_p(1)$ in combination with~\eqref{eq:limup} we get 
\begin{equation}
\Big(\frac{n-p}{p-1}\Big)^{\!p} \,  \abs{\Sf^{n-1}}\,\, {\rm C}_p (\Omega)^{\frac{n-p-1}{n-p}}  \, = \,  \lim_{\tau \to 0^+} U_p(\tau) \,\, \leq \,\, U_p(1) \, = \, \int\limits_{\partial \Omega} \abs{\D u}^{p} \dd\sigma \, ,
\end{equation}
that combined with~\eqref{step1} gives the desired
\begin{equation}
{\rm C}_p (\Omega)^{\frac{n-p-1}{n-p}}  \leq \,  \frac{1}{\,  \abs{\Sf^{n-1}} \,} \, \int\limits_{\partial \Omega} \,\left\vert \frac{\HH}{n-1}\right\vert^{p}\! \dd\sigma  \, .
%\label{pre-mink}
%{\rm C}_p (\Omega)^{1 - \frac{1}{n-p}} \abs{\Sf^{n-1}} \,\, \leq \,\, \int\limits_{\partial \Omega} \left\vert \frac{\HH}{n-1}\right\vert^{p} \dd\sigma.
\end{equation}
{Assume now that equality holds in \eqref{premink}. Then, equality holds in \eqref{step1}, and consequently  $U_p'(1) = \Phi'_p (0) = 0$. Let $s^* \in (0, + \infty]$ be the first critical value of $\phi$.  A straightforward perusal of the proof of Theorem \ref{monotone1} shows that $\dive_{\!g} X = 0$ in $\{\phi \leq s\}$ for any $s < s^*$. By \eqref{divX} and \eqref{katof} we deduce that $\abs{\nabla \nabla \phi}_g = 0$ in this region. Then, a very standard argument (see e.g. the proof of \cite[Theorem 4.1 (i)]{Ago_Maz_1}) shows that $(\{\phi \leq s\}, g)$ is isometric to the cylinder $\left([0, s] \times \{\phi = 0\}, dt \otimes dt + g_{\{\phi = 0\}}\right)$, and that 
$\abs{\nabla \phi}_g$ equals a (positive) constant  in this region. The existence of a critical value $s^* < + \infty$ would thus contradict the continuity of $\abs{\nabla \phi}_g$, that follows from the $\mathscr{C}^{1}$-regularity of $p$-harmonic functions. Then, $\abs{\nabla \nabla \phi}_g = 0$ on the whole $\R^n \setminus \overline{\Omega}$, and then \cite[Theorem 4.1 (ii)]{Ago_Maz_1} implies that $\partial \Omega$ is a sphere.}
\end{proof}

%%%%%%%%%%%%%%%%%%%%%%%%%%%%%%%%%%%%%%%%%%%%%%%%%%%%%
%%%%%%%%%%%%%%%%%%%%%%%%%%%%%%%%%%%%%%%%%%%%%%%%%%%%%

\section{Proof of the Extended Minkowski Inequality}
\label{sec:min}

%%%%%%%%%%%%%%%%%%%%%%%%%%%%%%%%%%%%%%%%%%%%%%%%%%%%%
%%%%%%%%%%%%%%%%%%%%%%%%%%%%%%%%%%%%%%%%%%%%%%%%%%%%%

In this section we derive the Extended Minkowski Inequality~\eqref{minkowskif}
\begin{equation*}
%\label{minkowskif}
\left(\frac{|\pa \Omega^*|}{|\Sf^{n-1}|}\right)^{\!\!\frac{n-2}{n-1}} \!\!\! \leq \, \frac {1}{|\Sf^{n-1}|} \,\, \int\limits_{ \pa \Omega}   \left|\frac{\HH}{n-1}\right|  \, \dd \sigma \, ,
\end{equation*}
by letting $p \to 1^+$ in the $L^p$-Minkowski Inequality~\eqref{preminkf}. The main task here (see Theorem~\ref{limit-pcapth}) is to compute -- and characterise geometrically -- the limit of the variational $p$-capacity of a bounded set with smooth boundary. 
%Also in view of the applications proposed in Section~\ref{sec:cor}, 
As we are going to see, this limit turns out to be related to the {\em strictly outward minimising hull} of $\Omega$, a notion that plays a central role in the formulation of the weak Inverse Mean Curvature Flow introduced in~\cite{Hui_Ilm}.

%%%%%%%%%%%%%%%%%%
%%%%%%%%%%%%%%%%%%
\subsection{(Strictly) outward minimising sets and the strictly outward minimising hull}
%%%%%%%%%%%%%%%%%%
%%%%%%%%%%%%%%%%%%
The notion of {\em outward minimising sets} and {\em strictly outward minimising sets} are given in the context of sets with finite perimeter. 
We refer the reader to \cite{maggi} for a comprehensive treatment
of the basic notions that we are going to recall.
\begin{definition}[Outward minimising and strictly outward minimising sets]
\label{def:out}
Let $E \subset \R^n$ be a bounded measurable set with finite perimeter. We say that $E$ is \emph{outward minimising} if for any $F \subset \R^n$ with $E \subseteq F$ we have $\abs{\partial^*E} \leq \abs{\partial^*F}$, where by $\partial^*F$ we denote the reduced boundary of a set $F$ . We say that $E$ is \emph{strictly outward minimising} if it is outward minimising and any time $\abs{\partial^*E} = \abs{\partial^*F}$ for some $F \subset\R^n$ with $E \subseteq F$ we have $\abs{F \setminus E} = 0$.
\end{definition}  
It is easily seen that a bounded open set with finite perimeter is (strictly) outward minimising if and only if any measure zero modification of it is (strictly) outward minimising. In order to define appropriate representatives for these sets, we recall the definition of the \emph{measure theoretic interior} of a set $E$ with $\abs{E} < + \infty$ as the points of density $1$ for $E$, namely
\begin{equation}
\label{interior}
\text{Int} (E) = \left\{x \in\R^n \ \Big\vert \ \lim_{r \to 0^+} \frac{\abs{E \cap B(x, r)}}{\abs{B(x, r)}}=1\,\right\}.
\end{equation}
It follows from Lebesgue Differentiation Theorem (see \cite[Theorem 5.16]{maggi}) that 
\begin{equation}
\label{representative}
\abs{E \Delta \, \text{Int} (E)} = 0 \, .
\end{equation}
Importantly, 
a set with finite perimeter $E$ satisfies,
\begin{equation}
\label{topint}
\partial \,\text{Int} (E) = \overline{\partial^* E},
\end{equation}
that is, the topological boundary of the measure theoretic interior of a set with finite perimeter coincides with the closure of its reduced boundary. We address the reader to \cite[Theorem 10]{caraballo} for a proof of this nice property. We are now ready to define the {\em strictly outward minimising hull} of a set. As it can be checked, 
this concept essentially coincides with the one 
outlined in~\cite[p. 371]{Hui_Ilm}.
%The following is the definition, given in \cite[p. 371]{Hui_Ilm}, of the strictly outward minimising hull.

\begin{definition}[Strictly outward minimising hull]
\label{smh-def}
Let $\Omega \subset \R^n$ be a bounded open set with smooth boundary. We define the \emph{strictly outward minimising hull} of $\Omega$ as the measure theoretic interior of a set $E$ solving the minimisation problem
\begin{equation}
\label{def:somh}
\inf_{E \in \mathrm{SOMBE}(\Omega)} \abs{E} ,
\end{equation}
where 
\[
\mathrm{SOMBE} \, (\Omega) 
\,=\, 
\big\{E \, \vert \, \Omega \subseteq E \, \text{and} \,\, E \, \, \text{is bounded and strictly outward minimising}\big\}.
\]
\end{definition}
According to \cite{fogagnolo-mazzieri-hulls} (see also \cite{Hui_Ilm}), $\Omega^*$  is a solution of the area minimisation problem with obstacle $\Omega$, that is
\begin{equation}
\label{obstacle}
\abs{\partial^*\Omega^*} 
\,=\, 
\inf\big\{\
\abs{\partial^* F} \,\, \big\vert \,\, \Omega \subseteq F
\big\}. 
\end{equation}
We recall that the main result in~\cite{sternberg-williams}
(see also the comprehensive \cite[Theorem 1.3]{Hui_Ilm})
provides us with a regularity result for
any solution $E$ to \eqref{obstacle}
such that $\partial E = \overline{\partial^* E}$.
Note that $\Omega^*$ fulfils this requirement
(in view of~\eqref{topint} combined with the fact that $\rm{Int} (\Omega^*) = \Omega^*$, due to its very definition.
The topological boundary $\partial \Omega^*$ 
is thus equipped with the following regularity. 

\begin{theorem}[Regularity of the strictly outward minimising hull]
\label{regularity-hulls}
Let $\Omega \subset \R^n$ be a bounded set with smooth boundary. Then
\begin{itemize}
\item[(i)] $\partial \Omega^*$ is a $\mathscr{C}^{1, 1}$ hypersurface 
in a neighbourhood of any point in $\partial \Omega^* \cap \partial \Omega$.
\item[(ii)] $\partial \Omega^*$ is area minimising in $\partial \Omega^* \setminus \partial \Omega$. In particular there exists a singular set 
${\rm Sing} \subset \partial \Omega^* \setminus \partial\Omega$, 
with Hausdorff dimension at most $n- 8$, such that $\partial \Omega^* \setminus \partial \Omega$ is a real analytic hypersurface in a neighbourhood of any point in  $(\partial \Omega^* \setminus \partial \Omega) \setminus{\rm Sing}$.
\end{itemize}
\end{theorem}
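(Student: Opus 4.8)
The plan is to obtain both statements by localising the regularity theory for the obstacle problem \eqref{obstacle}, once we have checked that $\Omega^*$ is an admissible minimiser to which that theory applies. First I would recall, as already pointed out in the discussion preceding the statement, that $\Omega^*$ solves the area minimisation problem with obstacle $\Omega$ (this is \eqref{obstacle}, established in \cite{fogagnolo-mazzieri-hulls}, see also \cite{Hui_Ilm}), and that $\partial \Omega^* = \overline{\partial^* \Omega^*}$, which follows from \eqref{topint} together with $\mathrm{Int}(\Omega^*) = \Omega^*$. Hence $\Omega^*$ satisfies the standing hypothesis of the main result of \cite{sternberg-williams} (see also \cite[Theorem 1.3]{Hui_Ilm}), whose conclusions we now read off.

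For part (i), fix a point $x \in \partial \Omega^* \cap \partial \Omega$, so that the obstacle is touched at $x$. Since $\partial \Omega$ is smooth, hence in particular of class $\mathscr{C}^{1,1}$, the quoted obstacle-problem regularity yields that $\partial \Omega^*$ is of class $\mathscr{C}^{1,1}$ in a neighbourhood of $x$. This is the optimal regularity available at such points, since $\partial \Omega^*$ may detach from the obstacle along a free boundary across which the second fundamental form jumps; I would record that the hypotheses of \cite{sternberg-williams} match and simply invoke the statement, with no further computation.

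For part (ii), I would observe that on the relatively open set $\partial \Omega^* \setminus \partial \Omega$ the obstacle constraint is inactive. Indeed, for any $y \in \partial \Omega^* \setminus \partial \Omega$ there is a small ball $B$ centred at $y$ with $\overline B \cap \overline\Omega = \emptyset$, and every competitor $F$ with $F \Delta \Omega^* \Subset B$ still contains $\Omega$; minimality of $\Omega^*$ in \eqref{obstacle} therefore forces $\Omega^*$ to minimise perimeter in $B$ without constraint. Consequently $\partial \Omega^* \cap B$ is a codimension-one area-minimising boundary, and the classical interior regularity theory for such boundaries (De~Giorgi, Federer, Almgren, Simons; see e.g.\ \cite{maggi}) applies: $\partial \Omega^* \setminus \partial \Omega$ is a smooth embedded hypersurface away from a relatively closed singular set $\mathrm{Sing}$ of Hausdorff dimension at most $n-8$ (empty when $n \le 7$), and since minimal hypersurfaces solve an analytic elliptic equation on their regular part, they are real analytic there.

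\textbf{Main obstacle.} The only genuinely delicate point is the interface between the two regimes, namely checking that the $\mathscr{C}^{1,1}$ description near $\partial\Omega^*\cap\partial\Omega$ is compatible with the smooth-up-to-$\mathrm{Sing}$ description on $\partial\Omega^*\setminus\partial\Omega$, and in particular that the singular set is confined to $\partial\Omega^*\setminus\partial\Omega$. This is precisely what is built into the formulation of \cite{sternberg-williams} and \cite[Theorem 1.3]{Hui_Ilm}, so the argument ultimately amounts to quoting those results after the above verification of their applicability; I expect this bookkeeping, rather than any new estimate, to be the point requiring care.
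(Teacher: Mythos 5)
Your proposal is correct and follows essentially the same route as the paper: the paper proves this theorem simply by noting that $\Omega^*$ solves the obstacle problem \eqref{obstacle} (citing \cite{fogagnolo-mazzieri-hulls} and \cite{Hui_Ilm}), verifying $\partial\Omega^*=\overline{\partial^*\Omega^*}$ via \eqref{topint} and $\mathrm{Int}(\Omega^*)=\Omega^*$, and then quoting the regularity result of \cite{sternberg-williams} (cf.\ \cite[Theorem 1.3]{Hui_Ilm}). Your additional unpacking of part (ii) — showing the constraint is inactive away from $\overline\Omega$ and invoking classical interior regularity for perimeter minimisers — is a correct and standard elaboration of what the cited results already contain, not a different argument.
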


An obvious consequence of this theorem is the fact that $\abs{\partial^* \Omega^*} = \abs{\partial \Omega^*}$.
\begin{remark}
\label{alt-def}
Since, as already pointed out, the boundary of $\Omega^*$ 
has least area among sets enclosing $\Omega$, we have that a bounded open set $\Omega \subset \R^n$ with smooth boundary is outward minimising if and only if  
\begin{equation}
\label{alt-def-formula}
\abs{\partial \Omega} = \abs{\partial \Omega^*}. 
\end{equation}
Observe that the inequality $\abs{\partial \Omega^*} \leq \abs{\partial \Omega}$ is automatically satisfied, due to \eqref{obstacle}.
\end{remark}
Since $\abs{\partial^* \Omega^*} = \abs{\partial \Omega^*}$ we can apply the nice interior approximation result \cite[Theorem 1.1]{schmidt} to $B(x, R) \setminus \overline{\Omega^*}$, with $\Omega^* \Subset B(x, R)$, to obtain the following exterior approximation result.
\begin{lemma}[Smooth exterior approximation of the strictly outward minimising hull]
\label{appr}
Let $\Omega\subset \R^n$ be a bounded open set with smooth boundary. Then, there exists a 
sequence of bounded sets $\{\Omega_k\}_{k \in \N}$ with smooth boundary such that 
\begin{equation}
\label{apprf}
 \Omega^* \subset \Omega_k, \quad \abs{\partial\Omega_k} \to \abs{\partial\Omega^*}.
 \end{equation}
%Observe also that, in this case, the mean curvature of $\partial \Omega$ is nonnegative, as a standard fist variation argument immediately shows.
\end{lemma}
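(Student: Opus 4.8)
The plan is to obtain the sequence $\{\Omega_k\}$ by applying Schmidt's strict interior approximation theorem \cite{schmidt} to the complement of $\Omega^*$ inside a large ball. Fix $x \in \R^n$ and $R>0$ so large that $\Omega^* \Subset B(x,R)$, and set $A := B(x,R) \setminus \overline{\Omega^*}$. Since $\Omega^* \Subset B(x,R)$, the reduced boundary of $A$ inside $B(x,R)$ coincides, up to $\Haus^{n-1}$-negligible sets, with $\partial^*\Omega^*$, while the topological boundary of $A$ inside $B(x,R)$ is exactly $\partial\Omega^*$; hence the equality $\abs{\partial^*\Omega^*} = \abs{\partial\Omega^*}$ recorded just above the statement is precisely the hypothesis needed to invoke \cite[Theorem 1.1]{schmidt} in the ambient open set $B(x,R)$. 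First I would apply that theorem to produce smooth open sets $A_k \subset A$ with $\chi_{A_k} \to \chi_A$ in $L^1(B(x,R))$ and $P(A_k; B(x,R)) \to P(A; B(x,R)) = \Haus^{n-1}(\partial^*\Omega^*) = \abs{\partial\Omega^*}$, the interior modification being localised near the singular portion $\partial\Omega^*$ of $\partial A$, so that $A_k$ may be taken to contain the collar $B(x,R) \setminus \overline{B(x, R - 1/k)}$ and thus to coincide with $A$ near the regular boundary piece $\partial B(x,R)$.

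Next I would set $\Omega_k := B(x,R) \setminus \overline{A_k}$. This is a bounded open set, and since $A_k \subset A$ forces $\overline{A_k}$ to be disjoint from the open set $\Omega^*$, we get $\Omega^* \subset \Omega_k$, i.e. the first assertion in \eqref{apprf}. Because $\overline{A_k}$ contains a neighbourhood of $\partial B(x,R)$ inside $B(x,R)$, no point of $\partial B(x,R)$ lies in $\partial\Omega_k$, whence $\partial\Omega_k = \partial A_k \cap B(x,R)$ is a smooth hypersurface and $\abs{\partial\Omega_k} = P(A_k; B(x,R))$. On the one hand this identity, together with $P(A_k;B(x,R)) \to \abs{\partial\Omega^*}$, gives $\limsup_k \abs{\partial\Omega_k} \le \abs{\partial\Omega^*}$. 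On the other hand $\chi_{A_k} \to \chi_A$ in $L^1$ yields $\chi_{\Omega_k} \to \chi_{\Omega^*}$ in $L^1$, so lower semicontinuity of the perimeter gives $\liminf_k \abs{\partial\Omega_k} \ge \abs{\partial^*\Omega^*} = \abs{\partial\Omega^*}$. Combining the two bounds produces $\abs{\partial\Omega_k} \to \abs{\partial\Omega^*}$, which is the second assertion in \eqref{apprf}.

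The point that I expect to require the most care is the precise form in which \cite[Theorem 1.1]{schmidt} is used: one must ensure that the approximating sets can be chosen so that the genuinely ``interior'' part of the modification takes place only near $\partial\Omega^*$, leaving $A_k$ equal to $A$ near the smooth sphere $\partial B(x,R)$. This localisation is exactly what prevents a spurious contribution of size $\abs{\partial B(x,R)}$ from entering $\abs{\partial\Omega_k}$, guarantees that each $\overline{A_k}$ genuinely surrounds $\Omega^*$, and keeps the sets $\Omega_k$ bounded. Should the cited theorem be quoted in a form that does not directly furnish this localisation, the same conclusion can be reached by replacing $\Omega_k$ with the union of those connected components of $B(x,R)\setminus\overline{A_k}$ that meet $\Omega^*$, at the cost of a short argument — based on the perimeter convergence rather than mere $L^1$ convergence — ruling out thin channels joining $\Omega^*$ to the region near $\partial B(x,R)$; everything else in the proof is routine.
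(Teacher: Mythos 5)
Your construction is the same as the paper's: the paper reduces the lemma to a one‑line appeal to Schmidt's strict interior approximation theorem applied to $A := B(x,R)\setminus\overline{\Omega^*}$ inside $B(x,R)$, and then takes complements -- exactly your first two paragraphs. You are right that the localisation of the Schmidt approximation away from $\partial B(x,R)$ is the point that actually needs attention: if $A_k$ does not contain a full collar of $\partial B(x,R)$, then $\partial\Omega_k$ picks up a piece of $\partial B(x,R)$, spoiling both smoothness and the perimeter computation. This is a genuine subtlety that the paper simply glosses over.

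Where I do not follow you is the proposed fallback, ``ruling out thin channels joining $\Omega^*$ to the region near $\partial B(x,R)$ \dots based on the perimeter convergence.'' That argument is fine for $n=2$, where a channel of positive length has lateral boundary of length bounded below by twice the distance from $\overline{\Omega^*}$ to $\partial B(x,R)$, so it must cost a definite amount of perimeter. But in $\R^n$ with $n\ge 3$ a tube of radius $\epsilon_k\to 0$ running from $\overline{\Omega^*}$ to $\partial B(x,R)$ has $(n-1)$--dimensional lateral area of order $\epsilon_k^{\,n-2}\to 0$ and volume of order $\epsilon_k^{\,n-1}\to 0$, so both the $L^1$ convergence $\chi_{A_k}\to\chi_A$ and the convergence $P(A_k;B(x,R))\to\abs{\partial\Omega^*}$ are perfectly compatible with the component of $B(x,R)\setminus\overline{A_k}$ containing $\Omega^*$ reaching $\partial B(x,R)$. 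Moreover, even a ``thick'' channel would leave $\partial\Omega_k$ with a nonsmooth portion on $\partial B(x,R)$, which cannot be dismissed as a null perturbation. So the fallback, as stated, does not close the gap.

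The sound way to finish is either your primary option -- read off from Schmidt's construction that the approximants can be taken to agree with $A$ in a collar of $\partial B(x,R)$, which is what the paper implicitly assumes -- or the following elementary coarea fix. Since the annulus $B(x,R)\setminus\overline{B(x,R-\delta)}$ is contained in $A$ and $\abs{A\setminus A_k}\to 0$, Fubini and Chebyshev give radii $\rho_k\in(R-\delta,R)$, which by Sard's theorem may be chosen so that $\partial B(x,\rho_k)$ meets $\partial A_k$ transversally, with $\Haus^{n-1}\big(\partial B(x,\rho_k)\setminus A_k\big)\to 0$. Setting $\Omega_k := B(x,\rho_k)\setminus\overline{A_k}$ one gets bounded open sets with piecewise smooth boundary containing $\Omega^*$, whose perimeters converge to $\abs{\partial\Omega^*}$ by the squeeze between lower semicontinuity and the explicit bound $P(\Omega_k) \le P(A_k;B(x,R)) + \Haus^{n-1}\big(\partial B(x,\rho_k)\setminus A_k\big)$; smoothing the corners concludes. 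Either route is fine, but the ``thin channels'' shortcut should be dropped.
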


%%%%

\subsection{Minimising hulls and $p$-capacities.}
Let $\Omega \subset \R^n$ be a bounded open set with smooth boundary. Recall from Definition~\ref{pcap} that for $1<p <n$ one has
\begin{equation}
\label{pcap-fin}
\capa_p(\Omega) \,  = \, \inf\left\{ \int_{\R^n} \abs{ \D f}^p \dd\mu \,\, \Big| \ f \geq \chi_\Omega, \,\, f \in \mathscr{C}^\infty_c(\R^n) \right\}.
\end{equation}
We can define, analogously, the $1$-capacity of a bounded open set with smooth boundary $\Omega$ as
\begin{equation}
\label{cap1-fin}
\capa_1(\Omega) \, = \, \inf\left\{ \int_{\R^n} \abs{ \D f} \dd\mu \,\, \Big| \ f \geq \chi_\Omega, \,\, f \in \mathscr{C}^\infty_c(\R^n) \right\}.
\end{equation}
The following result says that $\capa_1(\Omega)$
can indeed be recovered as the limit of
$\capa_p(\Omega)$, as $p \to 1^+$,
and that these quantities are also
related with the strictly 
outward minimising hull of $\Omega$.
\begin{theorem}
\label{limit-pcapth}
Let $\Omega \subset \R^n$ be a bounded open set with smooth boundary. Then,
\begin{equation}
\label{limit-pcap}
\lim_{p \to 1^+} \capa_p(\Omega) = \abs{\partial \Omega^*},
\end{equation}
where $\Omega^*$ is the strictly 
outward minimising hull of $\Omega$.
\end{theorem}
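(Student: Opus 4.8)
### Plan of Proof for Theorem~\ref{limit-pcapth}

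The plan is to prove the two inequalities $\limsup_{p\to 1^+}\capa_p(\Omega)\le\abs{\partial\Omega^*}$ and $\liminf_{p\to 1^+}\capa_p(\Omega)\ge\abs{\partial\Omega^*}$ separately, using on one side the smooth exterior approximation of $\Omega^*$ and on the other a coarea/lower-semicontinuity argument passing through $\capa_1(\Omega)$. Throughout, it is convenient to keep in mind the identity $\capa_1(\Omega)=\abs{\partial\Omega^*}$, which is essentially the content of the obstacle characterisation~\eqref{obstacle} together with the coarea formula: minimising $\int\abs{\D f}$ over $f\ge\chi_\Omega$ is, by the coarea formula $\int\abs{\D f}=\int_0^1\abs{\partial^*\{f>t\}}\,dt$, the same as minimising the perimeter of a set enclosing $\Omega$, whose value is $\abs{\partial^*\Omega^*}=\abs{\partial\Omega^*}$ by Theorem~\ref{regularity-hulls}. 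So the real task is to show $\capa_p(\Omega)\to\capa_1(\Omega)$.

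First I would prove the upper bound. Fix a competitor: by Lemma~\ref{appr} there is a sequence $\{\Omega_k\}$ of smooth bounded sets with $\Omega^*\subset\Omega_k$ and $\abs{\partial\Omega_k}\to\abs{\partial\Omega^*}$. For each fixed $k$, one can build an explicit test function $f$ — e.g. a smooth approximation of the function that is $1$ on $\Omega_k$ and decays radially like the $p$-capacitary profile of $\Omega_k$ outside — and compute $\int\abs{\D f}^p$; alternatively, and more cleanly, one takes $f_k$ to be (a mollification of) the signed-distance cutoff $\min\{1,(1-\dist(x,\Omega_k)/\delta)_+\}$, whose gradient has modulus $\approx 1/\delta$ on a shell of width $\delta$ around $\partial\Omega_k$ and volume $\approx\delta\abs{\partial\Omega_k}$, giving $\int\abs{\D f_k}^p\,d\mu\approx \delta^{1-p}\abs{\partial\Omega_k}\to\abs{\partial\Omega_k}$ as $p\to 1^+$ (for fixed $\delta$), and then sending $\delta\to 0$. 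Since $\Omega\subseteq\Omega^*\subseteq\Omega_k$, these $f_k$ are admissible for $\capa_p(\Omega)$, so $\limsup_{p\to1^+}\capa_p(\Omega)\le\abs{\partial\Omega_k}$ for each $k$, hence $\limsup_{p\to1^+}\capa_p(\Omega)\le\abs{\partial\Omega^*}$ after letting $k\to\infty$. Care is needed to interchange the $p\to1^+$ and $\delta\to0$ limits, but a diagonal choice $\delta=\delta(p)\to0$ slowly enough handles this.

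Next I would prove the lower bound, which I expect to be the main obstacle. Take, for each $p\in(1,2)$, a near-optimal test function $f_p\in\mathscr{C}^\infty_c(\R^n)$ with $f_p\ge\chi_\Omega$ and $\int\abs{\D f_p}^p\,d\mu\le\capa_p(\Omega)+o(1)$; one may assume $0\le f_p\le1$ by truncation. By Hölder's inequality on the (a priori unbounded, but effectively controlled) region where $\D f_p\ne 0$, one relates $\int\abs{\D f_p}$ to $\int\abs{\D f_p}^p$; the difficulty is that the naive Hölder estimate picks up the measure of the support, which is not uniformly bounded. The way around this is to localise: the competitors can be taken supported in a fixed large ball $B_R$ (enlarging $\Omega$ to $\Omega^*$ changes nothing and $\Omega^*$ is bounded), after which $\int_{B_R}\abs{\D f_p}\le \abs{B_R}^{1-1/p}\big(\int\abs{\D f_p}^p\big)^{1/p}$, so $\capa_1(\Omega)\le\abs{B_R}^{1-1/p}(\capa_p(\Omega)+o(1))^{1/p}$, and letting $p\to1^+$ gives $\abs{\partial\Omega^*}=\capa_1(\Omega)\le\liminf_{p\to1^+}\capa_p(\Omega)$. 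Here one must justify that restricting to competitors supported in $B_R$ does not increase $\capa_p(\Omega)$ in the limit — this follows because the $p$-capacitary potential $u_p$ of $\Omega$ decays like $\abs{x}^{-(n-p)/(p-1)}$ by Lemma~\ref{asyu}, so its energy outside $B_R$ is negligible, and a cutoff of $u_p$ at radius $R$ is an almost-optimal competitor; alternatively one invokes that $\capa_p(\Omega)=\capa_p(\Omega^*)$ (monotonicity of $p$-capacity under the hull operation, which itself needs the obstacle structure) to pass directly to the bounded, nicer set $\Omega^*$. Assembling the two bounds yields~\eqref{limit-pcap}, and the auxiliary identity $\lim_{p\to1^+}\capa_p(\Omega)=\capa_1(\Omega)=\abs{\partial\Omega^*}$ is recorded along the way.
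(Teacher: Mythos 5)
Your upper bound (distance-function cutoffs around the smooth exterior approximations $\Omega_k$ of Lemma~\ref{appr}) and your reduction of $\capa_1(\Omega)$ to $|\partial\Omega^*|$ via coarea and the obstacle property are both essentially the paper's argument. For the lower bound $\capa_1(\Omega)\leq\liminf_{p\to1^+}\capa_p(\Omega)$, however, your route is genuinely different from the paper's. The paper avoids localisation entirely: starting from an arbitrary competitor $f\geq\chi_\Omega$, it inserts $f^q$ with $q=1+p^*(p-1)/p$ into the definition of $\capa_1$, applies H\"older with exponents $p/(p-1)$ and $p$, and then bounds the resulting $\|f\|_{L^{p^*}}$ term via the Sobolev inequality with Talenti's sharp constant ${\rm T}_{n,p}$. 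Since $q\to1$ and ${\rm T}_{n,p}^{\,q-1}\to1$ as $p\to1^+$, this gives $\capa_1(\Omega)\leq q\,{\rm T}_{n,p}^{\,q-1}\capa_p(\Omega)^{(n-1)/(n-p)}$ and the result follows. The Sobolev constant plays exactly the role that the volume $|B_R|^{1-1/p}$ plays in your argument, but without needing to confine the competitor to a fixed ball.

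Your localisation can be made to work, but it needs more care than your sketch conveys, and one clause as written is actually wrong. The statement that ``restricting to competitors supported in $B_R$ does not increase $\capa_p(\Omega)$'' is false: the relative $p$-capacity in a fixed ball is strictly larger. What is true, and what you should prove, is that the discrepancy vanishes as $p\to1^+$: take the level cutoff $v_p=\max\{(u_p-\epsilon)/(1-\epsilon),0\}$ (with $u_p\equiv1$ on $\Omega$). Comparing with the explicit exterior $p$-capacitary potential of a large ball $B_\rho\supset\Omega$ gives $u_p(x)\leq(\rho/|x|)^{(n-p)/(p-1)}$, so $\{u_p>\epsilon\}\subset B_{\rho\,\epsilon^{-(p-1)/(n-p)}}$, which stays inside a fixed ball for $p$ near $1$; and $\int|\D v_p|^p=(1-\epsilon)^{-p}\int_{\{u_p>\epsilon\}}|\D u_p|^p\leq(1-\epsilon)^{1-p}\capa_p(\Omega)\to\capa_p(\Omega)$ in the limit. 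Your naive radial cutoff argument also works, using the same comparison to control the tail term, but either way this requires writing out. Finally, the parenthetical fallback ``$\capa_p(\Omega)=\capa_p(\Omega^*)$'' should be deleted: it is false for $p>1$ (already for a dumbbell), and even if it held, replacing $\Omega$ by the bounded set $\Omega^*$ would not localise the competitors, which are still $\mathscr{C}^\infty_c$ functions on all of $\R^n$. So the primary localisation argument is the one to pursue; the paper's Sobolev trick is the price one pays to skip it.
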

\begin{proof}
Let us first observe that for any $f \in \mathscr{C}^\infty_c(\R^n)$ with $f \geq \chi_\Omega$ we have, by co-area formula,
\begin{equation}
\label{1-ineq-pre}
\int\limits_{\R^n} \abs{\D f} \dd\sigma 
\,\geq\, 
\int_0^1 \abs{\{f = t\}} \dd t 
\,\geq\, 
\inf \Big\{ \abs{\partial E}\ \big|\ {\Omega} \subset E, \, \partial E \, \text{smooth}\Big\} 
\,\geq\, \abs{\partial \Omega^*},
\end{equation}
where  the last equality is due to Lemma \ref{appr}. In particular, taking the infimum over any such $f$, we get
\begin{equation}
\label{1-ineq}
\abs{\partial \Omega^*} \leq \capa_1(\Omega).
\end{equation}

We now prove that 
\begin{equation}
\label{liminf}
\capa_1(\Omega) \leq \liminf_{p \to 1^+} \capa_p(\Omega).
\end{equation}
This will be done by passing to the limit as $p\to 1^+$ in the inequality appearing in the proof of \cite[Theorem 3.2]{xu}, 
keeping track of the appearing constants
(which results in inequality \eqref{xu3} below). 
Namely, for every $f \in \mathscr{C}^\infty_c(\R^n)$ with $f \geq \chi_\Omega$
and any positive exponent $q$, the function $f^q$ is an admissible competitor in \eqref{pcap-fin} and \eqref{cap1-fin}. Then, 
by definition of the $1$-capacity and by 
H\"older inequality we have
\begin{equation}
\label{xu1}
\capa_1(\Omega) \leq \int\limits_{\R^n} \abs{\D f^q} \dd \mu = q \int\limits_{\R^n} f^{q-1} \abs{\D f} \dd\mu \leq q \left(\int_{\R^n}f^{{(q-1)}\frac{p}{p-1}} \dd\mu\right)^{{(p-1)}/{p}} \left(\int_{\R^n} \abs{\D f}^{p} \dd\mu\right)^{1/p}\!\!\!.
\end{equation}
Let now $q$ satisfy ${(q-1)}{p}/{(p-1)} = p^*$, where $p^*=pn/(n-p)$ is the Sobolev conjugate exponent of $p$, that is 
\begin{equation}
\label{q}
q = 1 + p^*\frac{(p-1)}{p}.
\end{equation}
Observe that with this choice $q > 1$. Then, we obtain, applying the Sobolev inequality to the first integral in the right hand side of \eqref{xu1},
\begin{equation}
\label{xu2}
\capa_1(\Omega) 
\,\leq q \,
{\rm T}_{n, p}^{\,p^*(p-1)/p}
\left(\,\,
\int\limits_{\R^n} \abs{\D f}^{p} \dd\mu\right)^{\!\!p^*(p-1)/p^2 + 1/p} \!\!\!\!\!\!\!\!\!\!\!\!\! =  
\,\,\, q \,\, {\rm T}_{n, p}^{\,q-1}
\left(\,\,
\int\limits_{\R^n} \abs{\D f}^{p} \dd\mu\right)^{\!\!(n-1)/(n-p)},
\end{equation}
where ${\rm T}_{n, p}$ is Talenti's best constant in the Sobolev inequality, 
obtained in \cite{talenti}. 
We recall that the precise value of such constant is 
\begin{equation}
{\rm T}_{n, p} = \frac{1}{\pi^{1/2} n^{1/p}}\left(\frac{p-1}{n-p}\right)^{(p-1)/p}\left[\frac{\Gamma(1 + n/2)\Gamma(n)}{\Gamma(n/p)\Gamma( 1 + n - n/p)}\right]^{1/n},
\end{equation}
where $\Gamma$ is Euler's Gamma function.
Taking the infimum over any $f \in \mathscr{C}^{\infty}_c(\R^n)$ with $f \geq \chi_{\Omega}$ in \eqref{xu2},
we obtain
\begin{equation}
\label{xu3}
\capa_1(\Omega) 
\,\leq\, 
q \,{\rm T}_{n, p}^{\,q-1} \capa_p(\Omega)^{(n-1)/(n-p)}.
\end{equation}
As $p \to 1^+$, one can check that ${\rm T}_{n, p}$ converges to a positive constant, precisely (compare with \cite[p. 355]{talenti})
\[
\lim_{p\to 1^+}{\rm T}_{n, p} 
\,=\, 
\left(\frac{1}{\abs{\Sf^{n-1}}}\right)^{\!1/n},
\]
and this implies 
\begin{equation}
\label{lim-tal}
\lim_{p\to 1^+}{\rm T}_{n, p}^{\,q-1} = 1.
\end{equation}
Note that $q\to1$, as $p \to 1^+$, in view of \eqref{q}.
In turn, passing to the limit in \eqref{xu3}, we get
\eqref{liminf}.

We are left to prove the inequality
\begin{equation}
\label{limsup}
\limsup_{p \to 1^+} \capa_p(\Omega) \leq \abs{\partial \Omega^*}.
\end{equation}
Let $E$ be any open and bounded set in $\R^n$ with smooth boundary such that $\Omega \subset E$. Define, for $x \in \R^n$, the function $d_E (x) = \text{dist}(x, E)$. 
Moreover, let us introduce 
a smooth cut-off function $\chi_\epsilon$ fulfilling
\begin{equation}
%\label{chi}
\begin{cases}
\,\,\chi_\epsilon(t)= 1 & \mbox{in} \,\, t < \epsilon,  
\\
\,\,-\frac{1}{\epsilon} <\dot{\chi}(t) < 0 & \mbox{in}\,\, \epsilon \leq t \leq {2}\epsilon
\\
\,\,\chi_\epsilon(t) =0 &\mbox{in} \,\, t > {2}\epsilon,
\end{cases}
\end{equation}
and let us set $\eta_\epsilon(x) = \chi_\epsilon (d_E (x))$. Choosing $\epsilon$ small enough, it is easily seen, by the regularity of $d_E$ in a neighbourhood of $E$ (see \cite[Lemma 14.6]{Gil_Tru_book}), that the function $\eta_\epsilon$ is an admissible competitor in \eqref{pcap-fin}  and \eqref{cap1-fin}. Then,
\[
\capa_p(\Omega) \leq \int\limits_{\R^n} \abs{\D \eta_\epsilon}^p \dd\mu
\]
for any $p\geq 1$.
Letting $p \to 1^+$, we get
\[
\limsup_{p \to 1^+} \capa_p(\Omega) \leq \int\limits_{\R^n} \abs{\D \eta_\epsilon} \dd\mu = \int\limits_\epsilon^{2\epsilon} \abs{\dot{\chi_\epsilon}(t)} \left\vert\{d_E = t\}\right\vert \dd t,
\]
where in the last equality we applied the coarea formula combined with the fact that $\abs{\D d_E} = 1$ in a neighbourhood of $E$.
By the Mean Value Theorem, there exist $r_\epsilon \in (\epsilon, 2\epsilon)$ such that the above right hand side satisfies
\[
\int\limits_\epsilon^{2\epsilon} \abs{\dot{\chi_\epsilon}(t)} \left\vert\{d_E = t\}\right\vert \dd t = \epsilon \abs{\dot\chi_\epsilon(r_\epsilon)}\abs{\{d_E = r_\epsilon\}} <\abs{\{d_E = r_\epsilon\}},
\]
where the last inequality is due to the second condition in \eqref{chi}. Since,
as $r_\epsilon \to 0^+$, we clearly have
\[
\abs{\{d_E = r_\epsilon\}} \to \abs{\partial E},
\]
we conclude that
\[
\limsup_{p \to 1^+} \capa_p(\Omega) \leq \abs{\partial E}
\]
for any bounded open set $E$ with smooth boundary containing $\Omega$.
In particular, considering a sequence of bounded sets $\{\Omega_k\}_{k \in \N}$ with smooth boundary containing $\Omega^*$ and with $\abs{\partial \Omega_k} \to \abs{\partial \Omega^*}$ as $k \to \infty$, provided in Lemma \ref{appr}, we get \eqref{limsup}.
The inequalities \eqref{1-ineq}, \eqref{liminf} and \eqref{limsup} combine as
\[
\abs{\partial \Omega^*} \leq \capa_1 (\Omega) \leq \liminf_{p \to 1^+} \capa_p (\Omega) \leq \limsup_{p \to 1^+} \capa_p (\Omega) \leq \abs{\partial \Omega^*},
\]
completing the proof.
\end{proof}

%%%%%%%%%%%%%%%%%%%%%%%%%%%%%%%%%%%%%%%%%%%%%%%%%%

\subsection{Proof of Theorem~\ref{minkowski} and  Corollay~\ref{meanconvexki}}

%%%%%%%%%%%%%%%%%%%%%%%%%%%%%%%%%%%%%%%%%%%%%%%%%%

We are now in the position to prove  Theorem~\ref{minkowski} together with its Corollay~\ref{meanconvexki}.
\begin{proof}[Proof of Theorem \ref{minkowski}]
It suffices to pass to the limit as $p \to 1^+$ in \eqref{preminkf}, that is
\begin{equation}
\label{preminkf-pf}
{\rm C}_p (\Omega)^{\frac{n-p-1}{n-p}}  \leq \,  \frac{1}{\,  \abs{\Sf^{n-1}} \,}\int\limits_{\partial \Omega} \left\vert \frac{\HH}{n-1}\right\vert^{p}\! \dd\sigma  \, .
\end{equation}
Indeed, recalling the relation between $p$-capacity and normalised $p$-capacity given in Definition \ref{pcap},
Theorem \ref{limit-pcapth} shows that the left hand side of the above inequality behaves as  
\[
\lim_{p \to 1^+} {\rm C}_p(\Omega)^{\frac{n-p-1}{n-p}} = \left(\frac{\abs{\partial \Omega^*}}{\abs{\Sf^{n-1}}}\right)^{\frac{n-2}{n-1}},
\]
while the right-hand side of \eqref{preminkf-pf} is immediately seen to converge to the right hand side of \eqref{minkowskif}.
Spheres show the optimality of the estimate since their mean curvature is given by $(n-1)/R$, where $R$ is the radius of the ball they enclose.
\end{proof}

\begin{proof}[Proof Corollary \ref{meanconvexki}]
Inequality \eqref{meanconvexkif} immediately follows from the fact that outward minimising sets with smooth boundary satisfy $\abs{\partial \Omega^*} = \abs{\partial \Omega}$ and the mean curvature of their boundaries is nonnegative (see Remark \ref{alt-def}).

We are left to consider the equality case in \eqref{meanconvexkif} for some strictly outward minimising set with smooth and strictly mean-convex boundary. To this aim, let $\{\partial \Omega_t\}_{t \in [0, T)}$ be the evolution of $\pa \Om$ under smooth IMCF, up to some $T > 0$. %with initial datum $\partial \Omega$.
By~\cite[Lemma 2.4]{Hui_Ilm}, the weak IMCF starting at $\partial \Omega$ 
coincides with the smooth flow $\{\Omega_t\}_{t \in [0, T^*)}$,
for some  $0<T^*\leq T$. In particular, by~\cite[Lemma 1.4]{Hui_Ilm}, $\Omega_t$ is strictly outward minimising and strictly mean-convex for every $t \in [0, T^*)$, and then~\eqref{meanconvexkif} holds for every $\partial\Omega_t$ with $t \in [0, T^*)$.
We can then define, for $t \in [0, T^*)$, the monotonic quantity already discussed in Subsection~\ref{smuteuic}, namely
\begin{equation}
\label{q-function}
 \mathcal{Q} (t) \, = \, 
|\pa\Om_t|^{-\frac{n-2}{n-1}} \!\int\limits_{\pa\Om_t} \!\HH \,\dd \sigma\, .
%\qquad \hbox{with} \,\,\,\, \Sigma_0 = \pa \Omega.
\end{equation}
Observe that inequality~\eqref{meanconvexkif} is equivalent to $\mathcal{Q}(0) \geq (n-1) \abs{\Sf^{n-1}}^{1/(n-1)}$, and assuming equality in \eqref{meanconvexkif} is equivalent to $\mathcal{Q}(0) = (n-1)\abs{\Sf^{n-1}}^{1/(n-1)}$. 
By the smoothness of the flow, the function $\mathcal{Q}(t)$ is differentiable for $t \in [0, T)$, and then a straightforward computation involving the standard evolution equations provided e.g. in \cite[Theorem 3.2]{Huis_Pold} show that
\begin{equation}
\label{q-der}
\mathcal{Q}'(0) = - \abs{\partial \Omega}^{-\frac{n-2}{n-1}} \int\limits_{\partial \Omega} \frac{\abs{\mathring{\hh}}^2}\HH \leq 0 \,.
\end{equation}
However, since we assumed 
$\mathcal{Q}(0) = (n-1)\abs{\Sf^{n-1}}^{1/(n-1)}$, 
the strict inequality $\mathcal{Q}'(0) < 0$ would imply 
$\mathcal{Q}(t) < (n-1)\abs{\Sf^{n-1}}^{1/(n-1)}$ 
for some $t \in (0, T^*)$, which is equivalent to contradict \eqref{meanconvexkif} for some outward minimising $\Omega_t$ with strictly mean-convex boundary. Then $\mathcal{Q}'(0) = 0$ and in turn, by formula \eqref{q-der}, 
$\partial \Omega$ is totally umbilical. 
Therefore, $\partial \Omega$ must be a sphere.
\end{proof}

\section{Optimal nearly umbilical estimates for outward minimising sets}
\label{sec:cor}

This section is devoted to the proof of an optimal version of the well celebrated De Lellis-M\"uller nearly umbilical estimates for outward minimising domains.

\begin{theorem}[Optimal Nearly Umbilical Estimate]
\label{delellis-muller}
If $\Omega\subset \R^3$ is a bounded outward minimising open domain with 
smooth boundary, then 
\begin{equation}
\label{delellis-mullerf}
\int\limits_{\partial \Omega}\left| \, \hh\,  - \,  \frac{\overline{\HH}}{2} \, g_{\partial\Omega} \, \right|^2
\!\!{\rm d}\sigma 
\,\,\leq\,\, 
2 \int\limits_{\partial \Omega}
\big\vert\mathring{\hh}\big\vert^2 
\,{\rm d}\sigma \, ,
\end{equation}
where $g_{\partial\Omega}$ is the metric induced on $\partial \Omega$ by the Euclidean metric of $\R^3$, and
\begin{equation}
\label{Hmedia}
\overline{\HH} 
\,=\, 
\fint\limits_{\partial \Omega} \HH\dd\sigma \, ,
\qquad\qquad
\mathring{\hh} 
\,=\,
\hh - \frac{{\HH}}{2}g_{\partial\Omega} \, .
\end{equation}
Moreover, the equality is achieved in~\eqref{delellis-mullerf} by some {\em strictly mean-convex} and {\em strictly outward minimising} domain $\Omega$ if and only if $\Omega$ is isometric to a round ball.
\end{theorem}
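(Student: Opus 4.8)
The plan is to derive~\eqref{delellis-mullerf} as an essentially algebraic consequence of the three-dimensional Minkowski Inequality for outward minimising sets (Corollary~\ref{meanconvexki} with $n=3$), combined with the Gauss--Bonnet Theorem, and to read off the rigidity from the one already established in that corollary.

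First I would record the pointwise identities valid on the surface $(\partial\Omega,g_{\partial\Omega})$. Since $\partial\Omega$ is $2$-dimensional, writing $\mathring{\hh}=\hh-\tfrac{\HH}{2}g_{\partial\Omega}$ one has $|\mathring{\hh}|^2=|\hh|^2-\tfrac12\HH^2$, while the Gauss equation in $\R^3$ gives $K=\tfrac12\big(\HH^2-|\hh|^2\big)=\tfrac14\HH^2-\tfrac12|\mathring{\hh}|^2$ for the Gauss curvature $K$. Integrating over $\partial\Omega$ and invoking Gauss--Bonnet -- using that, $\Omega$ being an outward minimising domain, $\partial\Omega$ is a connected closed surface embedded in $\R^3$, so that $\int_{\partial\Omega}K\,\dd\sigma=2\pi\chi(\partial\Omega)\le 4\pi$ -- one gets
\[
\tfrac14\int_{\partial\Omega}\HH^2\,\dd\sigma\;\le\;4\pi+\tfrac12\int_{\partial\Omega}|\mathring{\hh}|^2\,\dd\sigma\,.
\]
On the other hand, expanding the left-hand side of~\eqref{delellis-mullerf} and using $\int_{\partial\Omega}\HH\,\dd\sigma=\overline{\HH}\,|\partial\Omega|$ (the defining property of $\overline{\HH}$), one finds
\[
\int_{\partial\Omega}\Big|\hh-\tfrac{\overline{\HH}}{2}g_{\partial\Omega}\Big|^2\dd\sigma=\int_{\partial\Omega}|\hh|^2\,\dd\sigma-\tfrac{\overline{\HH}^2}{2}\,|\partial\Omega|=\int_{\partial\Omega}|\mathring{\hh}|^2\,\dd\sigma+\tfrac12\int_{\partial\Omega}\HH^2\,\dd\sigma-\tfrac{\overline{\HH}^2}{2}\,|\partial\Omega|\,.
\]
Plugging the Gauss--Bonnet bound into this identity, the estimate~\eqref{delellis-mullerf} reduces to the single scalar inequality $\overline{\HH}^2\,|\partial\Omega|\ge 16\pi$, equivalently $\big(\int_{\partial\Omega}\HH\,\dd\sigma\big)^2\ge 16\pi\,|\partial\Omega|$.

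This last inequality is precisely Corollary~\ref{meanconvexki} for $n=3$: since $|\Sf^{2}|=4\pi$, that result reads $\big(|\partial\Omega|/4\pi\big)^{1/2}\le(8\pi)^{-1}\int_{\partial\Omega}\HH\,\dd\sigma$, and squaring it (legitimate because $\HH\ge 0$ on the boundary of an outward minimising set) gives exactly $\big(\int_{\partial\Omega}\HH\,\dd\sigma\big)^2\ge 16\pi\,|\partial\Omega|$. This completes the proof of~\eqref{delellis-mullerf}. For the equality case, I would note that when $\partial\Omega$ is a topological sphere the Gauss--Bonnet step above is an \emph{equality}, so that equality in~\eqref{delellis-mullerf} forces $\overline{\HH}^2|\partial\Omega|=16\pi$, i.e. equality in the Minkowski Inequality; under the standing assumptions that $\Omega$ is strictly mean-convex and strictly outward minimising, the rigidity part of Corollary~\ref{meanconvexki} then gives that $\Omega$ is a round ball. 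Conversely, on a round ball both sides of~\eqref{delellis-mullerf} vanish, so equality holds there.

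The step deserving the most care -- more a bookkeeping matter than a true obstacle -- is the interaction between the topological input and the equality analysis. One must use that $\partial\Omega$ of an outward minimising domain is a single connected closed surface, hence $\chi(\partial\Omega)\le 2$ (a disjoint union of spheres of distinct radii would instead violate~\eqref{delellis-mullerf}); and in the rigidity discussion one must verify that, when $\partial\Omega$ is not a topological sphere, the resulting strict inequality in Gauss--Bonnet makes~\eqref{delellis-mullerf} strict as well, so that equality can only occur when $\partial\Omega$ is spherical and forces equality in the Minkowski Inequality. Apart from this, the argument contains no hard analytic step: it is a repackaging of the three-dimensional Minkowski Inequality.
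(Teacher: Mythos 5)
Your proof of the inequality \eqref{delellis-mullerf} is correct and follows the same route as the paper's: expand the square, use $|\mathring{\hh}|^2 = |\hh|^2 - \tfrac{1}{2}\HH^2$ and Gauss' equation to trade $\int_{\partial\Omega}(\HH^2 - |\hh|^2)\,\dd\sigma$ for $4\pi\chi(\partial\Omega)$ via Gauss--Bonnet, and then invoke the three-dimensional Minkowski Inequality for outward minimising sets together with $\chi(\partial\Omega)\le 2$. (You are slightly more careful than the paper in flagging that the last step uses connectedness of $\partial\Omega$; the paper simply asserts ``obviously $\chi(\partial\Omega)\le 2$''.)

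Where you genuinely diverge from the paper is in the rigidity discussion, and your route is in fact cleaner. The paper proves rigidity anew by evolving $\partial\Omega$ under smooth/weak IMCF, introducing the Perez quantity $\mathcal{P}(t)$, showing $\mathcal{P}'(0)\le 0$ with equality forcing $\mathring{\hh}\equiv 0$, and arguing that $\mathcal{P}'(0)<0$ would violate \eqref{delellis-mullerf} for a nearby $\Omega_t$. You instead observe that the chain of reductions is tight: equality in \eqref{delellis-mullerf} is, via Gauss--Bonnet, \emph{equivalent} to $\overline{\HH}^2|\partial\Omega| = 8\pi\chi(\partial\Omega)$, which combined with $\overline{\HH}^2|\partial\Omega|\ge 16\pi$ from Minkowski and $\chi(\partial\Omega)\le 2$ forces $\chi(\partial\Omega)=2$ and equality in the Minkowski Inequality; the rigidity part of Corollary~\ref{meanconvexki} (whose hypotheses -- strict mean-convexity and strict outward minimising -- match exactly the ones in the present statement) then yields that $\Omega$ is a round ball. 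This avoids re-running an IMCF monotonicity argument and instead re-uses the rigidity already proved one level up. The trade-off is that the paper's IMCF proof of rigidity is logically independent of the rigidity in Corollary~\ref{meanconvexki} (and closer to the De Lellis--Müller circle of ideas via the Perez quantity), whereas yours leans on Corollary~\ref{meanconvexki}'s rigidity -- which is perfectly legitimate here since that corollary is established earlier in the paper.
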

A first main tool we are going to use in order to deduce Theorem \ref{delellis-muller} from Theorem \ref{minkowski} is the classical Gauss' equation for surfaces in $\R^3$, yielding
\begin{equation}
\label{gauss}
\RR_{\partial \Omega} = \HH^2 - \abs{\hh^2},
\end{equation}
where ${\rm R}_{\partial \Omega}$ is the scalar curvature of $\partial \Omega$ computed with respect to the metric $g_{\partial \Omega}$ induced on it by the Euclidean metric of $\R^3$.
A second main tool we need to recall is the famous Gauss-Bonnet formula, stating that
\begin{equation}
\label{gauss-bonnet}
\int\limits_{\partial \Omega} \RR_{\partial \Omega}\dd\sigma = 4\pi \chi({\partial \Omega}),
\end{equation}
where $\chi({\partial \Omega})$ is the Euler characteristic of the surface $\partial \Omega$.

We are finally going to show how the Minkowski inequality \eqref{minkowski} combined with these basic identities in differential geometry give the optimal nearly umbilical estimate \eqref{delellis-mullerf}.
\begin{proof}[Proof of Theorem \ref{delellis-muller}]
Expanding the squares, it is straightforwardly seen that \eqref{delellis-mullerf} is equivalent to 
\begin{equation}
\label{passagio-delellimuller}
\int\limits_{\partial \Omega}\Big(\abs{\HH}^2 - \abs{\hh}^2\Big)\dd\sigma \leq \frac{\overline{\HH}^2}{2}\abs{\partial \Omega}.
\end{equation}
Invoking Gauss' equation \eqref{gauss} and Gauss-Bonnet formula \eqref{gauss-bonnet}, we then obtain that \eqref{delellis-mullerf} is equivalent to 
\begin{equation}
\label{gaussin}
8\pi\chi(\partial \Omega) = 2\int\limits_{\partial \Omega} \RR_{\partial \Omega}\dd\sigma = 2\int\limits_{\partial \Omega}\Big(\abs{\HH}^2 - \abs{\hh}^2\Big)\dd\sigma \leq {\overline{\HH}^2}\abs{\partial \Omega},
\end{equation}
that is, to
\begin{equation}
\label{gaussin1}
\sqrt{{2\pi\chi(\partial \Omega)}\abs{\partial \Omega}}\leq \int\limits_{\partial \Omega} \frac{\HH}{2}\dd\sigma.
\end{equation}
Since obviously $\chi(\partial \Omega)\leq 2$, the inequality \eqref{gaussin1} follows from the Minkowski inequality \eqref{meanconvexkif}.

Assume now that equality holds for some outward minimising set $\Omega$ with smooth and strictly mean-convex boundary. Let $\{\partial \Omega_t\}_{t \in [0, T)}$ be evolving by IMCF with initial datum $\partial \Omega$. 
By \cite[Lemma 2.4]{Hui_Ilm}, the weak IMCF $\{E_t\}_{t \in [0, \infty)}$ starting at $\partial \Omega$ coincides with $\Omega_t$ for $t \in [0, T^*)$, for some $T^*$ possibly smaller than $T$. In particular, $\Omega_t$  is outward minimising and strictly mean-convex for any $t \in [0, T^*)$, for some $T^* > 0$, and then \eqref{delellis-mullerf} holds for $\partial\Omega_t$ for any $t \in [0, T^*)$.
We can then define, for $T \in [0, T^*)$,  the quantity
\begin{equation}
\label{perez-quantity}
\mathcal{P}(t) = \int\limits_{\partial \Omega_t}\abs{\mathring{\hh}}^2 \dd\sigma - \frac{1}{2}\int\limits_{\partial \Omega_t}\left(\HH - \frac{1}{\abs{\partial \Omega_t}}\int_{\partial \Omega_t}\HH \right)^2 \dd\sigma,
\end{equation}
introduced in  \cite[Chapter 3]{perez-thesis}.
Observe that inequality \eqref{delellis-mullerf} is equivalent to $\mathcal P(0) \geq 0$, assuming equality in \eqref{delellis-mullerf} is equivalent to $\mathcal{P}(0) = 0$. 
By the smoothness of the flow, the function $\mathcal{P}(t)$ is differentiable for $t \in [0, T)$, and then \cite[Lemma 3.4]{perez-thesis} yields
\begin{equation}
\label{perez-derivative}
\mathcal{P}'(0) = - \overline{\HH}\int_{\partial \Omega} \frac{\abs{\mathring{\hh}}^2}{\HH} \dd\sigma \leq 0
\end{equation}
However, since we assumed $\mathcal{P}(0) = 0$, $\mathcal{P}'(0) < 0$ would imply $\mathcal{P}(t) < 0$ for some $t \in (0, T^*)$ that is equivalent to falsify \eqref{delellis-mullerf} for some outward minimising $\Omega_t$ with strictly mean-convex boundary. Then $\mathcal{P}'(0) = 0$, and by formula \eqref{perez-derivative} this means that $\partial \Omega$ is totally umbilical, thus a sphere.
\end{proof}
Inequality \eqref{gaussin1} in the above proof, that we just showed to be equivalent to the nearly umbilical estimate \eqref{delellis-mullerf}, actually coincides with the Minkowski inequality for mean-convex hypersurfaces \eqref{meanconvexkif} if $\chi(\partial \Omega) = 2$, that is, if $\partial \Omega$ is diffeomorphic to a sphere. We want to show, with the following easy proposition, that such a diffeomorphism exists each time the right-hand side of \eqref{delellis-mullerf} is smaller than $16\pi$. 

\begin{proposition}
\label{lemma_8pi}
Let $\Omega\subset \R^3$ be a bounded open set with smooth and 
\emph{mean-convex} boundary. If
\begin{equation}
\label{boundh0}
\int\limits_{\partial \Omega} \left\vert \hh - \frac{{\HH}}{2}g_{\partial\Omega}\right\vert^2 \dd\sigma \leq 8\pi
\end{equation}
then $\partial \Omega$ is diffeomorphic to a sphere. In particular, if \eqref{boundh0} holds, then the Minkowski inequality is equivalent to the optimal nearly umbilical estimate.
\end{proposition}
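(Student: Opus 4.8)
The plan is to prove that, under the stated hypotheses, $\partial\Omega$ is a \emph{connected} closed orientable surface of \emph{genus zero}, which is exactly the assertion that it be diffeomorphic to a sphere; the concluding sentence of the proposition will then be nothing more than a re-reading of the proof of Theorem~\ref{delellis-muller}. Recall first that $\partial\Omega$, being the boundary of a region in $\R^3$, is automatically a compact orientable surface, hence a disjoint union of finitely many closed orientable surfaces of well-defined genera.

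The first (and, I expect, the only delicate) step is to show that $\partial\Omega$ is connected. To this end I would first check that $\R^3\setminus\overline\Omega$ is connected, i.e. that $\Omega$ has no bounded ``holes''. Suppose $V$ is a bounded connected component of $\R^3\setminus\overline\Omega$. Then $\partial V\subseteq\partial\Omega$, and along $\partial V$ the outward unit normal of $\Omega$ coincides with the \emph{inward} unit normal of $V$; hence mean-convexity of $\partial\Omega$ forces the mean curvature of $\partial V$, computed with respect to the outward normal of $V$, to be nonpositive at every point of $\partial V$. On the other hand, at a point $y_0\in\overline V$ realising $\max_{x\in\overline V}\abs{x}$, the surface $\partial V$ touches the circumscribed sphere $\partial B(0,\abs{y_0})$ from inside, and the touching principle forces the mean curvature of $\partial V$ at $y_0$ (again with respect to the outward normal of $V$) to be at least $(n-1)/\abs{y_0}>0$, a contradiction. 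Thus $\R^3\setminus\overline\Omega$ is connected, and since $\Omega$ is connected as well, Alexander duality in $\Sf^3$ yields $H^2(\partial\Omega;\Z)\cong\Z$, whence $\partial\Omega$ is connected.

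Next I would pin down the genus. Let $g$ be the genus of the connected surface $\partial\Omega$, so $\chi(\partial\Omega)=2-2g$. Using the pointwise identity $\abs{\mathring{\hh}}^2=\abs{\hh}^2-\tfrac12\HH^2$, valid on surfaces, together with the Gauss equation~\eqref{gauss} and the Gauss--Bonnet formula~\eqref{gauss-bonnet}, one obtains
\begin{equation*}
8\pi\,\chi(\partial\Omega)\,=\,\int\limits_{\partial\Omega}\HH^2\,\dd\sigma\,-\,2\int\limits_{\partial\Omega}\abs{\mathring{\hh}}^2\,\dd\sigma\,.
\end{equation*}
Now I would invoke the classical Willmore inequality $\int_{\partial\Omega}\HH^2\,\dd\sigma\geq 16\pi$, with equality only for round spheres. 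If $g\geq1$, then $\chi(\partial\Omega)\leq0$, hence $\int_{\partial\Omega}\HH^2\,\dd\sigma\leq 2\int_{\partial\Omega}\abs{\mathring{\hh}}^2\,\dd\sigma\leq 16\pi$ by~\eqref{boundh0}; combined with Willmore's inequality this forces equality throughout, so $\partial\Omega$ would be a round sphere, contradicting $g\geq1$. Therefore $g=0$ and $\partial\Omega$ is diffeomorphic to $\Sf^2$.

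Finally, for the ``in particular'' assertion: once $\chi(\partial\Omega)=2$, inequality~\eqref{gaussin1}, which in the proof of Theorem~\ref{delellis-muller} was shown to be equivalent to the nearly umbilical estimate~\eqref{delellis-mullerf}, becomes literally the Minkowski inequality~\eqref{meanconvexkif} for $\partial\Omega$, which is the stated equivalence. I expect the genuinely delicate point to be the topological Step~1: the ``no holes'' comparison, although short, must be carried out with scrupulous attention to the sign conventions for $\HH$ (what mean-convexity really buys here is the absence of holes, not a pointwise curvature bound), and the deduction of connectedness of $\partial\Omega$ from connectedness of $\Omega$ and of $\R^3\setminus\overline\Omega$ relies on standard but non-elementary algebraic topology.
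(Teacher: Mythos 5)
Your core argument (Willmore inequality, Gauss equation, Gauss--Bonnet, and the Willmore rigidity case to rule out $\chi(\partial\Omega)=0$) is correct and is essentially the same as the paper's, merely phrased as a contradiction starting from $g\geq1$ rather than as a direct derivation of $\chi(\partial\Omega)\geq0$. What you add, and what the paper's proof leaves entirely unaddressed, is the connectedness of $\partial\Omega$. The paper passes from $\chi(\partial\Omega)\geq0$ directly to $\chi(\partial\Omega)\in\{0,2\}$, which is valid only for a connected surface; as stated, the argument would not exclude, say, a disjoint union of two spheres. Your preliminary step — showing $\R^3\setminus\overline{\Omega}$ has no bounded component via the mean-convexity and a touching-sphere comparison at the farthest point, and then invoking Alexander duality — is sound and does fill this gap. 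One small caveat: your Alexander-duality step also needs $\Omega$ itself to be connected, which the proposition does not hypothesize; this is presumably an implicit standing assumption (indeed, with $\Omega$ a disjoint union of two balls the hypothesis~\eqref{boundh0} holds while $\partial\Omega$ is not a sphere, so the proposition is false without it), but it is worth stating explicitly since your argument genuinely uses it. In short: same key inequalities as the paper, plus a topological preliminary that the paper omits; the latter is a real improvement in rigor, modulo making the connectedness hypothesis on $\Omega$ explicit.
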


\begin{proof}
If \eqref{boundh0} holds, we have, by the classical Willmore inequality \cite{Willmore}
\[
\int\limits_{\partial \Omega} \left\vert \hh - \frac{{\HH}}{2}g_{\partial\Omega}\right\vert^2 \dd\sigma \leq 8\pi \leq \frac{1}{2}\int\limits \HH^2 \dd\sigma,
\]
that implies
\begin{equation}
\label{willaplied}
\int\limits_{\partial \Omega}\Big(\abs{\HH}^2 - \abs{\hh}^2\Big)\dd\sigma \geq 0.
\end{equation}
Moreover, since equality is attained in the Willmore inequality if and only if $\partial \Omega$ is isometric to a sphere with the round metric, the same rigidity statement holds if equality is attained in \eqref{willaplied}. On the other hand, applying the Gauss' equation \eqref{gauss} and the Gauss-Bonnet formula \eqref{gauss-bonnet}, \eqref{willaplied} is equivalent to
\begin{equation}
\label{gaussbonnet-applied}
\chi(\partial \Omega) \geq 0.
\end{equation}
If $\chi(\partial \Omega)=0$ then by the characterization of the equality case in the Willmore inequality $\partial \Omega$ would be even isometric to a sphere, and this is a contradiction. Then $\chi(\partial \Omega)=2$, as claimed.
\end{proof}

%%%%%%%%%%%%%%%%%%%%%%%%%%%%%%%%%%%%%%%%%%%%%%%
%%%%%%%%%%%%%%%%%%%%%%%%%%%%%%%%%%%%%%%%%%%%%%%

\subsection*{Acknowledgements}
\emph{The authors are grateful to G.~De Philippis, N.~Fusco, G.~Huisken, A.~Malchiodi, C.~Mantegazza, L.~Mari, J.~Metzger,  J.~Scheuer and F.~Schulze for their interest in the present work, as well as for useful 
comments and discussions during the preparation of the manuscript. 
A special thank goes to L.~Benatti and F.~Oronzio for
the careful reading of this paper.
The authors are members of Gruppo Nazionale per l'Analisi Matematica, 
la Probabilit\`a e le loro Applicazioni (GNAMPA), which is part of the 
Istituto Nazionale di Alta Matematica (INdAM),
and they are partially funded by the GNAMPA project 
``Aspetti geometrici in teoria del potenziale 
lineare e nonlineare''.
}

%%%%%%%%%%%%%%%%%%%%%%%%%%%%%%%%%%%%%%%%%%%%%%%
%%%%%%%%%%%%%%%%%%%%%%%%%%%%%%%%%%%%%%%%%%%%%%%


\begin{thebibliography}{10}

\bibitem{Ago_Fog_Maz}
V.~Agostiniani, M.~Fogagnolo, and L.~Mazzieri.
\newblock Sharp geometric inequalities for closed hypersurfaces in manifolds
  with nonnegative ricci curvature.
\newblock {\em Inventiones mathematicae}, Jul 2020.

\bibitem{Ago_Maz_1}
V.~Agostiniani and L.~Mazzieri.
\newblock Riemannian aspects of potential theory.
\newblock {\em J. Math. Pures Appl.}, 104(3):561 -- 586, 2015.

\bibitem{Ago_Maz_2}
V.~Agostiniani and L.~Mazzieri.
\newblock On the geometry of the level sets of bounded static potentials.
\newblock {\em Commun. Math. Phys.}, 355:261 -- 301, 2017.

\bibitem{Ago_Maz_3}
Virginia Agostiniani and Lorenzo Mazzieri.
\newblock Monotonicity formulas in potential theory.
\newblock {\em Calculus of Variations and Partial Differential Equations},
  59(1):6, Nov 2019.

\bibitem{Alb_Bia_Cri}
G.~Alberti, S.~Bianchini, and G.~Crippa.
\newblock Structure of level sets and {S}ard-type properties of {L}ipschitz
  maps.
\newblock {\em Annali della Scuola Normale Superiore di Pisa. Classe di
  Scienze. Serie V}, 4, 2011.

\bibitem{Ale_1937}
A.~D. Aleksandrov.
\newblock {Z}ur {T}heorie der gemischten {V}olumina von {k}onvexen
  {K}{\"o}rpern, {II}. {N}eue {U}ngleichungen zwischen den gemischten
  {V}olumina und ihre {A}nwendungen (in {R}ussian).
\newblock {\em Mat. Sbornik N.S.}, 2:1205--1238, 1937.

\bibitem{Ale_1938}
A.~D. Aleksandrov.
\newblock {Z}ur {T}heorie der gemischten {V}olumina von konvexen {K}{\"o}rpern,
  {III}. {D}ie {E}rweiterung zweier {L}ehrs{\"a}tze {M}inkowskis {\"u}ber die
  konvexen {P}olyeder auf beleibige konvexe {F}l{\"a}chen (in {R}ussian).
\newblock {\em Mat. Sbornik N.S.}, 3:27--46, 1938.

\bibitem{Bra_Mia}
H.~L. Bray and P.~Miao.
\newblock On the capacity of surfaces in manifolds with nonnegative scalar
  curvature.
\newblock {\em Inventiones mathematicae}, 172(3):459--475, 2008.

\bibitem{Bra_Nev}
H.~L. Bray and A.~Neves.
\newblock Classification of prime $3$-manifolds with $\sigma$-invariant greater
  than {$\mathbb R\mathbb P^3$}.
\newblock {\em Annals of Mathematics}, 159(1):407--424, 2004.

\bibitem{Brendle}
S.~Brendle, P.-K. Hung, and M.-T. Wang.
\newblock A {M}inkowski inequality for hypersurfaces in the {A}nti-de
  {S}itter-{S}chwarzschild manifold.
\newblock {\em Communications on Pure and Applied Mathematics}, 69(1):124--144,
  2016.

\bibitem{caraballo}
D.~G. Caraballo.
\newblock Local simplicity, topology, and sets of finite perimeter.
\newblock {\em Interfaces Free Bound.}, 13(2):171--189, 2011.

\bibitem{Cha_Mor_Nov_Pon}
A.~Chambolle, M.~Morini, M.~Novaga, and M.~Ponsiglione.
\newblock Existence and uniqueness for anisotropic and crystalline mean
  curvature flows.
\newblock {\em J. Amer. Math. Soc.}, 32(3):779--824, 2019.

\bibitem{Cha_Mor_Pon}
A.~Chambolle, M.~Morini, and M.~Ponsiglione.
\newblock Nonlocal curvature flows.
\newblock {\em Arch. Ration. Mech. Anal.}, 218(3):1263--1329, 2015.

\bibitem{Chang_Wang_2011}
S.-Y.~A. Chang and Y.~Wang.
\newblock On {A}leksandrov-{F}enchel {I}nequalities for k-{C}onvex {D}omains.
\newblock {\em Milan Journal of Mathematics}, 79(1):13, Aug 2011.

\bibitem{Chang_Wang_2013}
S.-Y.~A. Chang and Y.~Wang.
\newblock Some higher order isoperimetric inequalities via the method of
  optimal transport.
\newblock {\em International Mathematics Research Notices}, 2014, 05 2013.

\bibitem{Colding_1}
T.~H. Colding.
\newblock New monotonicity formulas for {R}icci curvature and applications.
  {I}.
\newblock {\em Acta Mathematica}, 209(2):229--263, 2012.

\bibitem{Colding_Minicozzi_2}
T.~H. Colding and W.~P. Minicozzi.
\newblock Monotonicity and its analytic and geometric implications.
\newblock {\em Proceedings of the National Academy of Sciences},
  110(48):19233--19236, 2013.

\bibitem{Colding_Minicozzi}
T.~H. Colding and W.~P. Minicozzi.
\newblock Ricci curvature and monotonicity for harmonic functions.
\newblock {\em Calculus of Variations and Partial Differential Equations},
  49(3):1045--1059, 2014.

\bibitem{colesanti-salani}
A.~Colesanti and P.~Salani.
\newblock The {B}runn-{M}inkowski inequality for {$p$}-capacity of convex
  bodies.
\newblock {\em Math. Ann.}, 327(3):459--479, 2003.

\bibitem{DeLellis_Mueller}
C.~De~Lellis and S.~M\"{u}ller.
\newblock Optimal rigidity estimates for nearly umbilical surfaces.
\newblock {\em J. Differential Geom.}, 69(1):75--110, 2005.

\bibitem{deLima-hyperbolic}
L.~L. de~Lima and F.~Gir{\~a}o.
\newblock An {A}lexandrov--{F}enchel-type {I}nequality in {H}yperbolic {S}pace
  with an application to a {P}enrose {I}nequality.
\newblock {\em Annales Henri Poincar{\'e}}, 17(4):979--1002, Apr 2016.

\bibitem{dibenedetto-p-regularity}
E.~DiBenedetto.
\newblock {$C^{1+\alpha }$} local regularity of weak solutions of degenerate
  elliptic equations.
\newblock {\em Nonlinear Anal.}, 7(8):827--850, 1983.

\bibitem{evans-p-regularity}
L.~C. Evans.
\newblock A new proof of local {$C^{1,\alpha }$} regularity for solutions of
  certain degenerate elliptic p.d.e.
\newblock {\em J. Differential Equations}, 45(3):356--373, 1982.

\bibitem{fenchel}
W.~Fenchel.
\newblock {\"U}ber {K}r\"ummung und {W}indung geschlossener {R}aumkurven.
\newblock {\em Math. Ann.}, 101:238--252, 1929.

\bibitem{fogagnolo-mazzieri-hulls}
M.~Fogagnolo and L.~Mazzieri.
\newblock Minimising hulls, p-capacity and isoperimetric inequality on complete
  riemannian manifolds.
\newblock arXiv:2012.09490.

\bibitem{Fog_Maz_Pin}
M.~Fogagnolo, L.~Mazzieri, and A.~Pinamonti.
\newblock Geometric aspects of p-capacitary potentials.
\newblock {\em Ann. Inst. H. Poincar\'{e} Anal. Non Lin\'{e}aire},
  36(4):1151--1179, 2019.

\bibitem{freire}
A.~Freire and F.~Schwartz.
\newblock Mass-capacity inequalities for conformally flat manifolds with
  boundary.
\newblock {\em Communications in Partial Differential Equations},
  39(1):98--119, 2014.

\bibitem{Ga_Sa}
N.~Garofalo and E.~Sartori.
\newblock Symmetry in exterior boundary value problems for quasilinear elliptic
  equations via blow-up and a priori estimates.
\newblock {\em Adv. Differential Equations}, 4(2):137--161, 1999.

\bibitem{ge-hyperbolic1}
Y.~Ge, G.~Wang, and J.~Wu.
\newblock Hyperbolic {A}lexandrov-{F}enchel quermassintegral inequalities i.
\newblock {\em Journal of Differential Geometry}, 98, 03 2013.

\bibitem{ge-hyperbolic2}
Y.~Ge, G.~Wang, and J.~Wu.
\newblock Hyperbolic {A}lexandrov-{F}enchel quermassintegral inequalities ii.
\newblock {\em J. Differential Geom.}, 98(2):237--260, 09 2014.

\bibitem{ge-kottler}
Y.~Ge, G.~Wang, J.~Wu, and C.~Xia.
\newblock A {P}enrose inequality for graphs over {K}ottler space.
\newblock {\em Calculus of Variations and Partial Differential Equations},
  52(3):755--782, Mar 2015.

\bibitem{gerhardt}
C.~Gerhardt.
\newblock Flow of nonconvex hypersurfaces into spheres.
\newblock {\em J. Differential Geom.}, 32(1):299--314, 1990.

\bibitem{Gil_Tru_book}
D.~Gilbarg and N.~S. Trudinger.
\newblock {\em Elliptic partial differential equations of second order}.
\newblock Classics in Mathematics. Springer-Verlag, Berlin, 2001.
\newblock Reprint of the 1998 edition.

\bibitem{Guan_Li}
P.~Guan and J.~Li.
\newblock The quermassintegral inequalities for k-convex starshaped domains.
\newblock {\em Advances in Mathematics}, 221(5):1725 -- 1732, 2009.

\bibitem{Hui_video}
G.~Huisken.
\newblock An isoperimetric concept for the mass in general relativity.
\newblock Video, available at https://video.ias.edu/node/234.

\bibitem{Hui_Ilm}
G.~Huisken and T.~Ilmanen.
\newblock The {I}nverse {M}ean {C}urvature {F}low and the {R}iemannian
  {P}enrose {I}nequality.
\newblock {\em J. Differential Geom.}, 59(3):353--437, 11 2001.

\bibitem{huisken-ilm_higher}
G.~Huisken and T.~Ilmanen.
\newblock Higher regularity of the {I}nverse {M}ean {C}urvature {F}low.
\newblock {\em J. Differential Geom.}, 80(3):433--451, 11 2008.

\bibitem{Huis_Pold}
G.~Huisken and A.~Polden.
\newblock Geometric evolution equations for hypersurfaces.
\newblock In {\em Calculus of variations and geometric evolution problems
  ({C}etraro, 1996)}, volume 1713 of {\em Lecture Notes in Math.}, pages
  45--84. Springer, Berlin, 1999.

\bibitem{kikhenassamy}
S.~Kichenassamy and L.~V\'{e}ron.
\newblock Singular solutions of the {$p$}-{L}aplace equation.
\newblock {\em Math. Ann.}, 275(4):599--615, 1986.

\bibitem{kotschwar}
B.~Kotschwar and L.~Ni.
\newblock Local gradient estimates of p-harmonic functions, 1/{H}-flow, and an
  entropy formula.
\newblock {\em Ann. Sci. \'{E}c. Norm. Sup\'{e}r. (4)}, 42(1):1--36, 2009.

\bibitem{lady}
O.~A. Ladyzhenskaya and N.~N. Ural'tseva.
\newblock {\em Linear and quasilinear elliptic equations}.
\newblock Translated from the Russian by Scripta Technica, Inc. Translation
  editor: Leon Ehrenpreis. Academic Press, New York-London, 1968.

\bibitem{Lee_Nev}
A.~D. Lee and A.~Neves.
\newblock The {P}enrose {I}nequality for {A}symptotically {L}ocally
  {H}yperbolic {S}paces with {N}onpositive {M}ass.
\newblock {\em Commun. Math. Phys.}, 327:339--352, 2015.

\bibitem{lewis-reg}
J.~L. Lewis.
\newblock Regularity of the derivatives of solutions to certain degenerate
  elliptic equations.
\newblock {\em Indiana Univ. Math. J.}, 32(6):849--858, 1983.

\bibitem{wei_kottler}
H.~Li and Y.~Wei.
\newblock On inverse mean curvature flow in {S}chwarzschild space and {K}ottler
  space.
\newblock {\em Calc. Var. Partial Differential Equations}, 56(3):Art. 62, 21,
  2017.

\bibitem{Lie}
G.~M. Lieberman.
\newblock Boundary regularity for solutions of degenerate elliptic equations.
\newblock {\em Nonlinear Anal.}, 12(11):1203--1219, 1988.

\bibitem{lindqvist}
P.~Lindqvist.
\newblock {\em Notes on the p-{L}aplace equation}.
\newblock University of Jyv\"askyl\"a, second edition, 2017.

\bibitem{maggi}
F.~Maggi.
\newblock {\em Sets of Finite Perimeter and Geometric Variational Problems: An
  Introduction to Geometric Measure Theory}.
\newblock Cambridge Studies in Advanced Mathematics. Cambridge University
  Press, 2012.

\bibitem{Mak_She}
M.~Makowski and J.~Scheuer.
\newblock Rigidity results, inverse curvature flows and {A}lexandrov-{F}enchel
  type inequalities in the sphere.
\newblock {\em Asian Journal of Mathematics}, 20:869--892, 2016.

\bibitem{mari-rigoli-setti}
L.~Mari, L.~Rigoli, and A.~G. Setti.
\newblock On the 1/h-flow by p-{L}aplace approximation: new estimates via fake
  distances under {R}icci lower bounds.
\newblock arXiv:1905.00216.

\bibitem{mccormick}
S.~McCormick.
\newblock On a {M}inkowski-like inequality for asymptotically flat static
  manifolds.
\newblock {\em Proceedings of the American Mathematical Society}, 146, 09 2017.

\bibitem{Min}
H.~Minkowski.
\newblock Volumen und {O}berfl{\"a}che.
\newblock {\em Mathematische Annalen}, 57(4):447--495, Dec 1903.

\bibitem{moser-jems}
R.~Moser.
\newblock The inverse mean curvature flow and p-harmonic functions.
\newblock {\em J. Eur. Math. Soc. (JEMS)}, 9(1):77--83, 2007.

\bibitem{perez-thesis}
D.~R. Perez.
\newblock {\em On nearly umbilical hypersurfaces}.
\newblock PhD thesis, University of Zurich, 2011.

\bibitem{poggesi}
G.~Poggesi.
\newblock Radial symmetry for p-harmonic functions in exterior and punctured
  domains.
\newblock {\em Applicable Analysis}, 0(0):1--14, 2018.

\bibitem{qiu}
G.~Qiu.
\newblock A family of higher-order isoperimetric inequalities.
\newblock {\em Communications in Contemporary Mathematics}, 17(03):1450015,
  2015.

\bibitem{schmidt}
T.~Schmidt.
\newblock Strict interior approximation of sets of finite perimeter and
  functions of bounded variation.
\newblock {\em Proc. Amer. Math. Soc.}, 143(5):2069--2084, 2015.

\bibitem{sternberg-williams}
P.~Sternberg, W.~P. Ziemer, and G.~Williams.
\newblock {$C^{1,1}$}-regularity of constrained area minimizing hypersurfaces.
\newblock {\em Journal of Differential Equations}, 94(1):83 -- 94, 1991.

\bibitem{talenti}
G.~Talenti.
\newblock Best constant in {S}obolev inequality.
\newblock {\em Ann. Mat. Pura Appl. (4)}, 110:353--372, 1976.

\bibitem{tolksdorf}
P.~Tolksdorf.
\newblock On the {D}irichlet problem for quasilinear equations in domains with
  conical boundary points.
\newblock {\em Comm. Partial Differential Equations}, 8(7):773--817, 1983.

\bibitem{Tru}
N.~S. Trudinger.
\newblock Isoperimetric inequalities for quermassintegrals.
\newblock {\em Annales de l'I.H.P. Analyse non lin\'eaire}, 11(4):411--425,
  1994.

\bibitem{uraltseva}
N.~N. Ural'tseva.
\newblock Degenerate quasilinear elliptic systems.
\newblock {\em Zap. Nau\v{c}n. Sem. Leningrad. Otdel. Mat. Inst. Steklov.
  (LOMI)}, 7:184--222, 1968.

\bibitem{urbas}
J.~I.~E. Urbas.
\newblock On the expansion of starshaped hypersurfaces by symmetric functions
  of their principal curvatures.
\newblock {\em Mathematische Zeitschrift}, 205(3):355--372, 1990.

\bibitem{Wei}
Y.~Wei.
\newblock On the {M}inkowski-type inequality for outward minimizing
  hypersurfaces in {S}chwarzschild space.
\newblock {\em Calculus of Variations and Partial Differential Equations},
  57(2):46, Mar 2018.

\bibitem{Willmore}
T.~J. Willmore.
\newblock Mean curvature of immersed surfaces.
\newblock {\em An. \c Sti. Univ. ``All. I. Cuza'' Ia\c si Sec\c t. I a Mat.
  (N.S.)}, 14:99--103, 1968.

\bibitem{Xiao}
J.~Xiao.
\newblock P-capacity vs surface-area.
\newblock {\em Advances in Mathematics}, 308:1318 -- 1336, 2017.

\bibitem{xu}
X.~Xu.
\newblock Some results on functional capacity and their applications to
  p-{L}aplacian problems involving measure data.
\newblock {\em Nonlinear Analysis: Theory, Methods \& Applications}, 27(1):17
  -- 36, 1996.

\end{thebibliography}
\end{document}